\newcommand{\N}{\mathbf{N}}
\newcommand{\R}{\mathbf{R}}
\newcommand{\T}{\mathbb{T}}
\newcommand{\ens}[2]{\left\{\, #1\ |\  #2\, \right\}}
 \newcommand{\p}[1]{\partial_{#1}}
 \renewcommand{\d}{\mathrm{d}}
 \newcommand{\ps}[2]{\langle #1,#2\rangle}
\newcommand{\Ps}[2]{\bigl\langle #1,#2\bigr\rangle}
\newcommand{\af}{\alpha_f}
\renewcommand{\bf}{\beta_f}
 \newcommand{\ap}{\alpha_p}
\newcommand{\bp}{\beta_p}
\newcommand{\Hbc}{\mathcal{H}^1_{\mathrm{bc}}}
\newcommand{\tHbc}{\tilde{\mathcal{H}}^1_{\mathrm{bc}}}
\DeclareMathOperator{\Id}{I}
\DeclareMathOperator{\ran}{Ran}
\DeclareMathOperator{\Ker}{Ker}
\newcolumntype{L}[1]{>{\raggedright\arraybackslash}p{#1}}
\newtheorem{thm}{Theorem}[section]
\newtheorem{lemma}[thm]{Lemma}
\newtheorem{proposition}[thm]{Proposition}
\newtheorem{e-definition}[thm]{Definition}
\newtheorem{remark}[thm]{\textbf Remark\/}
\numberwithin{equation}{section}
\newcommand{\@syst}[1]{%
\[ 
\left\{
\begin{aligned}
#1
\end{aligned}
\right.
\]
}
\newenvironment{syst}{\collect@body\@syst}{\global\@ignoretrue}
\newcommand{\@systn}[1]{%
\begin{equation}
\left\{
\begin{aligned}
#1
\end{aligned}
\right.
\end{equation}
}
\newenvironment{systn}{\collect@body\@systn}{\global\@ignoretrue}
\title{latexdiff Example - \DIFdelbegin \DIFdel{Draft }\DIFdelend \DIFaddbegin \DIFadd{Revised }\DIFaddend version}
\definecolor{RED}{rgb}{1,0,0}\definecolor{BLUE}{rgb}{0,0,1} 
\providecommand{\DIFadd}[1]{{\protect\color{blue}\uwave{#1}}} 
\providecommand{\DIFdel}[1]{{\protect\color{red}\sout{#1}}}                      
\providecommand{\DIFaddbegin}{} 
\providecommand{\DIFaddend}{} 
\providecommand{\DIFdelbegin}{} 
\providecommand{\DIFdelend}{} 
\newcolumntype{L}[1]{>{\raggedright\arraybackslash}p{#1}}
\title{Analysis and Output Tracking Design for the Direct Contact Membrane Distillation Parabolic System.}
\author[1]{Mohamed Ghattassi}
\author[1]{Taous-Meriem Laleg}
\author[2]{Jean-Claude Vivalda}
\affil[1]{CEMSE Division, King Abdullah University of Science and Technology (KAUST), Thuwal, 23955-6900, Kingdom of Saudi Arabia, {\sf mohamed.ghattassi@kaust.edu.sa} {\sf taousmeriem.laleg@kaust.edu.sa}}
\affil[2]{  Inria, Villers-l\`es-Nancy, F-54600, France-- Universit\'e de Lorraine, IECL, UMR 7502, Vand\oe uvre-l\`es-Nancy, F-54506, France, {\sf Jean-Claude.Vivalda@inria.fr}}
\date{}
\begin{document}

\maketitle
\abstract{
This paper considers the  performance output tracking for a boundary controlled Direct Contact Membrane Distillation (DCMD) system. First, the mathematical properties of a recently developed mathematical  model of the DCMD system are discussed. This model consists of parabolic equations  coupled at the boundary. Then, the existence and uniqueness of  the solutions are analyzed, using the theory of operators. Some regularity results of the solution are also established. A particular case showing the diagonal property of the principal operator is studied. Then, based on one-side feedback law the control problem, which consists of  tracking both the feed and permeate outlet temperatures of the membrane distillation system is formulated. A servomechanism and an output feedback controller are proposed to solve the control problem. In addition,  an extended state observer aimed at estimating both the system state and disturbance, based on the temperature measurements of the inlet is proposed.  Thus, by some regularity for the reference signal and when the disturbance vanishes, we prove the exponential decay of the output tracking error. Moreover, we show the performance of the control strategy in presence of the flux noise.
\vspace{0.5cm}

\noindent {\textbf Keywords: Direct Contact Membrane Distillation System, Well-Posedness Criteria, disturbance rejection, disturbance rejection control.} 

\section{Introduction}

The access to drinking water is getting more and more challenging as a result of the limited natural freshwater resources. On the other hand, the demand for the potable water is increasing, due to the rapid population growth  and  the effects of climate change. As a result, many countries rely on desalination respond to their demand in potable water. Indeed, desalination has been recognized as one of the most promising methods  to reduce water shortage in arid regions through the production of fresh water from seawater and saline groundwater.  Among conventional  water desalination technologies is membrane distillation (MD), which has   great potential for  sustainable high quality water supply. It consists of a separation process driven by temperature gradient, where hot salt water is circulated in one side (the feed side) of a hydrophobic porous membrane, while cold-fresh stream is circulated in the other side (the permeate side),  thus creating a difference of  pressure between the two sides of the membrane that constitutes the main driving force of the process. There are different configurations for  MD systems such as Direct Contact Membrane Distillation (DCMD), Air Gap Membrane Distillation (AGMD), Sweeping Gas Membrane Distillation (SGMD) and Vacuum Membrane Distillation (VMD). More details on the MD technology and its configurations are provided in\cite{khayet_book,lee2017total,el2006framework}. In the past few years, many studies have been conducted  by engineers to propose accurate mathematical models of MD systems and to develop efficient model-based control and monitoring strategies \cite{karam2017analysis, shim2015solar, naidu2017transport}.  An  accurate mathematical model can help  optimize  the system and increase its efficiency.  Among the proposed MD models we will focus on a system of two-dimensional advection diffusion equations coupled at the boundary;  this dynamical model has been proposed for the DCMD configuration and  has been validated experimentally in \cite{eleiwi2016dynamic}. Moreover, based on external observation, Boumenir et al.\cite{boumenir2019monitoring}, propose a method to recover the temperature of the membrane, which is considered as an unknown source term in a parabolic system.

The aim of this paper is to study the mathematical properties of the parabolic system modeling the DCMD   and  to design a controller to track the output temperature of this parabolic system based on some boundary measurements. Our mathematical analysis is performed in the framework of  semigroup theory. Using some classical arguments in the analysis of partial differential equations, we show that the model operator is   m-dissipative. In particular, we show that this operator is diagonal
in the co-current configuration, if some additional conditions on thermal conductivity
and flow rate are satisfied.

Additionally, we demonstrate that, for  any initial conditions, the solution of the system tends to an equilibrium as  time tends to infinity. It is worth noting that systems of advection-diffusion equations  represent an important class of PDEs that  arise in many problems of science and engineering. In this context, there exist some papers that have been devoted to the study of the reaction-advection-diffusion systems for linear and nonlinear cases, see \cite{cieslak2010finite, mizoguchi2014nondegeneracy,corrias2004global}. In these studies, the authors  devote particular attention to the the well-posedness and the blow-up of the solution for a class of nonlinear reaction-advection-diffusion system with internal coupling. 

In this work, for the control design,  we develop  an output feedback  strategy  to track the feed and permeate outlet temperatures of the 2D parabolic system for arbitrary reference signal, using some boundary measurements.  There are several control strategies for  parabolic systems  in the multidimensional case, for instance, the  backstepping method which has been proposed in  \cite{meurer2012control, meurer2009trajectory}. However,  most existing approaches requires the operator to be self-adjoint. But  the operator of the studied parabolic system  is self-adjoint only for some particular values of thermal conductivity and flow rate coefficients. This is why, we propose  to use  the active disturbance rejection control (ADRC), which only needs an analytic semigroup operator and allows the handling of external disturbances. 

In this paper, we introduce an extension of the ADRC method for the  output temperature  tracking of the system of the parabolic equation, weakly coupled at the boundary and subject to external disturbances. We  propose a one-side  feedback law  to track a desired outlet temperature of the  DCMD system. To this end, the main idea is to provide a useful estimation of the disturbance, that is incorporated in the control law to allow an efficient decoupling of the actual disturbance. This  topic has been well-documented for finite dimensional systems, see  for example,  \cite{francis1977linear, desoer1985tracking}. There are also other studies  that investigate the ADRC for PDEs. For example, an ADRC controller has been derived for the wave equation \cite{ guo2016performance, guo2017adaptive}; other studies explore applying the ADRC to parabolic equations \cite{feng2017new,jin2018performance}. In \cite{ feng2017new}, ADRC is combined to the  backstepping method to stabilize the unstable one dimensional heat equation with an external disturbance and boundary uncertainty. In \cite{jin2018performance}, ARDC  is  considered for the boundary output tracking for a one dimensional heat equation with external disturbance. In that paper, the authors propose a design of an observer to estimate the disturbance and a servo  system consisting of an output feedback boundary control law that has been derived. The output tracking design result in this work can be treated as an extension from 1D heat equation considered in \cite{jin2018performance} to 2D parabolic equations weakly coupled at the boundary and as an application for the heat transfer in DCMD system.

This paper is organized as follows: Section~\ref{section1} describes  a  mathematical  model for the heat transfer in DCMD systems. The proof of the existence and uniqueness for the solution of the DCMD elliptic system is established in section~\ref{sec-exist-unic}; we formulate the problem using the framework of operator theory and show that the operator related to the DCMD system is m-dissipative in Appendix ~\ref{deft-op-A}. In section~\ref{cocurentsection} the  co-current DCMD case is presented  and it is shown that  under some additional conditions  the operator is diagonal. In section~\ref{section6}, the output-tracking problem for the DCMD system is formulated, and a control solution based on the ADRC technique is proposed. A numerical example is presented in section~\ref{sectionnumeric} to demonstrate the effectiveness of the proposed control. Finally, in section~\ref{section7}, we conclude by discussing open questions and future studies.

\section{Mathematical modeling of heat transfer in DCMD process}\label{section1}
The model geometry consists of a feed inlet boundary $\mathrm{B}_1$, feed outlet boundary $\mathrm{B}_3$,
 permeate inlet boundary $\mathrm{B}_4$, permeate outlet boundary $\mathrm{B}_6$. In this module, the vapor generated in the feed solution (warm sea water) is forced to pass through the membrane dry pores to the permeate side (cold water), following thermodynamics rules. Hereafter, we outline the equations describing the evolutions of the temperatures in the feed and permeate rooms of the devices: more details can be found in~\cite{eleiwi2016dynamic} or~\cite{shim2015solar}.

  We  denote by $f(t,x,y)$ the temperature of the warm water and by $p(t,x,y)$ the temperature of the cold water at time $t$ and at the point of coordinates $(x,y)$; we denote also by  $ \Omega_f$ and $ \Omega_p$ the rectangles
   $[0,\ell]\times[0,L]$ and $[ \delta_m+\ell,\delta_m+2\ell] \times[0,L]$ respectively  (here $ \delta_m$ denotes the membrane thickness, see Fig.~\ref{dessin}). 

\begin{figure}[!h]
\definecolor{cqcqcq}{rgb}{0.75,0.75,0.75}
\definecolor{qqqqff}{rgb}{0,0,1}
\definecolor{ffqqqq}{rgb}{1,0,0}
\centering
\subfigure[Counter-current presentation]{
\begin{tikzpicture}[line cap=round,line join=round,>=triangle 45,x=1.0cm,y=1.0cm]
\clip(-0.68,-3.76) rectangle (8.58,4.82);
\fill[color=ffqqqq,fill=ffqqqq,fill opacity=0.15] (1.2,3.1) -- (3.2,3.1) -- (3.2,-2.22) -- (1.2,-2.22) -- cycle;
\fill[color=qqqqff,fill=qqqqff,fill opacity=0.15] (4,3.1) -- (6,3.1) -- (6,-2.22) -- (4,-2.22) -- cycle;
\fill[color=cqcqcq,fill=cqcqcq,fill opacity=0.4] (3.2,3.1) -- (4,3.1) -- (4,-2.22) -- (3.2,-2.22) -- cycle;
\draw (1.2,3.1)-- (1.2,-2.22);
\draw (3.2,3.1)-- (3.2,-2.22);
\draw (4,3.1)-- (4,-2.22);
\draw (6,3.1)-- (6,-2.22);
\draw (4,3.1)-- (6,3.1);
\draw (4,-2.22)-- (6,-2.22);
\draw (1.2,3.1)-- (3.2,3.1);
\draw (1.2,-2.22)-- (3.2,-2.22);
\draw (3.2,-2.22)-- (4,-2.22);
\draw (3.2,3.1)-- (4,3.1);
\draw [->,line width=1.2pt] (2.2,-1.08) -- (2.2,1.82);
\draw [->,line width=1.2pt] (5,1.82) -- (5,-1.08);
\draw (1.2,-3)-- (3.2,-3);
\draw (3.2,-3)-- (4,-3);
\draw (4,-3)-- (6,-3);
\draw (1.2,-3.1)-- (1.2,-2.9);
\draw (3.2,-2.9)-- (3.2,-3.1);
\draw (4,-3.1)-- (4,-2.9);
\draw (6,-3.1)-- (6,-2.9);
\draw [->,line width=0.4pt,dash pattern=on 2pt off 2pt] (6,-3) -- (8,-3);
\draw (0.4,3.1)-- (0.4,-2.22);
\draw (0.3,3.1)-- (0.5,3.1);
\draw (0.3,-2.22)-- (0.5,-2.22);
\draw [->,line width=0.4pt,dash pattern=on 2pt off 2pt] (0.4,3.1) -- (0.4,4.6);
\draw (2.02,-2.96) node[anchor=north west] {$\ell$};
\draw (3.39,-2.96) node[anchor=north west] {$ \delta_m$};
\draw (4.76,-2.96) node[anchor=north west] {$\ell$};
\draw (7.88,-2.96) node[anchor=north west] {$x$};
\draw (-0.12,1.38) node[anchor=north west] {$L$};
\draw (-0.12,4.88) node[anchor=north west] {$y$};
\draw (2.02,-2.2) node[anchor=north west] {$\mathrm{B}_1$};
\draw (0.55,0.76) node[anchor=north west] {$\mathrm{B}_2$};
\draw (2.02,3.8) node[anchor=north west] {$\mathrm{B}_3$};
\draw (4.78,3.8) node[anchor=north west] {$\mathrm{B}_4$};
\draw (6.04,0.8) node[anchor=north west] {$\mathrm{B}_5$};
\draw (4.74,-2.2) node[anchor=north west] {$\mathrm{B}_6$};
\draw (3.3,1.34) node[anchor=north west] {$\rotatebox{90.0}{ \text{ MEMBRANE }  }$};
\draw (2.7,0.8) node[anchor=north west] {$\mathrm{I_f}$};
\draw (4.08,0.8) node[anchor=north west] {$\mathrm{I_p}$};
\end{tikzpicture}
}
\hfill
\subfigure[Co-current presentation]{
\begin{tikzpicture}[line cap=round,line join=round,>=triangle 45,x=1.0cm,y=1.0cm]
\clip(-0.68,-3.76) rectangle (8.58,4.82);
\fill[color=ffqqqq,fill=ffqqqq,fill opacity=0.15] (1.2,3.1) -- (3.2,3.1) -- (3.2,-2.22) -- (1.2,-2.22) -- cycle;
\fill[color=qqqqff,fill=qqqqff,fill opacity=0.15] (4,3.1) -- (6,3.1) -- (6,-2.22) -- (4,-2.22) -- cycle;
\fill[color=cqcqcq,fill=cqcqcq,fill opacity=0.4] (3.2,3.1) -- (4,3.1) -- (4,-2.22) -- (3.2,-2.22) -- cycle;
\draw (1.2,3.1)-- (1.2,-2.22);
\draw (3.2,3.1)-- (3.2,-2.22);
\draw (4,3.1)-- (4,-2.22);
\draw (6,3.1)-- (6,-2.22);
\draw (4,3.1)-- (6,3.1);
\draw (4,-2.22)-- (6,-2.22);
\draw (1.2,3.1)-- (3.2,3.1);
\draw (1.2,-2.22)-- (3.2,-2.22);
\draw (3.2,-2.22)-- (4,-2.22);
\draw (3.2,3.1)-- (4,3.1);
\draw [->,line width=1.2pt] (2.2,-1.08) -- (2.2,1.82);
\draw [->,line width=1.2pt] (5,-1.22) -- (5,1.68);
\draw (1.2,-3)-- (3.2,-3);
\draw (3.2,-3)-- (4,-3);
\draw (4,-3)-- (6,-3);
\draw (1.2,-3.1)-- (1.2,-2.9);
\draw (3.2,-2.9)-- (3.2,-3.1);
\draw (4,-3.1)-- (4,-2.9);
\draw (6,-3.1)-- (6,-2.9);
\draw [->,line width=0.4pt,dash pattern=on 2pt off 2pt] (6,-3) -- (8,-3);
\draw (0.4,3.1)-- (0.4,-2.22);
\draw (0.3,3.1)-- (0.5,3.1);
\draw (0.3,-2.22)-- (0.5,-2.22);
\draw [->,line width=0.4pt,dash pattern=on 2pt off 2pt] (0.4,3.1) -- (0.4,4.6);
\draw (2.02,-2.96) node[anchor=north west] {$\ell$};
\draw (3.39,-2.96) node[anchor=north west] {$ \delta_m$};
\draw (4.76,-2.96) node[anchor=north west] {$\ell$};
\draw (7.88,-2.96) node[anchor=north west] {$x$};
\draw (-0.12,1.38) node[anchor=north west] {$L$};
\draw (-0.12,4.88) node[anchor=north west] {$y$};
\draw (2.02,-2.2) node[anchor=north west] {$\mathrm{B}_1$};
\draw (0.55,0.76) node[anchor=north west] {$\mathrm{B}_2$};
\draw (2.02,3.8) node[anchor=north west] {$\mathrm{B}_3$};
\draw (4.78,3.8) node[anchor=north west] {$\mathrm{B}_6$};
\draw (6.04,0.8) node[anchor=north west] {$\mathrm{B}_5$};
\draw (4.74,-2.2) node[anchor=north west] {$\mathrm{B}_4$};
\draw (3.3,1.34) node[anchor=north west] {$\rotatebox{90.0}{ \text{ MEMBRANE }  }$};
\draw (2.7,0.8) node[anchor=north west] {$\mathrm{I_f}$};
\draw (4.08,0.8) node[anchor=north west] {$\mathrm{I_p}$};
\end{tikzpicture}}
\caption{Schematic of the device}
 \label{dessin}
\end{figure}
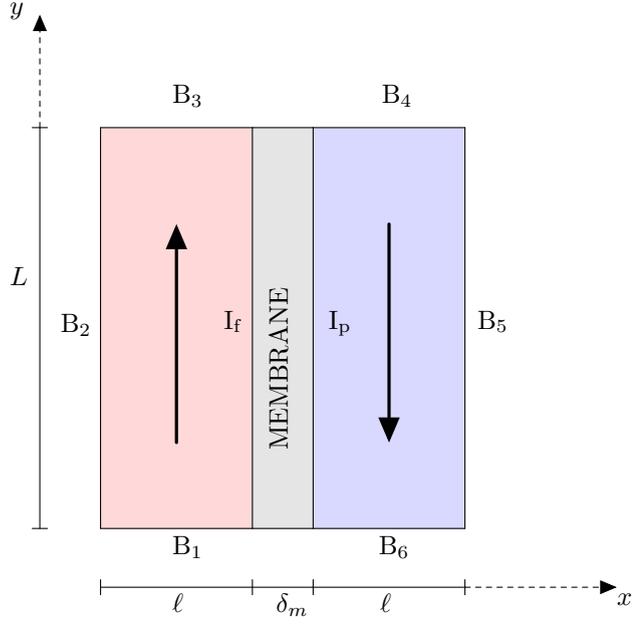
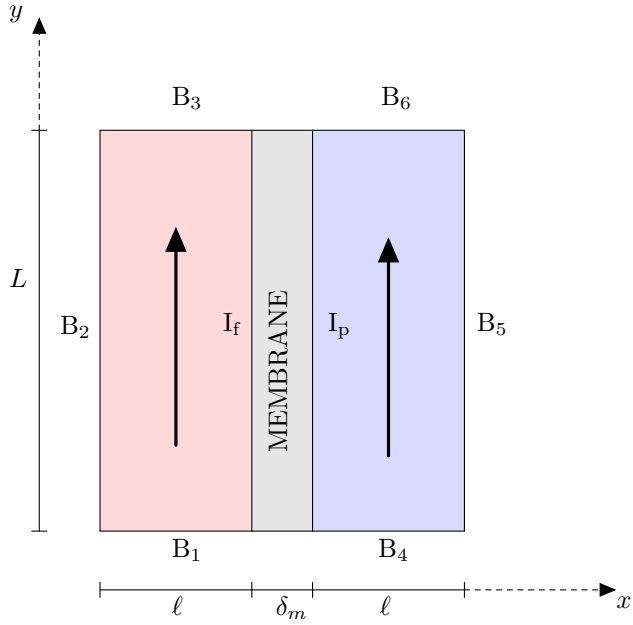

 \begin{figure}[ht]
\centering
{\includegraphics[scale=0.35]{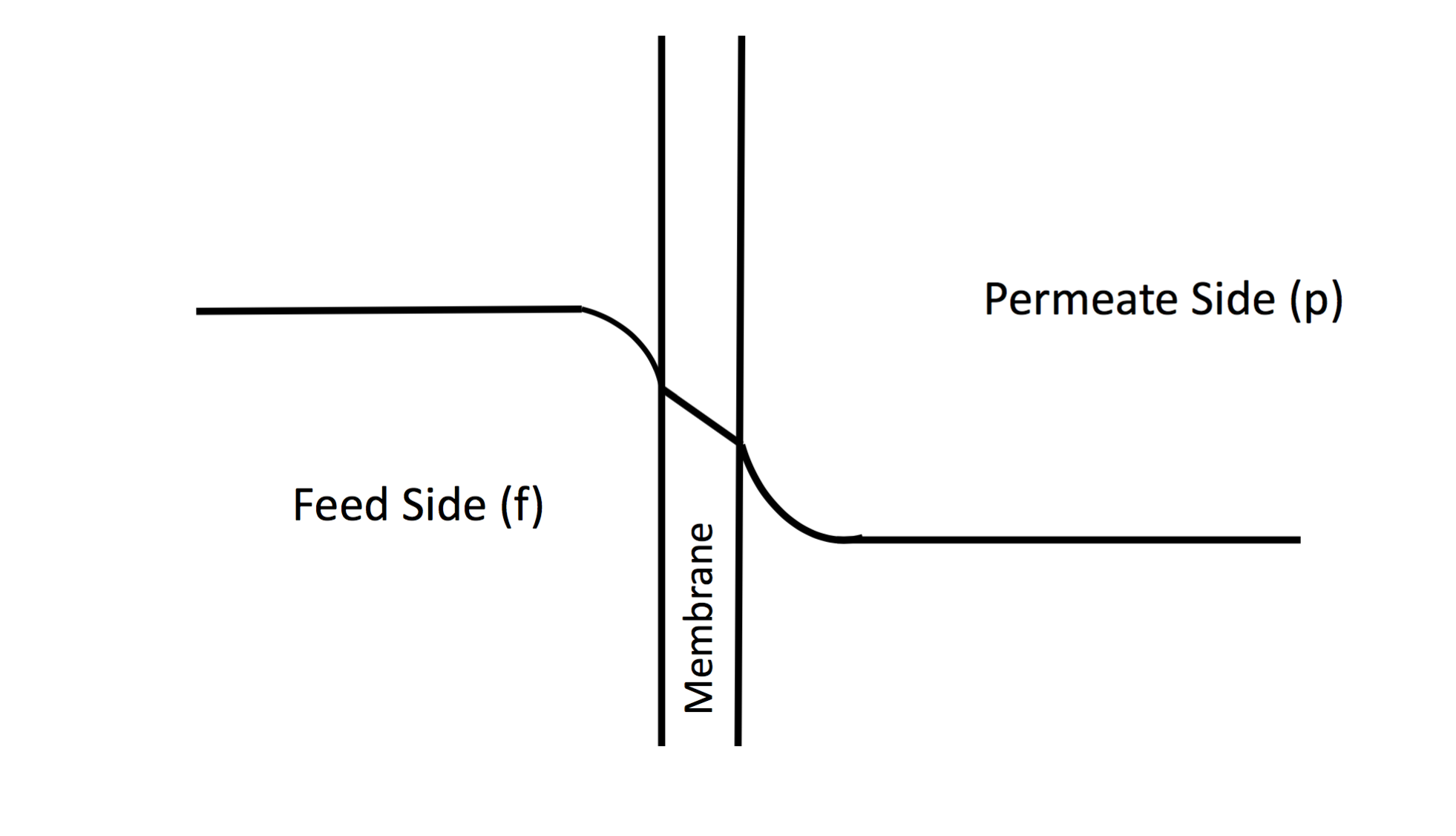}}
\caption{Temperature evolution from feed (f) to permeate (p) in DCMD} \label{flux}
\end{figure}
The mathematical model for the evolution of the temperatures in the device involves a diffusion and a convection terms. The equations write, see \cite{shim2015solar}
 \begin{align}
 \label{edp}
 \p t f(t,x,y) &= \af \Delta f(t,x,y) - \bf\,\p y f(t,x,y)\qquad\text{ for $t\ge0$ and $(x,y)\in \Omega_f$}\\
  \p t p(t,x,y) &= \ap \Delta p(t,x,y) - \bp\,\p y p(t,x,y)\qquad\text{for $t\ge0$ and $(x,y)\in \Omega_p$\,.}
 \end{align}
The coefficients $\af$, $\bf$  and $\ap$  are positive and assumed to be constant: they depend on the thermal conductivity and the densities of the fluids (see~\cite{shim2015solar} ); specifically they are defined as follows
\begin{align*}
\alpha_{f} &=\frac{\kappa_f}{\rho_f\,C_{f}}, & \alpha_{p} & = \frac{\kappa_p}{\rho_p\,C_{p}}\,.
\end{align*}
Here $\kappa_{k},\rho_{k}$ and $C_{k}$, ($k\in\{p,f\}$) denote respectively the thermal conductivity of fluid, liquid density of fluid and specific heat capacity of fluid. The coefficients $\beta_f > 0$ and  $\beta_p$ denote the velocities of the flow in the feed and permeate side respectively.   The velocity  $\bp$ in the permeate is negative in the counter-current case, Fig.~\ref{dessin} (a) and positive for the co-current presentation,  Fig.~\ref{dessin} (b). The boundary  conditions are a mix of Dirichlet, von Neumann and Robin conditions, they are:
 \paragraph{On the boundary $\mathrm{B_1}$}
 \begin{equation}
 \label{B1}
 f(t,x,0)=T_{f} \qquad \text{for every   $0\le x\le \ell$}
 \end{equation}
 
 \paragraph{On the boundary $\mathrm{B_2}$}
 \begin{equation}
 \label{B2}
\p x  f(t,0,y)=0 \qquad \text{for every $0\le y\le L$}
 \end{equation}
 
  \paragraph{On the boundary  $\mathrm{B_3}$}
 \begin{equation}
 \label{B3}
\p y  f(t,x,L)=0 \qquad \text{for every $0\le x\le \ell$}
 \end{equation}

\paragraph{On the boundary $\mathrm{B_4}$}
\begin{equation}\label{B4}
\begin{aligned}
 p(t,x, L)&=T_p\quad \text{for  every $\ell+ \delta_m \le x\le 2\ell + \delta_m$},\,\text{ Fig.~\ref{dessin}\ (a)}\\
 p(t,x, 0)&=T_p\quad \text{for  every $\ell+ \delta_m \le x\le 2\ell + \delta_m$}, \,\text{ Fig.~\ref{dessin}\ (b)}
 \end{aligned}
\end{equation}

\paragraph{On the boundary $\mathrm{B_5}$}
\begin{equation}
\label{B5}
\p x p(t, 2\ell + \delta_m,y)=0\qquad \text{for every $0\le y \le L$}
\end{equation}

  \paragraph{On the boundary $\mathrm{B_6}$}
 \begin{equation} \label{B6}
\begin{aligned}
\p y  p(t,x,0)&=0 \qquad \text{for every $\ell + \delta_m \le x\le 2\ell + \delta_m$}\,\text{ Fig.~\ref{dessin}\ (a)}\\
\p y  p(t,x,L)&=0 \qquad \text{for every $\ell + \delta_m \le x\le 2\ell + \delta_m$}, \,\text{Fig.~\ref{dessin}\ (b)}\\
 \end{aligned}
 \end{equation}
 
\paragraph{On the interfaces $\mathrm{I}_f$ ane $\mathrm{I}_p$}
\begin{align}
\label{If}
k_f \p x f(t,\ell,y)&=-\biggl(J \lambda + \frac{k_m}{ \delta_m}f(t,\ell,y)-\frac{k_m}{ \delta_m}p(t,\ell+ \delta_m,y)\biggr)\\
\label{Ip}
k_p \p x p(t,\ell,y)&=-( J \lambda + \frac{k_m}{ \delta_m}f(t,\ell,y)-\frac{k_m}{ \delta_m}p(t,\ell+ \delta_m,y))
\end{align}
   The surface temperature on the feed side of the membrane equals the feed temperature $f$ and the surface temperature on the permeate side of the membrane equals the bulk temperature $p$ of the condensing fluid. Nevertheless, the process is known to suffer from temperature polarization as depicted in figure \ref{flux} causing a decrease in permeate fluxes~\cite{alsaadi2014experimental}.

The term $J$ denotes the permeate flux through the membrane; the mass transport mechanism in the membrane pores is governed by three basic mechanisms known as: Knudsen diffusion, molecular diffusion and Poiseuille flow, \cite{naidu2017transport, shim2015solar} and~\cite[Chapter 10]{khayet_book}. The term $ \lambda$ is the latent heat of water: these terms depend on the temperature;   the product $J\, \lambda$ is  very small (about $10^{-6}$ ) and is neglected in the sequel. the terms $k_f$, $k_p$ and $k_m$ are  thermal conductivity coefficients. 
 
 As this model does not take into account the physical phenomena inside the  membrane, we shall rewrite it in such a way that the two unknown functions   
 $f$ and $p$ are defined on the same domain. To this end, one make the following change of unknown function: $\tilde p(t,x,y)=p(t,2\ell + \delta_m-x,y)$ where $(x,y)\in[0,\ell]\times [0,L]$.
 Hereafter, the partial differential equations as well as the boundary conditions are rewritten with the unknown functions $f$ and $\tilde p$. For the sake, of readability, we keep the notation $p(t,x,y)$ (instead of $\tilde p$), moreover, without loss of generality, we assume that $\ell=1$. The rest of the paper will examine the counter current case. However, the findings remain true for the co-current process.
The domain of definition of the PDE's is $\Omega :=(0,1)\times(0,L)$, the considered advection diffusion system writes 

\begin{systn}
\label{glv1.1}
& \partial_{t}f(t,x,y)-\alpha_{f}\Delta f(t,x,y)+\beta_{f}\partial_{y}f(t,x,y) =0, &t\ge0, (x,y)\in \Omega\\
& \partial_{t}p(t,x,y)-\alpha_{p}\Delta p(t,x,y)-\beta_{p}\partial_{y}p(t,x,y) =0 &t\ge0, (x,y)\in \Omega\\
&  f(t,x,0)=T_f & t\ge0, 0\le x\le 1,\\
  &\partial_{x} f(t,0,y) =0 & t\ge0, 0\le y \le L,\\
 & \partial_{y} f(t,x,L)  =0& t\ge0, 0\le x \le 1,\\
&   p(t,x,L) =T_{p} & t\ge0, 0\le x \le 1,\\
  & \partial_{x} p(t,0,y) =0 & t\ge0, 0\le y \le L,\\
&  \partial_{y} p(t,x,0) = 0 &t\ge0, 0\le x \le 1,\\
&   \partial_{x} f(t,1,y)  =-\gamma_f \left(f(t,1,y)-p(t,1,y)\right) & t\ge0, 0\le y \le L,\\
& \partial_{x} p(t,1,y) =\gamma_p \left(f(t,1,y)-p(t,1,y)\right) & t\ge0, 0\le y \le L,\\
& f(0,x,y) =f_{0}(x,y) & (x,y)\in \Omega,\\
&   p(0,x,y) =p_{0}(x,y) &(x,y)\in \Omega\,.
 \end{systn}
 Here the constants $ \gamma_f$ and $ \gamma_p$ are respectively equal to $k_m( \delta_mk_f)^{-1}$ and $k_m( \delta_mk_p)^{-1}$. $T_{f}, T_{p},f_{0}$ and $p_{0}$ are the initial data of the system.
 
 \begin{remark}
A simplification of the DCMD model  has been proposed  in~\cite{eleiwi2016dynamic} under appropriate  physical assumptions. Indeed,  the vertical thermal diffusivity for the  considered geometry has been neglected, the width $\ell $ being sufficiently small compared to the length $L$.  This assumption is based on the fact that the horizontal diffusivity is dominant. 
\begin{syst}
& \partial_{t}f(t,x,y)-\alpha_{f}\partial_{xx} f(t,x,y)+\beta_{f}\partial_{y}f(t,x,y) =0, &t\ge0, (x,y)\in \Omega\\
& \partial_{t}p(t,x,y)-\alpha_{p}\partial_{xx} p(t,x,y)-\beta_{p}\partial_{y}p(t,x,y) =0 &t\ge0, (x,y)\in \Omega\\
&  f(t,x,0)=T_f & t\ge0, 0\le x\le 1,\\
  &\partial_{x} f(t,0,y) =0 & t\ge0, 0\le y \le L,\\
&   p(t,x,L) =T_{p} & t\ge0, 0\le x \le 1,\\
  & \partial_{x} p(t,0,y) =0 & t\ge0, 0\le y \le L,\\
&   \partial_{x} f(t,1,y)  =-\gamma_f \left(f(t,1,y)-p(t,1,y)\right) & t\ge0, 0\le y \le L,\\
& \partial_{x} p(t,1,y) =\gamma_p \left(f(t,1,y)-p(t,1,y)\right) & t\ge0, 0\le y \le L,\\
& f(0,x,y) =f_{0}(x,y) & (x,y)\in \Omega,\\
&   p(0,x,y) =p_{0}(x,y) &(x,y)\in \Omega\,.
 \end{syst}
 \end{remark}


\section{Existence and uniqueness of the solution of system~\eqref{glv1.1}}\label{sec-exist-unic}
In order to ensure the existence of a solution to system~\eqref{glv1.1}, appropriate regularity assumptions on the initial datum are required. We next sharpen the regularity of this data.
\subsection{The operator $ {A}$ with inhomogeneous boundary conditions}
In this section, we consider the following systems of partial differential equations
\begin{systn}
\label{op-inhom}
& \alpha_{f}\Delta f(x,y)-\beta_{f}\partial_{y}f(x,y) =0, &(x,y)\in \Omega\\
&-\alpha_{p}\Delta p(x,y)+\beta_{p}\partial_{y}p(x,y) =0 & (x,y)\in \Omega\\
&  f(x,0)=T_f & 0\le x\le 1,\\
  &\partial_{x} f(0,y) =0 &  0\le y \le L,\\
 & \partial_{y} f(,x,L)  =0& 0\le x \le 1,\\
&   p(x,L) =T_{p} &  0\le x \le 1,\\
  & \partial_{x} p(0,y) =0 &  0\le y \le L,\\
&  \partial_{y} p(x,0) = 0 & 0\le x \le 1,\\
&   \partial_{x} f(1,y)  =-\gamma_f \bigl(f(1,y)-p(1,y)\bigr) &  0\le y \le L,\\
& \partial_{x} p(1,y) =\gamma_p \bigl(f(1,y)-p(1,y)\bigr) &  0\le y \le L\,.
 \end{systn}
If $T_f=T_p=0$, this system writes $ {A}(f,p)=0$ with $ {A}$ the operator defined above; in this case the unique solution of the 
system is $(f,p)\equiv(0,0)$ because   the m-dissipative operator $ {A}:\mathcal{D}( {A})\to {\bigl[L^2( \Omega)\bigr]}^2$ is 
invertible. In the general case, we shall use some tools from the theory of boundary control systems 
(see \emph{e.g.} \cite[Chap. 10]{TucWeiss}). In system~\eqref{op-inhom}, we shall regard $T_f$ and $T_p$ as boundary controls, 
and we introduce the following spaces and operators:
\begin{itemize}
\item
the solution space  $\mathbf{Z}$ is defined as those pairs $(f,p)\in H^2( \Omega)\times H^2(\Omega)$ satisfying the following homogeneous boundary conditions:
\begin{itemize}
\item
for every $0\le y\le L$, $\partial_{x} f(0,y) =\partial_{x} p(0,y)=0$\,;
\item
for every $0\le x\le1$, $\partial_{y} f(x,L)  = \partial_{y} p(x,0) = 0$\,;
\item
for every $0\le y\le L$, $\partial_{x} f(1,y)  =-\gamma_f \left(f(1,y)-p(1,y)\right)$\,, 
and $\partial_{x} p(1,y) =\gamma_p \left(f(1,y)-p(1,y)\right) $\,.
\end{itemize}
\item
the state space $\mathbf{X}$ is the space $L^2( \Omega)\times L^2( \Omega)$\,;
\item
the input space $\mathbf{U}$ is the space $L^2(0,1)\times L^2(0,1)$\,.
\end{itemize}
Notice that $\mathbf{Z}\subset \mathbf{X}$ with continuous embedding. We consider the operator $ {L}:\mathbf{Z}\to \mathbf{X}$ defined as
\begin{equation*}
 {L}(f,p) = \bigl(\alpha_f \Delta f- \beta_f\p yf, \alpha_p \Delta p+ \beta_p \p yp\bigr)
\end{equation*}
and the operator $ {G}: \mathbf{Z}\to \mathbf{U}$ defined as
\begin{equation*}
 {G}(f,p)= \bigl(f(\cdot,0),p(\cdot,L)\bigr)\,.
\end{equation*}
Operator $ {L}$ is obviously bounded, this is true also for operator $ {G}$
\begin{lemma}
The linear operator $G: \mathbf{Z}\to\mathbf{U}$ is bounded.
\end{lemma}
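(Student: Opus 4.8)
The plan is to recognize $G$ as a pair of trace operators and reduce the claim to the standard trace estimate on the rectangle $\Omega=(0,1)\times(0,L)$. Indeed $G(f,p)=\bigl(f(\cdot,0),p(\cdot,L)\bigr)$ records the trace of $f$ on the edge $\{y=0\}$ and the trace of $p$ on the edge $\{y=L\}$, each edge being identified with the interval $(0,1)$. Since $\mathbf{Z}$ carries the norm inherited from $H^2(\Omega)\times H^2(\Omega)$, it suffices to prove that for $u\in H^2(\Omega)$ the edge traces $u(\cdot,0)$ and $u(\cdot,L)$ lie in $L^2(0,1)$ with $\|u(\cdot,0)\|_{L^2(0,1)}+\|u(\cdot,L)\|_{L^2(0,1)}\le C\|u\|_{H^2(\Omega)}$; the two components of $G$ are then each bounded, and the bound for $G$ follows by adding squares.

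First I would establish the one-variable trace estimate by a density argument. For $u\in C^\infty(\overline{\Omega})$ and $(x,y)\in\Omega$ one has $u(x,0)^2=u(x,y)^2-2\int_0^y u(x,s)\,\partial_s u(x,s)\,\d s$, hence $u(x,0)^2\le u(x,y)^2+2\int_0^L |u(x,s)|\,|\partial_s u(x,s)|\,\d s$; averaging in $y$ over $(0,L)$, integrating in $x$ over $(0,1)$, and applying the Cauchy--Schwarz inequality gives $\|u(\cdot,0)\|_{L^2(0,1)}^2\le \tfrac1L\|u\|_{L^2(\Omega)}^2+2\|u\|_{L^2(\Omega)}\|\partial_y u\|_{L^2(\Omega)}\le C\|u\|_{H^1(\Omega)}^2$, and the same computation at $y=L$ gives the analogous bound for $u(\cdot,L)$. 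Since $C^\infty(\overline{\Omega})$ is dense in $H^2(\Omega)$ and $H^2(\Omega)\hookrightarrow H^1(\Omega)$, the maps $u\mapsto u(\cdot,0)$ and $u\mapsto u(\cdot,L)$ extend to bounded operators $H^2(\Omega)\to L^2(0,1)$, agreeing with pointwise restriction on the continuous representative supplied by Sobolev embedding. Alternatively, one simply cites the trace theorem: $\gamma_0\colon H^2(\Omega)\to H^{3/2}(\partial\Omega)$ is bounded, restriction to one edge is bounded into $H^{3/2}(0,1)$, and $H^{3/2}(0,1)\hookrightarrow L^2(0,1)$ continuously.

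Finally, for $(f,p)\in\mathbf{Z}$ one combines the two component estimates: $\|G(f,p)\|_{\mathbf{U}}^2=\|f(\cdot,0)\|_{L^2(0,1)}^2+\|p(\cdot,L)\|_{L^2(0,1)}^2\le C^2\bigl(\|f\|_{H^2(\Omega)}^2+\|p\|_{H^2(\Omega)}^2\bigr)=C^2\|(f,p)\|_{\mathbf{Z}}^2$, which is the asserted continuity. There is no genuine obstacle here; the only points worth care are that $\mathbf{Z}$ is normed by the ambient $H^2\times H^2$ norm (so the estimate closes) and that the homogeneous boundary conditions defining $\mathbf{Z}$ are irrelevant to the proof — they only make $\mathbf{Z}$ a closed subspace on which $L$ and $G$ are subsequently used. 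If a sharper statement is wanted, the same argument shows $G$ in fact maps $\mathbf{Z}$ boundedly into $H^{3/2}(0,1)\times H^{3/2}(0,1)$, which is convenient for verifying that the associated boundary control system is well posed.
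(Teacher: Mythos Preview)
Your proof is correct and follows essentially the same route as the paper: an elementary one-variable trace estimate obtained by writing $u(x,0)^2$ via the fundamental theorem of calculus, averaging, and bounding by the $H^1$ norm, then applying this componentwise to $(f,p)$. The paper's computation is organized slightly differently (it works with $F(x)=\int_0^L f^2(x,y)\,\d y$ and integrates in the other variable), and your additional remarks about the trace theorem and the sharper $H^{3/2}$ target are bonuses the paper does not mention, but the core argument is the same.
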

\begin{proof}
Consider the function $F$ defined for $x\in[0,1]$ as
\[
F(x)=\int_0^Lf^2(x, y)\d y,
\]
 we have
\begin{align*}
F(0)&=F(x)-  \int_0^x \diff{F(\xi)}{ \xi}\,\d \xi\\
& = F(x)-  2\int_0^x \left(\int_0^Lf(\xi, y)\p x f(\xi, y)\,\d \xi\right)\d y\\
&\le F(x) +    2\int_0^1 \left(\int_0^L|f(\xi, y)|\,|\p x f(\xi, y)|\,\d \xi\right)\d y\\
& \le F(x) +     \int_0^1 \left(\int_0^Lf^2(\xi, y)\,\d \xi\right)\d y +  \int_0^1 \left(\int_0^L(\p x f)^2(\xi, y)\,\d \xi\right)\d y\,.
\end{align*}
By integrating this inequality with respect to $x$ on the interval $[0,1]$, we obtain
\begin{align*}
F(0) & \le \int_0^1F(x)\,\d x + \int_ \Omega f^2\d x\d y + \int_ \Omega(\p y f)^2\d x\d y
\end{align*}
which reads
\begin{align*}
\|f(\cdot,0)\|_{L^2(0,1)} &\le 2\|f\|^2_{L^2( \Omega)} + \|\p x f\|^2_{L^2( \Omega)}
\end{align*}
and so we have
\begin{equation*}
\|f(\cdot,0)\|_{L^2(0,1)} \le 2 \|f\|_{H^2( \Omega)}\,.
\end{equation*}
Clearly,    the same inequality is true for $p$, which proves that $ {G}$ is a bounded operator. 
\end{proof}

Now we have.
\begin{proposition}
\label{prop:boundary-cont}
The operators $G$and $L$ satisfy the following properties.
\begin{enumerate}
\item
$ {G}$ is onto;
\item
$\Ker  {G}$ is dense in $\mathbf{X}$\,;
\item
$- {L}$ restricted to $\Ker  {G}$ is onto;
\item
$\Ker (- {L})\cap\Ker  {G}=\{0\}$\,.
\end{enumerate}
\end{proposition}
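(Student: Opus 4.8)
The four assertions are precisely the hypotheses under which the pair $(L,G)$ defines a boundary control system in the sense of \cite[Chap.~10]{TucWeiss}, so the plan is simply to verify them one at a time, most of the analytic content having already been placed in Appendix~\ref{deft-op-A}. Assertion~(2) is the cheapest: any pair $(f,p)$ with $f,p\in C_c^\infty(\Omega)$ lies in $H^2(\Omega)\times H^2(\Omega)$ and vanishes near $\partial\Omega$ together with all its derivatives, hence belongs to $\mathbf Z$ (the homogeneous Neumann conditions on $\mathrm B_2,\mathrm B_3,\mathrm B_5,\mathrm B_6$ hold trivially, and the two Robin conditions at $x=1$ both reduce to $0=0$), while $G(f,p)=(f(\cdot,0),p(\cdot,L))=(0,0)$; since $C_c^\infty(\Omega)$ is dense in $L^2(\Omega)$, $\Ker G$ is dense in $\mathbf X$.

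For (3) and (4), observe that $-L$ with domain $\Ker G$ is exactly $-A$, where $A$ is the m-dissipative operator of Appendix~\ref{deft-op-A}: $A$ has the same differential expression as $L$, namely $(f,p)\mapsto\bigl(\alpha_f\Delta f-\beta_f\p yf,\ \alpha_p\Delta p+\beta_p\p yp\bigr)$, and its domain is $\Ker G$, because $\mathbf Z$ already encodes the Neumann and Robin conditions while the requirement $Gz=0$ adds precisely the homogeneous Dirichlet conditions on $\mathrm B_1$ and $\mathrm B_4$. Since $A$ is m-dissipative and, as noted above, invertible, the operator $-A\colon\Ker G\to\mathbf X$ is a bijection; its surjectivity is assertion~(3) and its injectivity is assertion~(4). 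A self-contained proof of~(4) is also short: testing $A(f,p)=0$ against $(f,p)$ in the inner product of $\mathbf X$ with respect to which $A$ is dissipative and integrating by parts turns the two convective terms into boundary integrals and the $x=1$ coupling into a nonnegative boundary integral, leaving $\alpha_f\|\nabla f\|^2+\alpha_p\|\nabla p\|^2=0$ up to positive constants; hence $f$ and $p$ are constant, and then identically zero by the Dirichlet conditions on $\mathrm B_1$ and $\mathrm B_4$. Assertion~(3) can similarly be recovered from a Lax--Milgram/Fredholm analysis of the same bilinear form followed by interior and boundary elliptic regularity.

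For~(1) I would exhibit a bounded right inverse of $G$, and I expect this to be the one genuinely new step. Given $(g_1,g_2)\in\mathbf U$, let $(f,p)$ solve the auxiliary elliptic system obtained from~\eqref{op-inhom} by replacing its two equations with a coercive pair, e.g.\ $-\Delta f+f=0$ and $-\Delta p+p=0$, keeping the Neumann and Robin conditions on $\mathrm B_2,\mathrm B_3,\mathrm B_5,\mathrm B_6,\mathrm I_f,\mathrm I_p$ and imposing the Dirichlet data $f(\cdot,0)=g_1$, $p(\cdot,L)=g_2$; the naive product ansatz $f=g_1(x)\eta(y)$, $p=g_2(x)\zeta(y)$ already fails the coupled Robin conditions at $x=1$, which is what forces one through such an elliptic solve. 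After subtracting a lift of the Dirichlet data and working in the inner product of $\mathbf X$ under which the $x=1$ coupling is dissipative, the associated bilinear form is coercive on $H^1(\Omega)\times H^1(\Omega)$ with homogeneous Dirichlet traces, so Lax--Milgram gives a unique weak solution; since every corner of the rectangle joins boundary pieces of compatible type (Neumann--Neumann or Dirichlet--Neumann at a right angle, with Robin behaving like Neumann for the corner analysis), elliptic regularity puts $(f,p)\in\bigl(H^2(\Omega)\times H^2(\Omega)\bigr)\cap\mathbf Z$ with $G(f,p)=(g_1,g_2)$. The one subtlety to watch is that the trace operator out of $H^2(\Omega)$ lands only in $H^{3/2}$ of the relevant edge, so strictly this argument establishes surjectivity of $G$ onto that trace space, which is merely dense in $\mathbf U$; I would double-check whether $\mathbf U$ is intended to be exactly the trace space.
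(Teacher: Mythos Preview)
Your overall strategy matches the paper's: items (2) and (4) are dispatched quickly, and (3) is reduced to the invertibility of $A$ from Appendix~\ref{deft-op-A} together with $H^2$ regularity of solutions. The key difference is how much weight you assign to that regularity step, and there you have a genuine gap.

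For (3), the identification $\mathcal D(A)=\Ker G$ is \emph{not} built into the definitions. The domain of $A$ in Appendix~\ref{deft-op-A} is $\{(f,p)\in\Hbc:(\Delta f,\Delta p)\in[L^2(\Omega)]^2\}$, with $\Hbc$ merely a closed subspace of $[H^1(\Omega)]^2$; no $H^2$ membership is assumed. The inclusion $\Ker G\subset\mathcal D(A)$ is immediate, but the reverse inclusion --- equivalently, that the unique $(f,p)\in\mathcal D(A)$ solving $A(f,p)=(u,v)$ actually lies in $H^2(\Omega)\times H^2(\Omega)$ --- is precisely what the paper identifies as the substantive content and proves in a separate proposition. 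The difficulty is not the corners (your remark that right-angle Dirichlet--Neumann and Neumann--Neumann transitions are benign is correct) but the \emph{coupled} Robin conditions at $x=1$, which tie $f$ and $p$ together and prevent a direct appeal to scalar regularity theory. The paper adapts an argument of Faierman: using cutoffs in $x$ to split $f$ into pieces supported near $x=0$ and near $x=1$, extending each by reflection to a larger rectangle, and then running the standard difference-quotient method. The reflection near $x=1$ produces an extra term in the weak formulation (coming from the trace $f(1,\cdot)$) which has to be shown to drop out for the specific test functions used in the tangential difference quotient. Your phrase ``followed by interior and boundary elliptic regularity'' hides exactly this work; as written, your argument for (3) is incomplete.

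On (1), your suspicion is well founded and in fact sharper than the paper, which simply declares the point ``obvious.'' Since $\mathbf Z\subset H^2(\Omega)\times H^2(\Omega)$, the range of $G$ lies in $H^{3/2}(0,1)\times H^{3/2}(0,1)$, strictly smaller than $\mathbf U=L^2(0,1)\times L^2(0,1)$. In the Tucsnak--Weiss framework one typically either takes $\mathbf U$ to be the natural trace space or only needs $G$ to have dense range; your flag is the right one to raise.
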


The two first points are obvious. As regards   the third point, notice first that $\Ker  {G}\subset\mathcal{D}( {A})$ moreover,   given $(u,v)\in \mathbf{X}$, we 
know that there exist $(f,p)\in \mathcal{D}( {A})$ such that $\mathcal{A}(f,p)=(u,v)$.  In order to prove that the pair 
$(f,g)$ is in $\Ker \mathbf{G}$,   we have to  show that  $f$ and $p$ are in $H^2( \Omega)$.  The proof of the regularity of the weak solution of an elliptic equation is a classical result (see \emph{e.g.}~\cite{evans2010partial} or~\cite{Brezis}) but this result assumes Dirichlet or Neumann boundary conditions.
 In~\cite{Faierman}, M.~Faierman proves the regularity of the weak solution of an elliptic equation $Mq=r$ where $M$ is an elliptic operator defined on a rectangle $R$  and where, as for the system considered in this paper, the boundary conditions are of mixed type: Dirichlet, Neumann of Robin. More specifically, under the condition that   $r\in L^2(R)$, Faierman proves that $q$ is in $H^2(R)$.  This proof is intended for an elliptic equation whose unknown function $f$ takes its values in $\R$ while, in this paper,  our unknown is a couple of functions $(f,p)$\,; nevertheless, it suffices to adapt slightly the reasoning of Faierman to prove that 
 $f$ and $p$ are in $H^2( \Omega)$. 
 
 \begin{proposition}
 If both functions $u$ and $v$ are in $L^2( \Omega)$, the unique pair $(f,p)\in\mathcal{D}(A)$ such that $A(f,p)=(u,v)$ belongs to 
 $\mathcal{H}^2( \Omega)\times\mathcal{H}^2( \Omega)$\,.
\end{proposition}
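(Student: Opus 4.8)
The plan is to convert the coupled elliptic system into two \emph{decoupled} scalar boundary value problems on the rectangle $\Omega$ and then to invoke, with a minor adaptation, the regularity theorem of Faierman~\cite{Faierman} for a scalar elliptic equation on a rectangle carrying mixed Dirichlet/Neumann/Robin boundary conditions.

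I would begin by extracting what is already available from $(f,p)\in\mathcal D(A)$. Since $A$ is the operator associated with a bounded, coercive sesquilinear form on a space of $H^1$ type (see the Appendix), $\mathcal D(A)$ is contained in that form domain; in particular $f,p\in H^1(\Omega)$, the conditions $f(\cdot,0)=T_f$ and $p(\cdot,L)=T_p$ hold as identities of $L^2$ traces, and the Neumann and Robin conditions hold in the variational sense. Because $f,p\in H^1(\Omega)$, moving the Laplacian to one side of each equation sends the first-order advection terms $\beta_f\p y f$ and $\beta_p\p y p$ to the right-hand side, where they lie in $L^2(\Omega)$; thus each unknown satisfies an equation of the form $\alpha_f\Delta f=F_f$, $\alpha_p\Delta p=F_p$ with $F_f,F_p\in L^2(\Omega)$, the advection being a harmless lower-order perturbation. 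Likewise the traces $f(1,\cdot)$ and $p(1,\cdot)$ belong to $H^{1/2}(0,L)$, so setting $h_f:=\gamma_f\,p(1,\cdot)$ and $h_p:=\gamma_p\,f(1,\cdot)$ — both in $H^{1/2}(0,L)$ — the interface conditions in~\eqref{op-inhom} become genuine inhomogeneous Robin conditions,
\[
\p x f(1,\cdot)+\gamma_f f(1,\cdot)=h_f,\qquad \p x p(1,\cdot)+\gamma_p p(1,\cdot)=h_p,
\]
decoupled in the sense that $h_f$ involves only $p$ and $h_p$ only $f$. Testing the variational formulation of~\eqref{op-inhom} against pairs $(\varphi,0)$ and $(0,\psi)$ shows that $f$ solves on $\Omega$ a scalar problem with $L^2$ source, a Dirichlet condition on $\{y=0\}$, homogeneous Neumann conditions on $\{x=0\}$ and $\{y=L\}$, and the Robin condition above on $\{x=1\}$, while $p$ solves the analogous scalar problem with the Dirichlet edge on $\{y=L\}$, Neumann edges on $\{x=0\}$ and $\{y=0\}$, and a Robin condition on $\{x=1\}$.

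It then remains to apply scalar $H^2$-regularity to each of $f$ and $p$: the source is in $L^2(\Omega)$, the Dirichlet datum is constant (hence smooth), the Robin datum is in $H^{1/2}$ of its edge, and the mild corner-compatibility conditions of~\cite{Faierman} hold; Faierman's theorem — proved for exactly this class of mixed boundary conditions on a rectangle and covering interior, edge, and corner regularity at once — then gives $f\in H^2(\Omega)$ and $p\in H^2(\Omega)$, i.e.\ $(f,p)\in\mathcal{H}^2(\Omega)\times\mathcal{H}^2(\Omega)$. The substance of the argument, and the step I expect to be the main obstacle, is the behaviour at the four vertices of $\Omega$, since it is precisely at corners where boundary conditions of different types meet that $H^2$ regularity can be lost. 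Here the opening angle is $\pi/2$ at every vertex, and the pairs that occur are $(\text{Dirichlet},\text{Neumann})$, $(\text{Neumann},\text{Neumann})$, $(\text{Robin},\text{Dirichlet})$, and $(\text{Robin},\text{Neumann})$; since a Robin condition is a zeroth-order perturbation of a Neumann condition it does not change the leading corner exponents, and for a right angle all of these exponents are positive integers, so the would-be corner singularities are polynomials and no loss of regularity occurs. This is exactly the content of Faierman's analysis; relative to~\cite{Faierman} the only modifications are the decoupling performed above (which reduces the vector-valued problem to two scalar ones) and the replacement of Neumann by Robin on $\{x=1\}$ (harmless, as just noted), so the hard part of the argument is entirely imported from~\cite{Faierman}.
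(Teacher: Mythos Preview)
Your approach is correct, and it differs substantively from the paper's. You decouple the system: once $(f,p)\in H^1(\Omega)\times H^1(\Omega)$ is known from the form domain, you freeze the traces $p(1,\cdot),f(1,\cdot)\in H^{1/2}(0,L)$ and treat each of $f$ and $p$ as the weak solution of a \emph{scalar} mixed Dirichlet--Neumann--Robin problem on the rectangle with $L^2$ source and $H^{1/2}$ Robin datum, after which Faierman's scalar result (together with the right-angle corner exponent check you give) yields $H^2$. The paper does \emph{not} decouple. Instead it localises $f$ via a cutoff $\phi_1+\phi_2=1$, extends $\phi_1 f$ by even reflection across $x=0$ and $\phi_2 f$ by an odd-type reflection across $x=1$ (with an additive correction $2f(1,y)$), and then runs Evans-style difference quotients directly. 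The coupling on $x=1$ survives in the weak form for the reflected piece as an integral $\int_{\Omega_2}(\p y f)(1,y)\,\p y\psi_2\,\d x\,\d y$ whose integrand is constant in $x$; the point of the proof is that this term vanishes when $\psi_2$ is a tangential ($x$-direction) difference quotient with compactly supported weight, so the standard $H^2$ estimate goes through without ever isolating the coupling as boundary data. Your route is cleaner and more conceptual, but it leans on Faierman's theorem as a black box that must accommodate an inhomogeneous Robin datum in $H^{1/2}$ (this is true but needs either a lifting or a check that Faierman's statement covers it). The paper's route is more hands-on and essentially self-contained from Evans, at the price of the reflection bookkeeping; its advantage is that it never has to discuss the regularity of the Robin datum or lift it.
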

\begin{proof}
 Hereafter, we treat only the case of $f$, the reasoning for $p$ being the same than for $f$.  First, we define two extensions of $f$: $f_1$ on $ \Omega_1=[-1,1]\times[0,L]$ and $f_2$ on $ \Omega_2=[0,2]\times[0,L]$ as follows
\begin{align*}
f_1(x,y) &=  \begin{cases}
\phi_{1}(x)f(x,y) & \text{if \ensuremath{0\le x\le1}} \\
\phi_{1}(-x)f(-x,y) & \text{if \ensuremath{-1\le x \le0} }
\end{cases}\\[0.75em]
f_2(x,y)&=
 \begin{cases}
\bigl(1-\phi_{1}(x)\bigr)f(x,y) & \text{if \ensuremath{0\le x\le1}}\protect\\
-\bigl(1-\phi_{1}(2-x)\bigr)f(2-x, y) +2 f(1,y) & \text{if \ensuremath{1\le x \le2} }
 \end{cases}
\end{align*}
where $ \phi_1$ is a $C^\infty$ function such that 
\[
\phi_{1}(x)=
\begin{cases}
1 & \text{if \ensuremath{x \le1/4}}\\
0 & \text{si \ensuremath{3/4\le x\le1}}
\end{cases}
\]
and $0\le \phi_1(x)\le1$ for all $x\in\R$. 
Notice that $f_1$ is defined as in~\cite{Faierman} but the definition for $f_2$ (as well as the notations) differs slightly from the one adopted in this paper. First, it is easily shown that $f_i\in H^2( \Omega_i)$ ($i=1,2$), then we shall   show that $f_1$ and $f_2$ can be regarded as weak solutions to some PDE's. We begin with function $f_2$: take  
$\psi_2\in H^1( \Omega_2)$ such that $\psi_2(x,0)=0$ for $0\le x\le2$, first notice that we have
 \begin{multline}
 \label{f2-f}
\alpha_f (\nabla f_2)(\nabla \psi_2) + \beta_f(\p yf_2)\psi_2 = 
\alpha_f(\nabla f)\bigl(\nabla ( \phi_2\psi_2)\bigr) + \beta_f(\p y f)( \phi_2 \psi_2)\\
 - \alpha_f(\p x \phi_2)(\p x f)\psi_2 + \alpha_f(\p x \phi_2)f(\p x \psi_2)
 \end{multline}
 where $ \phi_2(x) := 1- \phi_1(x)$.
  Integrating by parts and taking into account that $\mathcal{A}(f,p)=(u,v)$, we obtain the following equality 
  \begin{equation}\label{f-weak-form}
  \begin{aligned}
   \int_ {\Omega}\alpha_f (\nabla f)\cdot(\nabla\psi_2)\,\d x\d y &+ \int_{ \Omega} \beta_f (\p yf)\psi_2\,\d x\d y \\&=-\int_{ \Omega}  u\,\psi_2\,\d x\d y + \alpha_f\int_0^L \p xf(1,y)\psi_2(1,y)\,\d y
  \end{aligned}
  \end{equation}
  from this equality and~\eqref{f2-f}, and taking into account that $\phi_2(1)=1$, and $ (\p x\phi_2)(0)= (\p x \phi_2)(1)= 0$, we obtain
 \begin{align*}
 \int_ {\Omega}\alpha_f (\nabla f_2)&\cdot(\nabla\psi_2)\,\d x\d y + \int_{ \Omega} \beta_f (\p yf_2)\psi_2\,\d x\d y\\
 &=
 -\int_{ \Omega} ( \phi_2 u+ \alpha_f(\p x \phi_2)(\p x f))\psi_2\,\d x\d y \\
 &\mathrel{\phantom{=}} \mbox{} + 
\alpha_f\int_0^L \p xf(1,y)\psi_2(1,y)\,\d y+  \int_ \Omega \p x \phi_2f\,(\p x\psi_2)\,\d x\d y\\
 & = -\int_{ \Omega} ( \phi_2 u+ \alpha_f(\p x \phi_2)(\p x f))\psi_2\,\d x\d y +
  \alpha_f\int_0^L \p xf(1,y)\psi_2(1,y)\,\d y \\
&\phantom{\mathrel{=}} \mbox{}  + \alpha_f \int_0^L \bigl[(\p x \phi_2)f\psi_2\bigr]_{x=0}^{x=1}\d y 
- \alpha_f\int_ \Omega\bigl( (\p {xx} \phi_2) f + (\p x \phi_2)(\p x f)\bigr)\psi_2\d x\d y\\
& = -\int_{ \Omega} ( \phi_2 u+g_2)\psi_2\,\d x\d y +  \alpha_f\int_0^L (\p x f)(1,y))\psi_2(1,y)\,\d y
 \end{align*}
 where $g_2$ is the function defined as
\begin{equation*}
g_2(x,y) := \alpha_f\bigl(\p{xx} \phi_2(x)\,f(x,y)+2\p x \phi_2(x)\p x f(x,y)\bigr)\,.
\end{equation*}
From this formula, we deduce that,
\begin{multline}
\label{eq-f2}
\int_ {\Omega_2}\alpha_f (\nabla f_2)\cdot(\nabla\psi_2)\,\d x\d y + \int_{ \Omega_2} \beta_f (\p yf_2)\psi_2\,\d x\d y=
-\int_{ \Omega_2} ( \phi_2 u+g_2)^*\psi_2\,\d x\d y \\
+ 2 \alpha_f \int_{ \Omega_2}(\p y f)(1,y)(\p y\psi_2)(x,y)\,\d x\,\d y
\end{multline}
where
\begin{equation*}
(\phi_2 u+g_2)^*:=  \begin{cases}
(\phi_2 u+g_2)(x,y)& \text{if $(x,y)\in \Omega$}\\
-(\phi_2 u+g_2)(2-x,y) & \text{if $(x,y)\in[1,2]\times[0,L]$.}
\end{cases}
\end{equation*}
Due to the second integral in the right-hand member in~\eqref{eq-f2}, function $f_2$ cannot be regarded as the weak solution of a PDE, nevertheless, we can apply the method of difference quotients. In the proof of~\cite[Th. 1, p. 329]{evans2010partial}, an open set $V\subset\bar V\subset \Omega_3$ is fixed and a smooth cutoff function $ \theta$ is chosen ($ \theta$ is equal to 1 on $V$ and to 0 outside an open set $W$ such that 
$U\subset W\subset \bar W\subset \Omega_3$); then    function $\psi_2$ in equality~\eqref{eq-f2} is taken to be equal to 
\begin{equation*}
\psi_2(x,y) = \frac1{h^2}\left( \theta^2(x-h,y)(f(x,y)-f(x-h,y)- \theta^2(x,y)(f(x+h,y)-f(x,y)\right)\,.
\end{equation*}
With this choice of $\psi_2$, the second integral in the right-hand member of~\eqref{eq-f2} is zero
and it follows that we can argue as in the proof of~\cite[Th. 1, p. 329]{evans2010partial}.

To prove the boundary regularity, we can still proceed as in~\cite{evans2010partial}, in this case also, we do not have to take care of the second integral in the right-hand member of~\eqref{eq-f2}. To be more precise, consider a point $(x_0,0)$ of the edge $[0,2]\times\{0\}$ of $ \Omega_2 $ with $1/4\le x_0\le 7/4$. Denote by $U_r$ the half ball $U_r=B(x_0,r)\cap\R^2_+$ where, as usual, $B(x_0,r)$ denotes the open ball of radius $r$ centered at $x_0$ and $\R_+^2=\ens{(x,y)\in\R^2}{y>0}$; moreover $r$ is chosen small enough in order that $B(x_0,2r)$ does not intersect the edge $[0,2]\times\{L\}$. Select a smooth cutoff function $ \sigma$ satisfying
 \begin{syst}
& \sigma\equiv 1 \text{ on $B(x_0,r)$}, &\sigma\equiv0 \text{ on $\R^2\smallsetminus B(x_0,2r)$}\\
& 0\le \sigma\le1\,.
 \end{syst}
 Let $h>0$ be small and write
 \begin{equation}
\label{psi3}
 \psi_2(x,y) := -D_1^{-h}( \sigma^2D_1^h K^a)
 \end{equation}
 where, for any function $K$,  $D_1^h K$ denotes the difference quotient 
 \[
 D_1^hK(x,y):=\frac{K(x+h)-K(x)}h\,.
 \]
With this choice of $\psi_2$, it is easily seen that  the second integral in the right-hand member of~\eqref{eq-f2} vanishes. Thus we   can argue   exactly as in the proof 
of~\cite[Th.4, p. 336]{evans2010partial} in order to establish the following estimate
\[
\int_U\|D_1^h\nabla f_2\|^2\d x\d y\le C\,,
\]
which proves the result. We treat the regularity near the piece of boundary $[1,2]\times\{L\}$ in the same way and we notice that we
do not have to worry about the corner since $f_2$ is zero in some
neighborhoods of the edges $\{1\}\times[0,L]$ and $\{2\}\times[0,L]$.

 The case of $f_1$ is slightly simpler, take $\psi_1$ in $H^1( \Omega_1)$ and  such that $\psi_1(x,0)=0$ for $-1\le x\le 1$, we have
where $g_3$ is defined similarly as $g_1$.  We obtain for $f_1$ a formula analogous to~\eqref{f2-f}, from this formula and~\eqref{f-weak-form}, we get
\begin{equation*}
 \int_ {\Omega}\alpha_f (\nabla f_1)\cdot(\nabla\psi_1)\,\d x\d y + \int_{ \Omega} \beta_f (\p yf_1)\psi_1\,\d x\d y = 
  -\int_{ \Omega} ( \phi_1 u+g_1)\psi_1\,\d x\d y\,;
\end{equation*}
notice that in this case, as $\phi_1(1)=0$, we do not have to deal with a term like the second integral in the right-hand member of~\eqref{eq-f2}. From this equality, we deduce 
\begin{equation*}
 \int_ {\Omega_1}\alpha_f (\nabla f_1)\cdot(\nabla\psi_1)\,\d x\d y + \int_{ \Omega_1} \beta_f (\p yf_1)\psi_1\,\d x\d y = 
  -\int_{ \Omega_1} ( \phi_1 u+g_1)^*\psi_1\,\d x\d y\,;
\end{equation*}
here $g_1$ and $(\phi_1u+g1)^*$ are defined analogously as $g_2$ and $(\phi_2u+g_2)^*$. 
These computations show that $f_1$ is a weak solution of the following problem: 
\begin{gather*}
\alpha_f \Delta f_1- \beta_f\p y f_1 =(u \phi_1+g_1)^* \text{ on $ \Omega_1$}\\
f_1=0 \text{ on $\partial \Omega_1\smallsetminus \Gamma_1$}\\
\diff{f_1}{\nu}=0 \text{ on $ \Gamma_1$}
\end{gather*}
here, as usual, $\mathrm{d}f_i/\mathrm{d}\nu$ denotes the normal derivative and $ \Gamma_1$ is the edge of the rectangle $ \Omega_1$ defined as $  \Gamma_1=[-1,1]\times\{L\}$. Classical results 
(see \emph{e.g.}~\cite{evans2010partial})  allow us to assert that $f_1$    is in $H^2_\mathrm{loc}( \Omega_i)$. Concerning the regularity up to the boundary, we can argue exactly as in  in~\cite[Th. 4, p. 336]{evans2010partial}; thus, function $f_1$  is in $H^2( \Omega_1)$. Consider now the function $f_1+f_2$ restricted to $ \Omega$, this function is in $H^2( \Omega)$ and is equal to $f$, thus we proved that $f\in H^2( \Omega)$.
\end{proof}
  
   Take now $(f,p)$ in $\Ker( - {L})\cap\Ker  {G}$, $(f,p)$ belongs to $\mathcal{D}( {A})$, so $ {L}(f,p)=(0,0)$ implies $ {A}(f,p)=(0,0)$ which in turn implies $(f,p)=(0,0)$ because $ {A}$ is injective.  This achieve the proof of Theorem~\ref{prop:boundary-cont}.
 
 Thus, we can apply \cite[ Proposition~10.1.2]{TucWeiss}: there exist a unique operator $ {B}\in\mathcal{L}(\mathbf{U},\mathbf{X}_{-1})$ such that 
\begin{equation*}
 {L}= \mathcal{A}+ {B} {G}\,.
\end{equation*}
Here $\mathcal{A}$ denotes the restriction of $ {L}$ to $\Ker  {G}$ and is thus identical to the operator $ {A}$ defined in section~\ref{deft-op-A}; $\mathbf{X}_{-1}$ denotes the completion of the space $\mathbf{X}$ with respect to the norm $\|(u,v)\|_{-1}=\|\mathcal{A}^{-1}(u,v)\|$\,.

Operator $ {A}$ is m-dissipative, therefore,  it is the generator of a contraction semigroup. Moreover from equalities~\eqref{dissp-A0} and~\eqref{dissip-B0},   we obtain 
\begin{align*}
\Ps{ {A}(f,p)}{(f,p)}_{{[L^2( \Omega)]}^2} & \le - \left(\|\nabla f\|^2+\|\nabla p\|^2\right)
\end{align*}
from the Poincar\'e's inequality,we have 
\begin{align*}
\int_ \Omega \bigl(\|\nabla\|^2+\|\nabla p\|^2\bigr) \, \d x \d y \ge C\int_ \Omega (f^2+p^2) \, \d x \d y\,,
\end{align*}
which implies that $ {A}\le -C\Id$\,. Denoting by $\T_t$  the semigroup generated by $ {A}$, we thus have 
$\|\T_t\|\le e^{-Ct}$\,. As $ {A}$ is the generator of a strongly continuous semigroup, for every $T>0$, $(f_0,p_0)\in \mathbf{Z}$, and 
$(u,v)\in \mathbf{U}$ such that $ {G}(f_0,p_0)=(u,v)$, the equation
\begin{align*}
\diff{(f,p)}{t}&= {L}(f,p)=\mathcal{A}(f,p)+ {B} {G}(u,v),\\
(f(0),p(0)) &= (f_0,p_0),
\end{align*}
admits a unique solution $(f,p)$ such that $(f,p)\in C([0,T];\mathbf{Z})\cap C^1([0,T];\mathbf{X})$. Moreover as $\T_t$ is exponentially stable, we have $\lim_{t\to\infty}(f(t),p(t)) = (f_\infty,p_\infty)$  where $(f_\infty,p_\infty)$ is the unique solution of the equation
$ {B} {G}(f_0,p_0)=- {A}(f_\infty,p_\infty)$. Thus, we have proved that, given any pair of initial conditions $(f_0,p_0)$ and any  pair of input temperatures $(T_f,T_p)$, there exists a unique solution to system~\eqref{glv1.1} that tends exponentially towards an asymptotic state $(f_\infty,p_\infty)$, as $t$ tends to infinity.

\section{Co-current operator}\label{cocurentsection}

In this section, we consider a  DCMD model with co-current; the equations modeling this device are the same 
as~\eqref{glv1.1} except for the sign of $ \beta_p$\,: they write
\begin{systn}
\label{glv1.2}
& \partial_{t}f(t,x,y)-\alpha_{f}\Delta f(t,x,y)+\beta_{f}\partial_{y}f(t,x,y) =0, &t\ge0, (x,y)\in \Omega\\
& \partial_{t}p(t,x,y)-\alpha_{p}\Delta p(t,x,y)+\beta_{p}\partial_{y}p(t,x,y) =0 &t\ge0, (x,y)\in \Omega\\
&  f(t,x,0)=T_f & t\ge0, 0\le x\le 1,\\
  &\partial_{x} f(t,0,y) =0 & t\ge0, 0\le y \le L,\\
 & \partial_{y} f(t,x,L)  =0& t\ge0, 0\le x \le 1,\\
&   p(t,x,0) =T_{p} & t\ge0, 0\le x \le 1,\\
  & \partial_{x} p(t,0,y) =0 & t\ge0, 0\le y \le L,\\
&  \partial_{y} p(t,x,L) = 0 &t\ge0, 0\le x \le 1,\\
&   \partial_{x} f(t,1,y)  =-\gamma_f \left(f(t,1,y)-p(t,1,y)\right) & t\ge0, 0\le y \le L,\\
& \partial_{x} p(t,1,y) =\gamma_p \left(f(t,1,y)-p(t,1,y)\right) & t\ge0, 0\le y \le L,\\
& f(0,x,y) =f_{0}(x,y) & (x,y)\in \Omega,\\
&   p(0,x,y) =p_{0}(x,y) &(x,y)\in \Omega\,.
 \end{systn}
Introducing the following change of variables
\begin{align}
\label{chgt-var}
g(t,x,y) & =f(t,x,y) e^{-\frac{\beta_{f}}{2\alpha_{f}}y} ,& q(t,x,y)=p(t,x,y) e^{-\frac{\beta_{p}}{2\alpha_{p}}y}\,.
\end{align}
 we then have 
\begin{align*}
\p{xx}g(t,x,y)&=\p{xx}f(t,x,y)\exp\biggl(-\frac{\beta_{f}}{2\alpha_{f}}y\biggr)\,, \\[0.5ex]
\p yg(t,x,y)&=\Bigl(\p yf(t,x,y) - \frac{\beta_{f}}{2\alpha_{f}}f(x,y)\Bigr)\exp\biggl(-\frac{\beta_{f}}{2\alpha_{f}}y\biggr)\,,\\[0.75ex]
\p{yy}g(t,x,y)&=\Bigl(\p{yy}f(t,x,y) -\frac{\beta_{f}}{\alpha_{f}}\p yf(t,x,y)\Bigr)\exp\biggl(-\frac{\beta_{f}}{2\alpha_{f}}y\biggr)
+\frac{\beta^{2}_{f}}{4\alpha^{2}_{f}}g(t,x,y)\,,
\end{align*}
and so
\begin{align*}
\Bigl(\alpha_{f}\Delta f(t,x,y)- \beta_f\p yf(t,x,y)\Bigr)\exp\biggl(-\frac{\beta_{f}}{2\alpha_{f}}y\biggr) 
&= \alpha_f  \Delta g(t,x,y) - \frac{ \beta_f^2}{4 \alpha_f}g(t,x,y)\,.
\end{align*}
Similar computations lead to
\begin{align*}
\Bigl(\alpha_p\Delta p(t,x,y)- \beta_p\p yp(t,x,y)\Bigr)\exp\biggl(-\frac{\beta_{p}}{2\alpha_{p}}y\biggr) 
&= \alpha_p  \Delta q(t,x,y) - \frac{ \beta_p^2}{4 \alpha_p}q(t,x,y)\,.
\end{align*}

Regarding the  boundary conditions, we have
\begin{align*}
g(t,x,0)&=T_f\\
\p xg(t,0,y)&=0\\
\p yg(t,x,L) & =-\frac{\beta_{f}}{2\alpha_{f}}g(t,x,L)\\
q(t,x,0) & =T_{p}\\
\p xq(t,0,y) & =0\\
\p yq(t,x,L)& = -\frac{\beta_{p}}{2\alpha_{p}}q(t,x,L)\\
\p xg(t,1,y)&=-\gamma_f \biggl(g(t,1,y)-q(t,1,y)e^{\left(-\frac{\beta_{f}}{2\alpha_{f}}+\frac{\beta_{p}}{2\alpha_{p}}\right)y}\biggr)\\
\p xq(t,1,y)&=\p xp(t,1,y)e^{-\frac{\beta_{p}}{2\alpha_{p}}y}=\gamma_p \biggl(e^{\left(\frac{\beta_{f}}{2\alpha_{f}}-\frac{\beta_{p}}{2\alpha_{p}}\right)y}g(1,y)-q(1,y)\biggr)\,.
\end{align*}
We assume that the flow velocities can be adjusted in such a way that
\begin{equation}\label{Assumptiondiag}
\frac{\beta_{f}}{2\alpha_{f}}=\frac{\beta_{p}}{2\alpha_{p}}\,.
\end{equation}
Under the change of unknown functions~\eqref{chgt-var}, and with the assumption~\eqref{Assumptiondiag}, system~\eqref{glv1.2} becomes
\begin{systn}
\label{glv1.3}
& \partial_{t}g(t,x,y)-\alpha_{f}\Delta g(t,x,y)+\frac{\beta_{f}^2}{4 \alpha_f} g(t,x,y) =0, &t\ge0, (x,y)\in \Omega\\
& \partial_{t}q(t,x,y)-\alpha_{p}\Delta q(t,x,y)+\frac{\beta_{p}^2}{4 \alpha_p}q(t,x,y) =0 &t\ge0, (x,y)\in \Omega\\
&  g(t,x,0)=T_f& t\ge0, 0\le x\le 1,\\
  &\p xg(t,0,y)=0 & t\ge0, 0\le y \le L,\\
 &\p yg(t,x,L)  =-\frac{\beta_{f}}{2\alpha_{f}}g(t,x,L)& t\ge0, 0\le x \le 1,\\
&   q(t,x,0)  =T_{p} & t\ge0, 0\le x \le 1,\\
  & \partial_{x} q(t,0,y) =0 & t\ge0, 0\le y \le L,\\
&\p yq(t,x,0) = -\frac{\beta_{p}}{2\alpha_{p}}q(t,x,0)&t\ge0, 0\le x \le 1,\\
&   \partial_{x} g(t,1,y)  =-\gamma_f \left(g(t,1,y)-q(t,1,y)\right) & t\ge0, 0\le y \le L,\\
& \partial_{x} q(t,1,y) =\gamma_p \left(g(t,1,y)-q(t,1,y)\right) & t\ge0, 0\le y \le L,\\
& g(0,x,y) =g_{0}(x,y):= f_0(x,y)e^{\bigl(-\frac{ \beta_f}{2 \alpha_f}y\bigr)} & (x,y)\in \Omega,\\
&   q(0,x,y) =q_{0}(x,y):= p_0(x,y)e^{\bigl(-\frac{ \beta_p}{2 \alpha_p}y\bigr)}  &(x,y)\in \Omega\,.
 \end{systn}

We shall prove no that the operator, denoted by $\tilde{{A}}$, and  related to this system is diagonalizable. The spaces related to this operator will be slightly different from the ones related to operator $ {A}$. First, we introduce the space  
$\mathbf{\tilde{E}}$ defined as  the set of those pairs
 $(g,q)$  in  $\bigl[H^{1}(\Omega) \cap C^{1}(\bar\Omega)\bigr]^{2}$ such that
 \begin{itemize}
 \item
 $ g(x,0)=q(x,0) = 0$  for every $x\in(0,1)$\,;
 \item
 $\partial_{x} g(0,y)=\partial_{x} q(0,y) = 0$,   for every $ y \in (0,L)$\,;
 \item
$\p yg(x,L)  =-\frac{\beta_{f}}{2\alpha_{f}}g(x,L)$for every $x\in(0,1)$\,;
\item 
   $\p yq(x,L) = -\frac{\beta_{p}}{2\alpha_{p}}q(x,L)$ for every $x\in(0,1)$\,;
\item
$ \partial_{x}g(1,y)= - \gamma_f\bigl(g(1,y)-q(1,y)\bigr)$, for every $y\in(0,L)$\,;
\item
$ \partial_{x}q(1,y)=\gamma_p\bigl(g(1,y)-q(1,y)\bigr)$ for every $y\in(0,L)$\,.
 \end{itemize}

As section~\ref{deft-op-A}, the space $H^1( \Omega)\times H^1( \Omega)$ is equipped with the product topology, making it an Hilbert space whose norm is defined by~\eqref{def-norme-H1}. On $L^2( \Omega)\times L^2( \Omega)$, we consider again the inner product given by~\eqref{ps-L2}.  We denote then by $\tHbc$ the closure of $\mathbf{\tilde E}$ in 
$\bigl[H^{1}(\Omega)\bigr]^{2}$\,; recall that the induced norm on $\tHbc$ is also defined 
by~\eqref{def-norm-Hbc}.

The  operator $\tilde{ {A}}$ is then defined as follows: its domain is given by 
\[
\mathcal{D}(\tilde{ {A}}):=
\ens{(g,q)\in \tHbc}{(\Delta g,\Delta q)\in \bigl[L^{2}(\Omega)\bigr]^{2} }\,;
\]
and, for every $(g,q)\in\mathcal{D}(\tilde{ {A}})$, 
\[
\tilde{ {A}}(g ,q ) = \bigl(\alpha_{f}\Delta g - \frac{ \beta_f^2}{4 \alpha_f}g , \alpha_{p}\Delta q -\frac{ \beta_p^2}{4 \alpha_p}\bigr)\,. 
\]
Using similar arguments 	as in section~\ref{sec:op-A0}, we can prove that $\tilde{ {A}}$ is m-dissipative and diagonalizable;  moreover, reasoning as in section~\ref{sec-exist-unic}, we can prove that, given an initial condition in $\mathcal{D}(\tilde{ {A}})$, system~\eqref{glv1.3} has a unique solution and that this solution converges, as $t\to\infty$, towards the solution $(g_\infty,q_\infty)$ of the equation $\tilde{ {B}}\tilde{ {G}}(g_0,q_0)=-\tilde{ {A}}(g_\infty,q_\infty)$.
\section{Output Tracking for 2D Direct Contact Membrane Distillation System}\label{section6}
 In this section, we discuss  the  output tracking for the DCMD system. We  propose an output-feedback strategy for the output-tracking of the DCMD, in presence of  unknown disturbances. We use the active disturbance rejection control (ADRC) method. We start our analysis by the co-current configuration; the counter-current configuration is briefly discussed in the conclusion. The system \eqref{glv1.2} can be written in the matrix form 
\begin{equation}\label{BBGL0.03}
\begin{cases}
\displaystyle \partial_{t}w(t,x)-\alpha\Delta w(t,x)+B.\nabla w(t,x)=0&\ t>0,\,\,\, x\in\Omega,\\
\displaystyle \frac{\partial w(t,x)}{\partial \nu}=\mathcal{M}w(t,x)&\ t>0,\,\,\, x\in\Gamma_{4},\\
\displaystyle \frac{\partial w(t,x)}{\partial \nu}=u(t)&\ t>0,\,\,\, x\in \Gamma_{3},\\
\displaystyle \frac{\partial w(t,x)}{\partial \nu}=0& \ t>0,\,\,\, x\in \Gamma_{2},\\
\displaystyle \frac{\partial w(t,x)}{\partial \nu}=d(t)&\ t>0,\,\,\, x\in \Gamma_{1},\\
 \displaystyle w(0,x)=w_{0}(x)&  x\in\Omega,\\
 \displaystyle w(t,x)=y_{m}(t,x)& \ t>0,\,\,\, x\in \Gamma_{1},\\
 \displaystyle y_{r}(t,x)=w(t,x)& \ t>0,\,\,\, x\in \Gamma_{3},
\end{cases}
\end{equation}
where $\Gamma_{1}=(0,1)\times\{0\}$, $\Gamma_{2}=\{0\}\times(0,L)$, $\Gamma_{3}=(0,1)\times\{L\}$ and $\Gamma_{4}=\{1\}\times(0,L)$, $w(t,x)=\begin{bmatrix}
    f(t,x), p(t,x)
\end{bmatrix}^{T}$, with the input $u(t)=\begin{bmatrix}
    u_{1}(t,x), 0
\end{bmatrix}^{T}$, as well as the unknown disturbance $d(t)$. $B=\begin{bmatrix}
    0&\beta_{f}\\ 0&\beta_{p}\\
\end{bmatrix}$, $\alpha=\begin{bmatrix}
   \alpha_{f}&0\\ 0&\alpha_{p}
\end{bmatrix}$, $\mathcal{M}=\begin{bmatrix}
   \gamma_{f}&-\gamma_{f}\\ -\gamma_{p}&\gamma_{p}
\end{bmatrix},$
and  $y_{m}$ is the measured output, $y_{r}$ the performance output signal to be regulated. The system \eqref{BBGL0.03} is discussed on  the boundary space $\mathcal{H}_{\Gamma_{i}}=[L^2(\Gamma_{i})]^{2}$, for all $i\in \{1,2,3,4\}$. We denote by  $\mathcal{H}_{s}^{m}=[H^{m}(\Omega)]^{2}$ the Sobolev space of order $m$ and $\mathcal{H}_{s\Gamma_{i}}^{m}=[H^{m}(\Gamma_{i})]^{2}$ the Sobolev space of order $m$ on the boundary for all $i\in \{1,2,3,4\}$.

 Our goal is to design an output feedback control for, uncertain system \eqref{BBGL0.03} to achieve output tracking $\|e(t)\|_{\mathcal{H}_{\Gamma_{3}}}=\|y_{r}(t)-r(t)\|_{\mathcal{H}_{\Gamma_{3}}}  \longrightarrow 0\quad,\text{as} \,\, t\to\infty$, for a reference signal  $r\in  W^{1,\infty}(0,\infty,\mathcal{H}_{\Gamma_{3}})$, and to reject the external disturbance.
We  propose a one-side  feedback law  to track a desired outlet temperature of the  DCMD system. The inlet temperatures of the system $T_{f}$ and $T_{p}$ in \eqref{glv1.2} are the measurements of the system. Moreover, the system is subject to unknown input heat flux disturbance $d(t)$ in the input of the system.
\subsection{State observer design}
We  design an extended state observer (ESO) that can estimate not only the state $w(t,x)$ of the controlled system \eqref{BBGL0.03}, but also the disturbance $d(t)$. The ESO is designed as follows;
\begin{equation}\label{BBGL0.04}
\begin{cases}
\displaystyle \partial_{t}\hat{w}(t,x)-\alpha\Delta \hat{w}(t,x)+B.\nabla \hat{w}(t,x)=0&\ t>0,\,\,\, x\in\Omega,\\
\displaystyle \frac{\partial \hat{w}(t,x)}{\partial \nu}=\mathcal{M}\hat{w}(t,x)&\ t>0,\,\,\, x\in\Gamma_{4},\\
\displaystyle \frac{\partial \hat{w}(t,x)}{\partial \nu}=u(t,x)& \ t>0,\,\,\, x\in \Gamma_{3},\\
\displaystyle \frac{\partial \hat{w}(t,x)}{\partial \nu}=0& \ t>0,\,\,\, x\in \Gamma_{2},\\
 \displaystyle \hat{w}(t,x)=y_{m}(t,x)& \ t>0,\,\,\, x\in \Gamma_{1},\\
 \displaystyle \hat{w}(0,x)=\hat{w}_{0}(x)&  x\in\Omega.
\end{cases}
\end{equation}
The error  $\tilde{w}(t,x)=\hat{w}(t,x)-w(t,x)$ is governed by
\begin{equation}\label{BBGL0.05}
\begin{cases}
\displaystyle \partial_{t}\tilde{w}(t,x)-\alpha\Delta \tilde{w}(t,x)+B.\nabla \tilde{w}(t,x)=0 &\ t>0,\,\,\, x\in\Omega,\\
\displaystyle \frac{\partial \tilde{w}(t,x)}{\partial \nu}=\mathcal{M}\tilde{w}(t,x)&\ t>0,\,\,\, x\in\Gamma_{4},\\
 \displaystyle \frac{\partial \tilde{w}(t,x)}{\partial \nu}(t,x)=0& \ t>0,\,\,\, x\in \Gamma_{3},\\
\displaystyle \frac{\partial \tilde{w}(t,x)}{\partial \nu}=0& \ t>0,\,\,\, x\in \Gamma_{2},\\
\displaystyle \tilde{w}(t,x)=0& \ t>0,\,\,\, x\in \Gamma_{1},\\
 \displaystyle \tilde{w}(0,x)=\tilde{w}_{0}(x)&  x\in\Omega.
\end{cases}
\end{equation}
Now, we give some boundary estimation on the error system.
\begin{lemma}\label{lemma1}
Let us assume $\tilde{w}_{0}\in\mathbf{X}$, the system \eqref{BBGL0.05} admits a unique solution $\tilde{w}\in C(0,\infty;\mathbf{X})$ and  there exists  $\delta_{0}>0$ such that
\[
\|\tilde{w}(t)\|_{\mathbf{X}}\leqslant e^{-\delta_{0}t}\|\tilde{w}_{0}\|_{\mathbf{X}},~~\forall t>0.
\]
Moreover, for any $\tau>0$ there exists $M_{0}>0$ depending on $\tau$ and $\delta_{0}>0$ such that
\begin{equation}\label{L1}
  \big\|\frac{\partial \tilde{w}}{\partial \nu}\big\|_{\mathcal{H}_{\Gamma_{1}}}\leqslant M_{0} e^{-\delta_{0}t}\|\tilde{w}_{0}\|_{\mathbf{X}}, \quad
  \big\| \tilde{w}\big\|_{\mathcal{H}_{\Gamma_{3}}}\leqslant M_{0} e^{-\delta_{0}t}\|\tilde{w}_{0}\|_{\mathbf{X}},~~\forall t>\tau.
\end{equation}
\end{lemma}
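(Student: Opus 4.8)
The plan is to recognize that system \eqref{BBGL0.05} is precisely the homogeneous (zero-input, zero-boundary-data) version of system \eqref{glv1.2}, so that the error $\tilde w$ is governed by the semigroup $\tilde{\mathcal T}_t$ generated by the operator $\tilde{\mathcal A}$ (or rather, the operator $\mathcal A$ of section~\ref{sec-exist-unic} after the change of variables \eqref{chgt-var}). First I would invoke the well-posedness and exponential stability already established: by the arguments of section~\ref{sec-exist-unic}, $\mathcal A$ is m-dissipative and, via Poincar\'e's inequality, satisfies $\mathcal A\le -C\,\mathrm I$, hence $\|\mathcal T_t\|\le e^{-Ct}$. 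Taking $\delta_0=C$ (or any positive constant below the spectral bound), this immediately gives $\tilde w\in C([0,\infty);\mathbf X)$ with $\|\tilde w(t)\|_{\mathbf X}\le e^{-\delta_0 t}\|\tilde w_0\|_{\mathbf X}$ for all $\tilde w_0\in\mathbf X$. This disposes of the first assertion with essentially no new work.

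The substantive part is the boundary estimate \eqref{L1}, i.e. bounding the trace of the normal derivative on $\Gamma_1$ and the trace of $\tilde w$ itself on $\Gamma_3$ by the same exponential, at the price of a constant $M_0=M_0(\tau)$ that blows up as $\tau\to0$. The natural route is a smoothing (parabolic regularity) argument: for an analytic semigroup generated by $\mathcal A$, one has $\tilde w(t)=\mathcal T_{t}\tilde w_0\in\mathcal D(\mathcal A)$ for every $t>0$, with the quantitative bound $\|\mathcal A\,\mathcal T_t\tilde w_0\|_{\mathbf X}\le (C'/t)\,e^{-\delta_0 t}\|\tilde w_0\|_{\mathbf X}$ coming from analyticity together with the decay of $\mathcal T_t$. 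Since $\mathcal D(\mathcal A)\subset \mathcal H^1_{\mathrm{bc}}$ continuously and, by the $H^2$-regularity proposition proved above, elements of $\mathcal D(\mathcal A)$ lie in $H^2(\Omega)\times H^2(\Omega)$ with norm controlled by $\|\mathcal A(\cdot)\|_{\mathbf X}+\|\cdot\|_{\mathbf X}$, we get $\|\tilde w(t)\|_{\mathcal H^2_s}\le M(t)\,e^{-\delta_0 t}\|\tilde w_0\|_{\mathbf X}$ for $t>0$. Then the standard trace theorem $H^2(\Omega)\hookrightarrow H^{3/2}(\partial\Omega)$ (for the value of $\tilde w$ on $\Gamma_3$) and $H^1(\partial\Omega)\hookrightarrow$ trace of $\nabla\tilde w$ on $\Gamma_1$ (which needs $H^2(\Omega)$ regularity of $\tilde w$, hence $H^{1/2}(\partial\Omega)$ for $\partial_\nu\tilde w$, which embeds in $L^2(\Gamma_1)$) yield the claimed bounds on $\mathcal H_{\Gamma_1}$ and $\mathcal H_{\Gamma_3}$. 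Restricting to $t>\tau$ absorbs the $1/t$-type factor into a single constant $M_0(\tau,\delta_0)$, possibly after replacing $\delta_0$ by a slightly smaller positive number to soak up the polynomial factor.

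The main obstacle I anticipate is justifying the analyticity of the semigroup and the associated smoothing estimate in this mixed-boundary-condition setting: the earlier sections establish m-dissipativity (hence a contraction $C_0$-semigroup) and, in the co-current diagonalizable case, an explicit spectral decomposition, but a clean statement of analyticity for the general operator $\mathcal A$ requires either the sectoriality of $\mathcal A$ (e.g. via a numerical-range / form argument showing $\mathcal A$ is associated with a bounded, coercive sesquilinear form, so that it generates an analytic semigroup) or exploiting the diagonalization of $\tilde{\mathcal A}$ to write $\tilde{\mathcal T}_t$ as a sum over eigenmodes with eigenvalues in a sector. I would take the form-based route: define $a(\cdot,\cdot)$ on $\mathcal H^1_{\mathrm{bc}}$ by the Dirichlet integral plus the boundary and convection terms, check that its imaginary part is dominated by its real part (the convection term $\beta_f\partial_y f$ is lower-order and controlled by $\varepsilon\|\nabla f\|^2+C_\varepsilon\|f\|^2$), conclude $\mathcal A$ is sectorial, and then the smoothing estimates are standard (see, e.g., the theory of analytic semigroups). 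Once analyticity is in hand, the rest is bookkeeping with trace inequalities and the already-proven exponential decay.
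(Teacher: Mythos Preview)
Your proposal is correct and follows essentially the same route as the paper: define the operator $\tilde{\mathcal A}$ governing \eqref{BBGL0.05}, invoke analyticity of the generated semigroup (the paper cites Pazy rather than running the form/sectoriality argument you outline), establish exponential stability (the paper does this via the Lyapunov function $V_0(t)=\tfrac12\alpha_p\gamma_p\|\tilde w_1\|^2+\tfrac12\alpha_f\gamma_f\|\tilde w_2\|^2$, which after Green's formula and Poincar\'e is exactly your $\mathcal A\le -C\,\mathrm I$ estimate), and then combine the analytic smoothing bound $\|\tilde{\mathcal A}\tilde S(t)\|\le C/t$ with the trace theorem and the splitting $\tilde S(t)=\tilde S(t-\tau)\tilde S(\tau)$ to absorb the $1/t$ factor into $M_0(\tau)$. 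The only cosmetic difference is that the paper iterates to $\|\tilde{\mathcal A}^m\tilde S(t)\|\le (Cm/t)^m$ and controls the $H^{2m}$ norm before applying the trace theorem, whereas you observe (correctly) that $H^2$ regularity already suffices for the $L^2$ boundary traces of $\tilde w$ and $\partial_\nu\tilde w$ required in \eqref{L1}; your version is slightly leaner here.
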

\begin{proof}
Let's introduce the operator
\[
\tilde{\mathcal{A}}=\Delta \phi+B.\nabla \phi,\,\,\forall \phi\in D\left(\tilde{\mathcal{A}}\right),
 \]
 and
 \[
  D(\tilde{\mathcal{A}})=\big\{\phi\in \mathbf{Z};\,\displaystyle \frac{\partial \phi}{\partial \nu}\big|_{\Gamma_{4}}=\mathcal{M}\phi;\frac{\partial \phi}{\partial \nu}\big|_{\Gamma_{2}}=\frac{\partial \phi}{\partial \nu}\big|_{\Gamma_{3}}=\phi|_{\Gamma_{1}}=0\big\}
  \]
From section \ref{sec-exist-unic}, by \cite[Theorem 2.7 p.211]{pazysemigroups} it is easy to show that $\tilde{\mathcal{A}}$ generates an analytical semigroup $\tilde{S}(t)$ which implies that \eqref{BBGL0.05} admits a unique solution 
\[
\tilde{w}=\tilde{S}\tilde{w}_{0}\in C(0,\infty;\mathbf{X}).
\]
Now, we show that the semi-group $\tilde{S}$ is exponentially stable. Indeed, we introduce the following Lyapunov function
$$V_{0}(t)=\frac{1}{2}\alpha_{p}\gamma_{p}\int_{\Omega}\tilde{w}_{1}(t,x)dx+\frac{1}{2}\alpha_{f}\gamma_{f}\int_{\Omega}\tilde{w}_{2}(t,x)dx.$$
Differentiating $V_{0}$ along the solution of \eqref{BBGL0.05} and using Green's formula and Poincar\'{e}e inequality we deduce that there exists $\delta>0$ such that
\[
\dot{V_{0}}(t)\leqslant -\delta V_{0}(t), 
\]
which gives the exponential stability of $\tilde{S}$, i.e.
\begin{equation}\label{L3}
  \|\tilde{w}(t)\|_{\mathbf{X}}= \|\tilde{S}(t)\tilde{w}_{0}\|_{\mathbf{X}}\leqslant e^{-\delta t} \|\tilde{w}_{0}\|_{\mathbf{X}}.
\end{equation}
Therefore, $\tilde{S}(t)$ is an analytic semigroup, \cite{pazysemigroups}, from any positive integer $m$, we have $\tilde{S}(t)\tilde{w}_{0}\in D(\tilde{\mathcal{A}})$ for all $t>0$ and there exists a constant $C>0$ such that
\begin{equation}\label{L4}
  \|\tilde{\mathcal{A}}\tilde{S}(t)\|\leqslant \frac{C}{t},\,\,\forall t>0.
\end{equation}
So, we obtain $\tilde{\mathcal{A}}^{m}\tilde{S}(t)\tilde{w}_{0}=\left(\tilde{\mathcal{A}}S\left(t/m\right)\right)^{m}\tilde{w}_{0} $, by \eqref{L4} it follows that
$$\begin{array}{ll}
    \|\Delta^{m}\tilde{w}(t,.)\|_{\mathbf{X}}& =\| \tilde{\mathcal{A}}^{m}\tilde{w}(t,.)\|_{\mathbf{X}}=\|(\tilde{\mathcal{A}}S(t/m))^{m}\tilde{w}_{0}\|_{\mathbf{X}}  \\
   &\displaystyle\leqslant \frac{C^{m}m^{m}}{t^{m}}\|\tilde{w}_{0}\|_{\mathbf{X}}.
\end{array}$$
Moreover, by using the Sobolev embedding theorem, there exists $C_{1}>0$ such that
$$\begin{array}{ll}
    \|\tilde{w}(t)\|_{\mathcal{H}_{s}^{2m}}& \leqslant C_{1}\left(\|\Delta^{m}\tilde{w}(t)\|_{\mathbf{X}}+\|\tilde{w}(t)\|_{\mathbf{X}}\right)  \\
   &\displaystyle\leqslant \left( \frac{C_{1}C^{m}m^{m}}{t^{m}}+C_{1}M_{0}e^{-\mu t}\right)\|\tilde{w}_{0}\|_{\mathbf{X}}.
\end{array}$$
The Sobolev trace theorem implies that
\begin{equation}\label{L1122}
  \big\|\frac{\partial \tilde{w}(t)}{\partial \nu}\big\|_{\mathcal{H}_{\Gamma_{3}}}\leqslant C_{3} \|\tilde{w}\|_{\mathcal{H}_{s}^{2m}},\quad  \big\|\frac{\partial \tilde{w}(t)}{\partial \nu}\big\|_{\mathcal{H}_{\Gamma_{1}}}\leqslant C_{3}\|\tilde{w}\|_{\mathcal{H}_{s}^{2m}},
\end{equation}
for some constant $C_{3}>0$. Therefore, by \eqref{L1} we get
\[
\begin{array}{ll}
    \|\Delta^{m}\tilde{w}(t)\|_{\mathbf{X}}& =\| \tilde{\mathcal{A}}^{m}\tilde{w}(t)\|_{\mathbf{X}}=\|\tilde{\mathcal{A}}^{m}\tilde{S}(t)\tilde{w}_{0}\|_{\mathbf{X}}  \\
   &\displaystyle=\|\tilde{S}(t-\tau)\tilde{\mathcal{A}}\tilde{S}(\tau)\tilde{w}_{0}\|_{\mathbf{X}} \\
    &\displaystyle\leqslant M e^{-\mu(t-\tau)}\|\tilde{\mathcal{A}}\tilde{S}(t)\tilde{w}_{0}\|_{\mathbf{X}},
\end{array}
\]
for any $\tau>0$. Finally, from \eqref{L1122} and the Sobolev embedding theorem, it follows the estimates in \eqref{L1}.
\end{proof}

\subsection{Servomechanism}
We design a servomechanism for system \eqref{BBGL0.03} in term of the reference signal $r(t,x)$.
\begin{equation}\label{BBGL0.06}
\begin{cases}
\displaystyle \partial_{t}v(t,x)-\alpha\Delta v(t,x)+B.\nabla v(t,x)=0  &\ t>0,\,\,\, x\in\Omega,\\
\displaystyle \frac{\partial v(t,x)}{\partial \nu}=\mathcal{M}v(t,x) &\ t>0,\,\,\, x\in\Gamma_{4},\\
\displaystyle v(t,x)=r(t)& \ t>0,\,\,\, x\in \Gamma_{3},\\
\displaystyle \frac{\partial v(t,x)}{\partial \nu}=0& \ t>0,\,\,\, x\in \Gamma_{2},\\
 \displaystyle v(t,x)=\hat{w}(t,x)& \ t>0,\,\,\, x\in \Gamma_{1},\\
 \displaystyle v(0,x)=v_{0}(x)&  x\in\Omega.
\end{cases}
\end{equation}
The error tracking is given by 
\begin{equation}\label{BBGL0.07}
\begin{aligned}
e(t,x)=y_{r}(t,x)-r(t,x)&={w(t,x)}_{|_{\Gamma_{3}}}-{v(t,x)}_{|_{\Gamma_{3}}}\\&=\left({w(t,x)}_{|_{\Gamma_{3}}}-{\hat{w}(t,x)}_{|_{\Gamma_{3}}}\right)+\left({\hat{w}(t,x)}_{|_{\Gamma_{3}}}-{v(t,x)}_{|_{\Gamma_{3}}}\right).
\end{aligned}
\end{equation}
Let us consider now the error equation $\eta(t,x)=v(t,x)-\hat{w}(t,x)$ between the state of servo-system and the state observer
\begin{equation}\label{BBGL0.071}
\begin{cases}
\displaystyle \partial_{t}\eta(t,x)-\alpha\Delta \eta (t,x)+B.\nabla \eta(t,x)=0  &\ t>0,\,\,\, x\in\Omega,\\
\displaystyle \frac{\partial \eta(t,x)}{\partial \nu}=\mathcal{M} \eta(t,x) &\ t>0,\,\,\, x\in\Gamma_{4},\\
\displaystyle \frac{\partial \eta (t,x)}{\partial \nu}=\frac{\partial v (t,x)}{\partial \nu}-u(t,x)& \ t>0,\,\,\, x\in \Gamma_{3},\\
\displaystyle \frac{\partial \eta (t,x)}{\partial \nu}=0& \ t>0,\,\,\, x\in \Gamma_{2},\\
 \displaystyle  \eta(t,x)=0& \ t>0,\,\,\, x\in \Gamma_{1},\\
 \displaystyle \eta (0,x)=\eta _{0}(x)&  x\in\Omega.
\end{cases}
\end{equation}
We assume that the output feedback control law
\begin{equation}\label{feedbacklaw1}
u_{1}(t,x)={\frac{\partial v_{1}(t,x)}{\partial \nu}}_{|_{\Gamma_{3}}}.
\end{equation}
Then, system \eqref{BBGL0.071} becomes
\begin{equation}\label{BBGL0.0721}
\begin{cases}
\displaystyle \partial_{t}\eta(t,x)-\alpha\Delta \eta (t,x)+B.\nabla \eta(t,x)=0 &\ t>0,\,\,\, x\in\Omega,\\
\displaystyle \frac{\partial \eta(t,x)}{\partial \nu}=\mathcal{M} \eta(t,x)&\ t>0,\,\,\, x\in\Gamma_{4},\\
 \displaystyle  \frac{\partial \eta(t,x)}{\partial \nu}=\left(0,\frac{\partial v_2 (t,x)}{\partial \nu}\right)& \ t>0,\,\,\, x\in \Gamma_{3},\\
\displaystyle \frac{\partial \eta (t,x)}{\partial \nu}=0& \ t>0,\,\,\, x\in \Gamma_{2},\\
\displaystyle \eta (t,x)=0& \ t>0,\,\,\, x\in \Gamma_{1},\\
 \displaystyle \eta (0,x)=\eta _{0}(x)&  x\in\Omega.
\end{cases}
\end{equation}
Now, we state the well-posedness and boundeness of the solution to system \eqref{BBGL0.06}.
\begin{lemma}\label{lemma31}
Suppose that $r\in W^{1,\infty}(0,\infty,\mathcal{H}_{\Gamma_{3}})$, $v_{0}\in\mathbf{X}$, then,  system \eqref{BBGL0.06} admits a unique solution $v\in C(0,\infty;\mathbf{X})$ which is uniformly bounded for all $t\geqslant 0$, i.e. $\sup_{t\geqslant 0}\|v(t)\|_{\mathbf{X}}<+\infty$. Moreover, for $r\in H^{1}(0,\infty, \mathcal{H}_{s\Gamma_{3}})$ we have $\lim_{t\to\infty}\|v(t)\|_{\mathbf{X}} =0$. 	
\end{lemma}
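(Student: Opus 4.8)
\emph{Proof idea.} The plan is to treat \eqref{BBGL0.06} exactly as the inhomogeneous system of Section~\ref{sec-exist-unic}: rewrite it as an abstract Cauchy problem on $\mathbf{X}$ in which the Dirichlet data $r(t)$ on $\Gamma_{3}$ and $\hat w(t)|_{\Gamma_{1}}$ on $\Gamma_{1}$ are regarded as boundary inputs. Using the Dirichlet lifting maps attached to the elliptic operator $\phi\mapsto\alpha\Delta\phi-B\cdot\nabla\phi$ on $\Omega$ with homogeneous Neumann condition on $\Gamma_{2}$ and Robin condition $\partial_{\nu}\phi=\mathcal{M}\phi$ on $\Gamma_{4}$ — the construction borrowed from \cite[Chap.~10]{TucWeiss} that is already used in Section~\ref{sec-exist-unic} — system \eqref{BBGL0.06} takes the form
\begin{equation*}
\dot v(t)=\mathcal{A}_{0}v(t)+\mathcal{B}_{3}r(t)+\mathcal{B}_{1}\bigl(\hat w(t)|_{\Gamma_{1}}\bigr),\qquad v(0)=v_{0},
\end{equation*}
where $\mathcal{A}_{0}$ is the realization of $\phi\mapsto\alpha\Delta\phi-B\cdot\nabla\phi$ with all four boundary conditions of \eqref{BBGL0.06} taken homogeneous ($\phi|_{\Gamma_{1}}=\phi|_{\Gamma_{3}}=0$, $\partial_{\nu}\phi|_{\Gamma_{2}}=0$, $\partial_{\nu}\phi|_{\Gamma_{4}}=\mathcal{M}\phi$), and $\mathcal{B}_{1}\in\mathcal{L}(\mathcal{H}_{\Gamma_{1}},\mathbf{X}_{-1})$, $\mathcal{B}_{3}\in\mathcal{L}(\mathcal{H}_{\Gamma_{3}},\mathbf{X}_{-1})$ are admissible control operators.

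I would next record that $\mathcal{A}_{0}$ generates an exponentially stable, analytic semigroup $S_{0}(t)$ on $\mathbf{X}$: m-dissipativity follows from the Green-formula identities of Appendix~\ref{deft-op-A} (the boundary integrals over $\Gamma_{1},\Gamma_{3}$ now vanish thanks to the homogeneous Dirichlet conditions, while the $\Gamma_{4}$ contribution is $\le0$ after the usual $(\alpha_{p}\gamma_{p},\alpha_{f}\gamma_{f})$-weighting), Poincar\'e's inequality upgrades it to $\mathcal{A}_{0}\le-\omega\Id$ for some $\omega>0$ so that $\|S_{0}(t)\|\le e^{-\omega t}$, and analyticity comes from \cite[Theorem~2.7, p.~211]{pazysemigroups}; this is the verbatim repetition of what was done for $\tilde{\mathcal{A}}$ in the proof of Lemma~\ref{lemma1}. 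Consequently the mild solution is given by variation of constants,
\begin{equation*}
v(t)=S_{0}(t)v_{0}+\int_{0}^{t}S_{0}(t-s)\bigl(\mathcal{B}_{3}r(s)+\mathcal{B}_{1}(\hat w(s)|_{\Gamma_{1}})\bigr)\,\d s,
\end{equation*}
and the Sobolev-in-time regularity of $r$ together with the corresponding regularity of the observer solution $\hat w$ make this a genuine continuous $\mathbf{X}$-valued function, i.e.\ $v\in C(0,\infty;\mathbf{X})$; uniqueness is automatic. For the uniform bound I would note that both inputs are bounded: $r\in W^{1,\infty}(0,\infty,\mathcal{H}_{\Gamma_{3}})$ by hypothesis, and $\hat w|_{\Gamma_{1}}=y_{m}$ is bounded because $\hat w=\tilde w+w$ with $\tilde w$ exponentially decaying (Lemma~\ref{lemma1}) and $w$ the uniformly bounded solution of \eqref{BBGL0.03} (obtained as in Section~\ref{sec-exist-unic}); admissibility of $\mathcal{B}_{1},\mathcal{B}_{3}$ combined with $\|S_{0}(t)\|\le e^{-\omega t}$ then yields, via the standard input-to-state estimate for an exponentially stable well-posed system, $\|v(t)\|_{\mathbf{X}}\le e^{-\omega t}\|v_{0}\|_{\mathbf{X}}+C\bigl(\|r\|_{L^{\infty}}+\sup_{s}\|\hat w(s)|_{\Gamma_{1}}\|\bigr)$, hence $\sup_{t\ge0}\|v(t)\|_{\mathbf{X}}<\infty$.

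For the convergence to $0$, I would use the stronger hypothesis $r\in H^{1}(0,\infty,\mathcal{H}_{s\Gamma_{3}})$, which gives $r\in L^{2}(0,\infty;\mathcal{H}_{\Gamma_{3}})$ and $r(t)\to0$; correspondingly $\hat w(t)|_{\Gamma_{1}}\to0$ and is square-integrable in time, since $\tilde w$ decays exponentially and, in the regime in which this result is invoked, $w$ is then driven to $0$. Writing $g:=\mathcal{B}_{3}r+\mathcal{B}_{1}(\hat w|_{\Gamma_{1}})$ and splitting $\int_{0}^{t}S_{0}(t-s)g(s)\,\d s$ into its restrictions to $[0,t/2]$ and $[t/2,t]$, the first piece equals $S_{0}(t/2)\int_{0}^{t/2}S_{0}(t/2-s)g(s)\,\d s$ and is therefore bounded by $Ce^{-\omega t/2}\|g\|_{L^{2}(0,\infty)}$, while the second, by the translation invariance of the input-to-state map, is bounded by $C\|g\|_{L^{2}(t/2,\infty)}$; both tend to $0$, and since $S_{0}(t)v_{0}\to0$ we conclude $\|v(t)\|_{\mathbf{X}}\to0$.

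I expect the main obstacle to be the low spatial regularity of the boundary data: $r(t)$ lies only in $[L^{2}(\Gamma_{3})]^{2}$ and $\hat w(t)|_{\Gamma_{1}}$ only in $[L^{2}(\Gamma_{1})]^{2}$, which is below the $H^{1/2}$ trace regularity that a naive $H^{1}(\Omega)$-lifting followed by a direct energy estimate would demand; this forces the argument through the boundary-control-system formalism and the admissibility of $\mathcal{B}_{1},\mathcal{B}_{3}$, exactly as Section~\ref{sec-exist-unic} does for the inlet temperatures $T_{f},T_{p}$. A secondary, more bookkeeping-type point is to justify rigorously the boundedness — and, for the second statement, the decay — of the observer trace $\hat w|_{\Gamma_{1}}$; this follows from the well-posedness of \eqref{BBGL0.04}, Lemma~\ref{lemma1}, and the asymptotic analysis of \eqref{BBGL0.03} in Section~\ref{sec-exist-unic}, with the decay resting on the disturbance being absent, consistently with the standing assumption of the tracking theorem.
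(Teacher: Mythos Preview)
Your outline is sound, but it takes a genuinely different route from the paper's proof and leaves one nontrivial point unsupported.

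\textbf{What the paper does.} The paper does \emph{not} work with unbounded control operators $\mathcal{B}_{1},\mathcal{B}_{3}\in\mathcal{L}(U,\mathbf{X}_{-1})$ and admissibility. Instead it splits $v=\rho_{1}+\rho_{2}$ (one piece carrying the $\Gamma_{3}$-datum $r$, the other the $\Gamma_{1}$-datum $y_{m}=\hat w|_{\Gamma_{1}}$), and for each piece applies the \emph{Dirichlet lifting}: e.g.\ $z(t)=\Lambda_{1}r(t)$ solves the stationary elliptic problem with $z|_{\Gamma_{3}}=r(t)$, and $\Lambda_{1}\in\mathcal{L}(\mathcal{H}_{\Gamma_{3}},\mathcal{H}^{1/2}_{s})$ by Lions--Magenes. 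The point is that $\bar v_{1}:=\rho_{1}-z$ then satisfies a problem with \emph{homogeneous} boundary conditions and a \emph{bounded} interior source $-z_{t}=-\Lambda_{1}r_{t}$, which belongs to $L^{\infty}(0,\infty;\mathbf{X})$ when $r\in W^{1,\infty}$ and to $L^{2}(0,\infty;\mathbf{X})$ when $r\in H^{1}$. From there the uniform bound and the decay follow from the exponential stability of the homogeneous semigroup $e^{\bar{\mathcal A}t}$ via the ordinary Duhamel estimate and \cite[Lemma~1.1]{zhou2018performance}; no admissibility argument is needed because the effective control operator is the identity on $\mathbf{X}$.

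\textbf{Comparison.} Your abstract formulation is correct in principle, and your $t/2$-splitting for the decay is exactly the mechanism behind the cited lemma. But the step ``admissibility of $\mathcal{B}_{1},\mathcal{B}_{3}$ combined with $\|S_{0}(t)\|\le e^{-\omega t}$ then yields $\|v(t)\|_{\mathbf{X}}\le e^{-\omega t}\|v_{0}\|+C\|r\|_{L^{\infty}}+\dots$'' is not automatic: $L^{2}$-admissibility gives $L^{2}$-in-time input-to-state bounds, not the $L^{\infty}\to L^{\infty}$ bound you write, and Section~\ref{sec-exist-unic} only establishes the boundary-control structure for \emph{constant} inputs, not admissibility. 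The Dirichlet-lifting trick is precisely how the paper sidesteps this: it trades the low \emph{spatial} regularity of $r$ for the available \emph{temporal} regularity $r_{t}\in L^{\infty}$ (resp.\ $L^{2}$), after which everything reduces to a bounded perturbation of an exponentially stable semigroup. If you want to keep your formulation, you should either prove admissibility of Dirichlet control for this parabolic system (true, but a separate argument) or, more simply, perform the lifting yourself; the latter is both shorter and what the paper does.
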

\begin{proof}
In order to give the well posedness we separate the servo system \eqref{BBGL0.06} into tow subsystem $\rho_{1}$ and $\rho_{2}$ described respectively by
\begin{equation}\label{eqv11}
\begin{cases}
\displaystyle \partial_{t}\rho_{1}(t,x)-\alpha\Delta \rho_{1}(t,x)+B.\nabla \rho_{1}(t,x)=0&\ t>0,\,\,\, x\in\Omega,\\
\displaystyle \frac{\partial \rho_{1}(t,x)}{\partial \nu}=\mathcal{M}\rho_{1}(t,x)&\ t>0,\,\,\, x\in\Gamma_{4},\\
\displaystyle \rho_{1}(t,x)=r(t,x)& \ t>0,\,\,\, x\in \Gamma_{3},\\
\displaystyle \frac{\partial \rho_{1}(t,x)}{\partial \nu}=0& \ t>0,\,\,\, x\in \Gamma_{2},\\
\displaystyle \rho_{1}(t,x)=0& \ t>0,\,\,\, x\in \Gamma_{1},\\
 \displaystyle \rho_{1}(0,x)=\rho_{10}(x)&  x\in\Omega,\\
\end{cases}
\end{equation}
and
\begin{equation}\label{eqv22}
\begin{cases}
\displaystyle \partial_{t}\rho_{2}(t,x)-\alpha\Delta \rho_{2}(t,x)+B.\nabla \rho_{2}(t,x)=0&\ t>0,\,\,\, x\in\Omega,\\
\displaystyle \frac{\partial \rho_{2}(t,x)}{\partial \nu}=\mathcal{M}\rho_{2}(t,x)&\ t>0,\,\,\, x\in\Gamma_{4},\\
\displaystyle \rho_{2}(t,x)=0& \ t>0,\,\,\, x\in \Gamma_{3},\\
\displaystyle \frac{\partial \rho_{2}(t,x)}{\partial \nu}=0& \ t>0,\,\,\, x\in \Gamma_{2},\\
\displaystyle \rho_{2}(t,x)=y_{m}(t)& \ t>0,\,\,\, x\in \Gamma_{1},\\
 \displaystyle \rho_{2}(0,x)=\rho_{20}(x)&  x\in\Omega.
\end{cases}
\end{equation}
First, we introduce the Dirichlet map $\Lambda_{1}\in \mathcal{L}(   \mathcal{H}_{\Gamma_{3}};   \mathcal{H}_{s}^{\frac{1}{2}})$ see \cite[pages 188 ]{lions2012non}
\begin{equation*}
\begin{cases}
\displaystyle -\alpha\Delta z+B.\nabla z=0&\  t>0,x\in\Omega,\\
\displaystyle \frac{\partial z}{\partial \nu}=\mathcal{M}z&\ t>0,\,\,\, x\in\Gamma_{4},\\
\displaystyle z=r(t,x)& \ t>0,\,\,\, x\in \Gamma_{3},\\
\displaystyle \frac{\partial z}{\partial \nu}=0& \ t>0, \,\,\, x\in \Gamma_{2},\\
\displaystyle z=0& \ t>0,\,\,\, x\in \Gamma_{1},\\
\end{cases}
\end{equation*}
for $r\in W^{1,\infty}(0,\infty;\mathcal{H}_{\Gamma_{3}})$ then $z\in W^{1,\infty}(0,\infty; \mathcal{H}_{s}^{\frac{1}{2}})$. Moreover, we have
\begin{equation*}
  \|z(t)\|_{\mathcal{H}_{s}^{\frac{1}{2}}}\leqslant C_{1}\|r(t)\|_{\mathcal{H}_{\Gamma_{3}}},\quad \|z_{t}(t)\|_{\mathcal{H}_{s}^{\frac{1}{2}}}\leqslant C_{2}\|r_t (t)\|_{\mathcal{H}_{\Gamma_{3}}},\,\,t\geqslant 0.
\end{equation*}
From the Sobolev embedding theorem, it follows that
\begin{equation}\label{sourcebound1}
  \|z_{t}(t)\|_{\mathbf{X}}\leqslant C_{2}\|r(t)\|_{\mathbf{X}}, \,\,t\geqslant 0.
\end{equation} 
Now, by the Dirichlet map $\Lambda_{1}$ we introduce $\bar{v}_{1}(t,x)=\rho_{1}(t,x)-z(t,x)$, then $\bar{v}_{1}$ satisfies
\begin{equation*}
\begin{cases}
\displaystyle \partial_{t}\bar{v}_{1}(t,x)-\alpha\Delta \bar{v}_{1}(t,x)+B.\nabla \bar{v}_{1}(t,x)=-z_{t}(t,x)&\ t>0,\,\,\, x\in\Omega,\\
\displaystyle \frac{\partial \bar{v}_{1}(t,x)}{\partial \nu}=\mathcal{M}\bar{v}_{1}(t,x)&\ t>0,\,\,\, x\in\Gamma_{4},\\
\displaystyle \bar{v}_{1}(t,x)=0& \ t>0,\,\,\, x\in \Gamma_{3},\\
\displaystyle \frac{\partial \bar{v}_{1}(t,x)}{\partial \nu}=0& \ t>0,\,\,\, x\in \Gamma_{2},\\
\displaystyle \bar{v}_{1}(t,x)=0& \ t>0,\,\,\, x\in \Gamma_{1},\\
 \displaystyle \bar{v}_{1}(0,x)=\bar{v}_{01}(x)&  x\in\Omega,\\
\end{cases}
\end{equation*}
can be written as
\begin{equation}\label{diffform1}
 \partial_{t}\bar{v}_{1}(t,.)=\bar{\mathcal{A}}\bar{v}_{1}(t,.)+B_{1}z_{t}(t,.),
\end{equation}

 where $B_{1}=-I$ and the operator $\bar{\mathcal{A}}$ is given by
 \[
\bar{\mathcal{A}}=\Delta \phi+B.\nabla \phi,\,\,\forall \phi\in D(\bar{\mathcal{A}}),
 \]
 and
 \[
  D(\bar{\mathcal{A}})=\big\{\phi\in \mathbf{Z};\,\displaystyle \frac{\partial \phi}{\partial \nu}\big|_{\Gamma_{4}}=\mathcal{M}\phi;\frac{\partial \phi}{\partial \nu}\big|_{\Gamma_{2}}=0;\phi|_{\Gamma_{1}}=\phi|_{\Gamma_{3}}=0\big\}.
  \]
 It is easy to show that $\bar{\mathcal{A}}$ generates an exponentially stable $C_{0}$-semigroup $e^{\bar{\mathcal{A}}}$. Then using \eqref{sourcebound1} and \eqref{diffform1} it follows that the system \eqref{eqv11} has a unique solution $\rho_{1}\in C(0,\infty; \mathbf{X})$. Moreover, using \cite[Lemma 1.1]{zhou2018performance}, we conclude that   
 \begin{equation}\label{Estv1}
 \displaystyle\lim_{t\rightarrow \infty}\|\rho_{1}(t)\|_{\mathbf{X}}=0.
 \end{equation}
 By the same argument we introduce $\Lambda_{2}: \mathcal{L}(   \mathcal{H}_{s\Gamma_{1}};   \mathcal{H}_{s}^{\frac{1}{2}})$. Then
  \begin{equation}\label{mapDir2}
\begin{cases}
\displaystyle -\alpha\Delta y+B.\nabla y=0&\  x\in\Omega,\\
\displaystyle \frac{\partial y}{\partial \nu}=\mathcal{M}.y&\ \,\,\, x\in\Gamma_{4},\\
\displaystyle y=0& \ \,\,\, x\in \Gamma_{3},\\
\displaystyle \frac{\partial y}{\partial \nu}=0& \ \,\,\, x\in \Gamma_{2},\\
\displaystyle y=y_{m}& \ \,\,\, x\in \Gamma_{1},\\
\end{cases}
\end{equation}
with   $y_{m}\in W^{1,\infty}(0,\infty;\mathcal{H}_{s\Gamma_{1}})$, then $y\in W^{1,\infty}(0,\infty;\mathcal{H}_{s}^{\frac{1}{2}})$. Finally, the system \eqref{eqv22} has a unique solution, $\rho_{2}\in C(0,\infty;\mathbf{X})$ and by  \cite[Lemma 1.1]{zhou2018performance}, we have 
 \begin{equation}\label{Estv2}
  \lim_{t\rightarrow\infty}\|\rho_{2}(t)\|_{\mathbf{X}}=0.
 \end{equation}
  From \eqref{Estv1} and \eqref{Estv2}, we have 
 \begin{equation}
  \lim_{t\rightarrow\infty}\|v(t)\|_{\mathbf{X}}=0.
 \end{equation}
  This finish the proof.
\end{proof}

\begin{lemma}\label{lemma21}
Let us assume $\eta_{0}\in\mathbf{X}$, the system \eqref{BBGL0.05} admits a unique solution $\eta\in C(0,\infty;\mathbf{X})$ such that
$$\|\eta(t)\|_{\mathbf{X}}\leqslant e^{-t}\|\eta_{0}\|_{\mathbf{X}}+C\|v(t)\|_{\mathbf{X}},~~\forall t>0,$$
for a constant $C>0$. Moreover, for any $\tau>0$ there exist $M_{0}>0$ depending on $\tau$ and $\delta_{0}>0$  such that
\begin{equation}\label{L1121}
  \big\|\frac{\partial \eta}{\partial \nu}\big\|_{\mathcal{H}_{\Gamma_{1}}}\leqslant M_{0} e^{-\delta_{0}t}\|\eta_{0}\|_{\mathbf{X}}+C_{1}\|v(t)\|_{\mathbf{X}},\quad \big\| \eta\big\|_{\mathcal{H}_{\Gamma_{3}}}\leqslant M_{0} e^{-\delta_{0}t}\|\eta_{0}\|_{\mathbf{X}}+C_{1}\|v(t)\|_{\mathbf{X}},
\end{equation}
for all $t>\tau$ and for a positive constant $C_{1}$.
\end{lemma}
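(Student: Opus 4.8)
The plan is to treat~\eqref{BBGL0.0721} exactly like the observer error system of Lemma~\ref{lemma1}, the only novelty being the inhomogeneous Neumann datum $(0,\partial_\nu v_2)$ that the servomechanism state injects on $\Gamma_3$.

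First I would put the problem in abstract form. Let $\mathcal A_\eta$ be the operator acting by $\mathcal A_\eta\phi=\alpha\Delta\phi-B\cdot\nabla\phi$ on $D(\mathcal A_\eta)=\{\phi\in\mathbf Z:\ \partial_\nu\phi|_{\Gamma_4}=\mathcal M\phi,\ \partial_\nu\phi|_{\Gamma_2}=\partial_\nu\phi|_{\Gamma_3}=0,\ \phi|_{\Gamma_1}=0\}$; as in Lemma~\ref{lemma1}, \cite[Theorem~2.7, p.~211]{pazysemigroups} together with the analysis of Section~\ref{sec-exist-unic} shows that $\mathcal A_\eta$ generates an analytic $C_0$-semigroup $S_\eta(t)$. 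Let $N\in\mathcal L(\mathcal H_{\Gamma_3};\mathbf Z)$ be the Neumann lifting sending $\varphi$ to the solution of $-\alpha\Delta z+B\cdot\nabla z=0$ in $\Omega$, $\partial_\nu z=\mathcal Mz$ on $\Gamma_4$, $\partial_\nu z=\varphi$ on $\Gamma_3$, $\partial_\nu z=0$ on $\Gamma_2$, $z=0$ on $\Gamma_1$ (well posed and $\mathbf Z$-valued by the invertibility of $A$ and the elliptic regularity quoted in Section~\ref{sec-exist-unic}). Then $\bar\eta:=\eta-N(0,\partial_\nu v_2)$ solves $\dot{\bar\eta}=\mathcal A_\eta\bar\eta-N(0,\partial_\nu\partial_t v_2)$ with $\bar\eta(0)=\eta_0-N(0,\partial_\nu v_2(0))$, so that standard semigroup theory yields a unique mild solution and hence a unique $\eta\in C(0,\infty;\mathbf X)$; an integration by parts in time puts it in the form $\eta(t)=S_\eta(t)\eta_0-\int_0^t\mathcal A_\eta S_\eta(t-s)N(0,\partial_\nu v_2(s))\,\d s$, the kernel being integrable after factoring $\mathcal A_\eta=\mathcal A_\eta^{1/2}\mathcal A_\eta^{1/2}$ and using $\mathbf Z\hookrightarrow D(\mathcal A_\eta^{1/2})$.

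Next, for the $\mathbf X$-estimate I would run the Lyapunov argument of Lemma~\ref{lemma1}: differentiating $V(t)=\tfrac12\alpha_p\gamma_p\|\eta_1\|_{L^2(\Omega)}^2+\tfrac12\alpha_f\gamma_f\|\eta_2\|_{L^2(\Omega)}^2$ along~\eqref{BBGL0.0721}, Green's formula turns the $\Gamma_4$ contribution into $-\alpha_f\gamma_f\alpha_p\gamma_p\int_{\Gamma_4}(\eta_1-\eta_2)^2$, the convection terms produce $-\tfrac12\alpha_p\gamma_p\beta_f\int_{\Gamma_3}\eta_1^2-\tfrac12\alpha_f\gamma_f\beta_p\int_{\Gamma_3}\eta_2^2$, and the only indefinite term, $\alpha_f\gamma_f\alpha_p\int_{\Gamma_3}\eta_2\,\partial_\nu v_2$, is absorbed into the last of these by Young's inequality, so that Poincar\'e's inequality gives $\dot V\le-\delta V+K\|\partial_\nu v_2\|_{\mathcal H_{\Gamma_3}}^2$. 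Gronwall's lemma, the bound $\|\partial_\nu v_2(t)\|_{\mathcal H_{\Gamma_3}}\le C\|v(t)\|$, and \cite[Lemma~1.1]{zhou2018performance} to dispose of the resulting convolution (as was done for $\rho_1,\rho_2$ in Lemma~\ref{lemma31}) then give $\|\eta(t)\|_{\mathbf X}\le e^{-t}\|\eta_0\|_{\mathbf X}+C\|v(t)\|_{\mathbf X}$. For the boundary bounds~\eqref{L1121} I would argue on the representation $\eta(t)=S_\eta(t)\eta_0-\int_0^t\mathcal A_\eta S_\eta(t-s)N(0,\partial_\nu v_2(s))\,\d s$ exactly as in the final part of the proof of Lemma~\ref{lemma1}: analyticity gives $\|\mathcal A_\eta^mS_\eta(t)\|\le(Cm/t)^m$, so for $t>\tau$ Sobolev embedding bounds $\|S_\eta(t)\eta_0\|_{\mathcal H_s^{2m}}$ by $M_0e^{-\delta_0 t}\|\eta_0\|_{\mathbf X}$ while the forcing term is controlled in $\mathcal H_s^{2m}$ by $C_1\|v(t)\|_{\mathbf X}$ via the boundedness of $N$ and the estimate just obtained; the Sobolev trace theorem ($\|\partial_\nu\eta\|_{\mathcal H_{\Gamma_1}}\lesssim\|\eta\|_{\mathcal H_s^{2}}$ and $\|\eta\|_{\mathcal H_{\Gamma_3}}\lesssim\|\eta\|_{\mathcal H_s^{1}}$) then yields~\eqref{L1121}.

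The step I expect to be the real obstacle is controlling $\partial_\nu v_2|_{\Gamma_3}$: the servomechanism carries on $\Gamma_3$ only the Dirichlet datum $r\in W^{1,\infty}(0,\infty;\mathcal H_{\Gamma_3})$, so $v(t,\cdot)$ is a priori too rough for its Neumann trace on $\Gamma_3$ to be defined, let alone bounded by $\|v(t)\|_{\mathbf X}$; one must therefore invoke the extra interior-and-boundary regularity of $v$ produced by the Dirichlet maps $\Lambda_1,\Lambda_2$ of Lemma~\ref{lemma31} together with an interpolation/trace argument to make the bound $\|\partial_\nu v_2(t)\|_{\mathcal H_{\Gamma_3}}\le C\|v(t)\|_{\mathbf X}$ legitimate. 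Everything else is a transcription of computations already performed for Lemmas~\ref{lemma1} and~\ref{lemma31}.
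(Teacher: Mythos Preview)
Your approach is exactly what the paper does: its entire proof of this lemma is the single sentence ``The proof is similar to the proof of Lemmas~\ref{lemma1} and~\ref{lemma31}'', so your detailed transcription of those two arguments---analytic semigroup for the homogeneous part, Lyapunov/Poincar\'e for decay, smoothing plus trace for the boundary norms, and a lifting to absorb the inhomogeneous Neumann datum on~$\Gamma_3$---is precisely the intended route.

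Your closing caveat is well taken and in fact goes beyond what the paper supplies. The estimate hinges on controlling $\partial_\nu v_2|_{\Gamma_3}$ in terms of $\|v(t)\|_{\mathbf X}$, yet $v$ is produced by Lemma~\ref{lemma31} only in $C(0,\infty;\mathbf X)$ from Dirichlet data $r\in W^{1,\infty}(0,\infty;\mathcal H_{\Gamma_3})$, which does not by itself give a Neumann trace on the same boundary piece, let alone one dominated by the $L^2$ norm of $v$. The paper does not address this point; your suggestion to upgrade the regularity of $v$ via the Dirichlet maps $\Lambda_1,\Lambda_2$ and parabolic smoothing is the natural fix, but be aware that the resulting bound will more plausibly be of the form $C\|v(t)\|_{\mathcal H^1_s}$ (or involve $\|r\|_{W^{1,\infty}}$ and $\|y_m\|_{W^{1,\infty}}$) rather than $C\|v(t)\|_{\mathbf X}$, which may require a slight reformulation of the stated inequality.
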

\begin{proof}
The proof is similar to the proof of Lemmas \ref{lemma1} and \ref{lemma31}
\end{proof}
Now, we are able to give an asymptotical estimation of the disturbance $d(t)$. From the error system, we obtain the following
\[
\frac{\partial \tilde{w}}{\partial \nu}(t,x)=\frac{\partial \hat{w}}{\partial \nu}(t,x)-\frac{\partial w}{\partial \nu}(t,x)=\frac{\partial \hat{w}}{\partial \nu}(t,x)-d(t)\quad \forall t\geqslant 0,\,\,\,x\in \Gamma_{1}.
\]
Consequently, from \eqref{L1} and \eqref{L1121}, the disturbance can be estimated by 
\begin{equation}\label{distApp}
d\approx\frac{\partial \hat{w}}{\partial \nu},\quad\text{for a large value of}\,\, t.
\end{equation}

\subsection{Control design and closed-loop system}
Now, we analyze the performance output tracking to  the closed loop system of \eqref{BBGL0.03}. Using the feedback law \eqref{feedbacklaw1},  the closed loop is given by 
\begin{equation}\label{closedloopsystem1}
\begin{cases}
\displaystyle \partial_{t}w(t,x)-\alpha\Delta w(t,x)+B.\nabla w(t,x)=0&\ t>0,\,\,\, x\in\Omega,\\
\displaystyle \frac{\partial w(t,x)}{\partial \nu}=\mathcal{M}w(t,x)&\ t>0,\,\,\, x\in\Gamma_{4},\\
\displaystyle \frac{\partial w(t,x)}{\partial \nu}=\left(\frac{\partial v_1 (t,x)}{\partial \nu},0\right)&\ t>0,\,\,\, x\in \Gamma_{3},\\
\displaystyle \frac{\partial w(t,x)}{\partial \nu}=0& \ t>0,\,\,\, x\in \Gamma_{2},\\
\displaystyle \frac{\partial w(t,x)}{\partial \nu}=d(t)&\ t>0,\,\,\, x\in \Gamma_{1},\\
 \displaystyle \partial_{t}\hat{w}(t,x)-\alpha\Delta \hat{w}(t,x)+B.\nabla \hat{w}(t,x)=0&\ t>0,\,\,\, x\in\Omega,\\
\displaystyle \frac{\partial \hat{w}(t,x)}{\partial \nu}=\mathcal{M}\hat{w}(t,x)&\ t>0,\,\,\, x\in\Gamma_{4},\\
\displaystyle \frac{\partial \hat{w}(t,x)}{\partial \nu}=\left(\frac{\partial v_1 (t,x)}{\partial \nu},0\right)& \ t>0,\,\,\, x\in \Gamma_{3},\\
\displaystyle \frac{\partial \hat{w}(t,x)}{\partial \nu}=0& \ t>0,\,\,\, x\in \Gamma_{2},\\
 \displaystyle \hat{w}(t,x)=y_{m}(t,x)& \ t>0,\,\,\, x\in \Gamma_{1},\\
\displaystyle \partial_{t}v(t,x)-\alpha\Delta v(t,x)+B.\nabla v(t,x)=0  &\ t>0,\,\,\, x\in\Omega,\\
\displaystyle \frac{\partial v(t,x)}{\partial \nu}=\mathcal{M}v(t,x) &\ t>0,\,\,\, x\in\Gamma_{4},\\
\displaystyle v(t,x)=r(t)& \ t>0,\,\,\, x\in \Gamma_{3},\\
\displaystyle \frac{\partial v(t,x)}{\partial \nu}=0& \ t>0,\,\,\, x\in \Gamma_{2},\\
\displaystyle v(t,x)=\hat{w}(t,x)& \ t>0,\,\,\, x\in \Gamma_{1},\\
\displaystyle w(0,x)=w_{0}(x)&  x\in\Omega,\\
\displaystyle \hat{w}(0,x)=\hat{w}_{0}(x)&  x\in\Omega,\\
\displaystyle v(0,x)=v_{0}(x)&  x\in\Omega.
\end{cases}
\end{equation}

\begin{thm}
Let $r\in W^{1,\infty}(0,\infty,\mathcal{H}_{\Gamma_{3}})$,  for any initial value $(w_{0},\hat{w}_{0},v_{0})\in \mathbf{X}^{3}$, the closed-loop system \eqref{closedloopsystem1} admits a unique solution $(w,\hat{w},v) \in C(0; \infty;\mathbf{X}^{3})$ such that 
\begin{enumerate}
\item \[\sup_{t\geqslant 0}\left(\|\hat{w}(t)\|_{\mathbf{X}}+\|w(t)\|_{\mathbf{X}}+\|v(t)\|_{\mathbf{X}}\right)<+\infty, \quad \forall t \geqslant 0\]

\item There exists a constant $M$ depending on the initial data $\left(w_{0}, \hat{w}_{0}\right)$ and $\mu>0$ such that 
\[\|w(t)-\hat{w}(t)\|_{\mathbf{X}}\le Me^{-\mu t},\quad \forall t\geqslant 0,\]
and for any $\tau>0$, we have
\[\|e(t)\|_{\mathbf{X}}\le M_{1}e^{-\mu_{1} t},\quad \forall t\geqslant 0,\quad \forall t\geqslant 0,\] 
where $M_{1}$ depending on the initial data $\left(w_{0}, \hat{w}_{0},v_{0}\right)$ and a constant $\mu_{1}>0$. 

\item Moreover, for $r\in H^{1}(0,\infty,\mathcal{H}_{\Gamma_{3}})$ we have the asymptotically stability of system \eqref{closedloopsystem1}
\begin{equation}\label{EstimationCnvergence1}
\lim_{t\to\infty}\left(\|\hat{w}(t)\|_{\mathbf{X}}+\|w(t)\|_{\mathbf{X}}+\|v(t)\|_{\mathbf{X}}\right)=0.
\end{equation}
\end{enumerate}
\end{thm}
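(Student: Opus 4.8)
The plan is to reduce the closed-loop system~\eqref{closedloopsystem1} to the three subsystems already analysed --- the observer error~\eqref{BBGL0.05}, the servomechanism~\eqref{BBGL0.06}, and the servo/observer error~\eqref{BBGL0.0721} --- and then to assemble the three assertions by the triangle inequality. First I would introduce the error variables $\tilde w=\hat w-w$ and $\eta=v-\hat w$. Subtracting the relevant equations in~\eqref{closedloopsystem1}, and using that the feedback term $u_{1}=\partial_{\nu}v_{1}|_{\Gamma_{3}}$ cancels in $\hat w-w$, one checks that $\tilde w$ solves the autonomous system~\eqref{BBGL0.05} with $\tilde w(0)=\hat w_{0}-w_{0}$, that $\eta$ solves~\eqref{BBGL0.0721} with $\eta(0)=v_{0}-\hat w_{0}$ (here $\partial_{\nu}(v-\hat w)=(0,\partial_{\nu}v_{2})$ on $\Gamma_{3}$ once the feedback is inserted), and that $v$ solves the servomechanism~\eqref{BBGL0.06}, its $\Gamma_{1}$ datum $\hat w|_{\Gamma_{1}}=y_{m}=w|_{\Gamma_{1}}$ being consistent because $\tilde w|_{\Gamma_{1}}=\eta|_{\Gamma_{1}}=0$. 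Conversely $(w,\hat w,v)=(v-\eta-\tilde w,\,v-\eta,\,v)$, and a direct check --- in particular $\partial_{\nu}(v-\eta)=(\partial_{\nu}v_{1},0)$ on $\Gamma_{3}$ --- shows that this triple solves~\eqref{closedloopsystem1}. Existence, uniqueness and the membership $(w,\hat w,v)\in C([0,\infty);\mathbf X^{3})$ then follow from Lemmas~\ref{lemma1}, \ref{lemma31} and~\ref{lemma21}.

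For assertion~1, I would use $w=v-\eta-\tilde w$ and $\hat w=v-\eta$ to obtain $\|w(t)\|_{\mathbf X}\le\|\tilde w(t)\|_{\mathbf X}+\|\eta(t)\|_{\mathbf X}+\|v(t)\|_{\mathbf X}$ and $\|\hat w(t)\|_{\mathbf X}\le\|\eta(t)\|_{\mathbf X}+\|v(t)\|_{\mathbf X}$, and then bound the three pieces separately: $\|\tilde w(t)\|_{\mathbf X}\le e^{-\delta_{0}t}\|\hat w_{0}-w_{0}\|_{\mathbf X}$ by Lemma~\ref{lemma1}; $\sup_{t\ge0}\|v(t)\|_{\mathbf X}<+\infty$ by Lemma~\ref{lemma31}; and $\|\eta(t)\|_{\mathbf X}\le e^{-t}\|\eta_{0}\|_{\mathbf X}+C\|v(t)\|_{\mathbf X}$ by Lemma~\ref{lemma21}. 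Adding these gives the uniform bound of assertion~1.

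For assertion~2, the estimate $\|w(t)-\hat w(t)\|_{\mathbf X}=\|\tilde w(t)\|_{\mathbf X}\le Me^{-\mu t}$ is exactly Lemma~\ref{lemma1}, with $M=\|\hat w_{0}-w_{0}\|_{\mathbf X}$ and $\mu=\delta_{0}$. For the tracking error I would use $v|_{\Gamma_{3}}=r$ in~\eqref{BBGL0.07}, which gives $e(t)=w(t)|_{\Gamma_{3}}-r(t)=-\bigl(\tilde w(t)+\eta(t)\bigr)\big|_{\Gamma_{3}}$, so that the boundary-trace estimates~\eqref{L1} and~\eqref{L1121} yield, for every $\tau>0$ and $t>\tau$,
\[
\|e(t)\|_{\mathcal H_{\Gamma_{3}}}\le M_{0}e^{-\delta_{0}t}\bigl(\|\hat w_{0}-w_{0}\|_{\mathbf X}+\|v_{0}-\hat w_{0}\|_{\mathbf X}\bigr)+C_{1}\|v(t)\|_{\mathbf X}.
\]
The decay of $e$ thus reduces to controlling $\|v(t)\|_{\mathbf X}$, which Lemma~\ref{lemma31} supplies (uniformly bounded in general, and $\to0$ when $r\in H^{1}$); the constants $M_{1},\mu_{1}$ depend on $(w_{0},\hat w_{0},v_{0})$ through $\tilde w_{0},\eta_{0}$ and $v$. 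Finally, for assertion~3, when $r\in H^{1}(0,\infty;\mathcal H_{\Gamma_{3}})$ Lemma~\ref{lemma31} gives $\|v(t)\|_{\mathbf X}\to0$; inserting this into the bound of Lemma~\ref{lemma21} gives $\|\eta(t)\|_{\mathbf X}\to0$, hence $\|\hat w(t)\|_{\mathbf X}=\|v(t)-\eta(t)\|_{\mathbf X}\to0$, and since $\|\tilde w(t)\|_{\mathbf X}\to0$ also $\|w(t)\|_{\mathbf X}\to0$ and $\|e(t)\|\to0$, which is~\eqref{EstimationCnvergence1}.

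The main obstacle is Step~1: the feedback closes a loop $u\leftarrow v\leftarrow\hat w\leftarrow y_{m}=w|_{\Gamma_{1}}\leftarrow w\leftarrow u$, so before invoking the three lemmas one has to show that the error change of variables is a bijection onto $C([0,\infty);\mathbf X^{3})$ and that the residual coupling of $v$ with the plant through the Dirichlet trace on $\Gamma_{1}$ has the regularity required in Lemma~\ref{lemma31}. This last point uses the analytic-semigroup smoothing from Lemma~\ref{lemma1} (so that $w(t)$ lies in the domain of a power of the generator and $w|_{\Gamma_{1}}$ is smooth for $t\ge\tau>0$) together with the continuity of the Dirichlet maps $\Lambda_{1},\Lambda_{2}$ introduced in the proof of Lemma~\ref{lemma31}; on the remaining interval $[0,\tau]$ one closes the argument by a short-time fixed-point step.
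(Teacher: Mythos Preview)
Your proposal follows essentially the same strategy as the paper: introduce the error variables $\tilde w=\hat w-w$ and $\eta=v-\hat w$, observe that $e=-(\tilde w+\eta)\big|_{\Gamma_3}$, and then assemble the three assertions from Lemmas~\ref{lemma1}, \ref{lemma31} and~\ref{lemma21} via the triangle inequality. The one structural difference lies in how the well-posedness of the closed loop is argued. The paper further splits $v=\rho_1+\rho_2$ (Dirichlet lift of the boundary data $r,\,y_m$ plus a homogeneous remainder), recasts the coupled $(\eta,\rho_2)$-subsystem as an abstract Cauchy problem $\dot X=\mathbb A X+\mathbb B\,\partial_\nu\rho_1$, and verifies that the boundary operator $\mathbb B$ is admissible for $e^{\mathbb A t}$ by passing to the adjoint semigroup; this admissibility check is exactly the content you flag as the ``main obstacle'' and propose to handle instead by a short-time fixed-point step combined with analytic smoothing. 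Both routes feed into the same decay inequalities (the paper's \eqref{eq11}--\eqref{eq21}), after which the derivation of items~1--3 is identical to yours. Your observation that the trace bound for $e$ carries a residual $C_1\|v(t)\|_{\mathbf X}$ term coming from Lemma~\ref{lemma21} is also accurate; the paper absorbs this term without further comment when asserting \eqref{eq21}.
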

\begin{proof}
Let $\tilde{w}(t,x)=\hat{w}(t,x)-w(t,x)$ and $\eta(t,x)=v(t,x)-\hat{w}(t,x)$, then the closed loop system \eqref{closedloopsystem1} is equivalent to

\begin{equation}\label{BBGaa1}
\begin{cases}
\displaystyle \partial_{t}\tilde{w}(t,x)-\alpha\Delta \tilde{w}(t,x)+B.\nabla \tilde{w}(t,x)=0 &\ t>0,\,\,\, x\in\Omega,\\
\displaystyle \frac{\partial \tilde{w}(t,x)}{\partial \nu}=\mathcal{M}\tilde{w}(t,x)&\ t>0,\,\,\, x\in\Gamma_{4},\\
 \displaystyle \frac{\partial \tilde{w}(t,x)}{\partial \nu}(t,x)=0& \ t>0,\,\,\, x\in \Gamma_{3},\\
\displaystyle \frac{\partial \tilde{w}(t,x)}{\partial \nu}=0& \ t>0,\,\,\, x\in \Gamma_{2},\\
\displaystyle \tilde{w}(t,x)=0& \ t>0,\,\,\, x\in \Gamma_{1},\\
 \displaystyle \partial_{t}\eta(t,x)-\alpha\Delta \eta(t,x)+B.\nabla \eta(t,x)=0 &\ t>0,\,\,\, x\in\Omega,\\
\displaystyle \frac{\partial \eta(t,x)}{\partial \nu}=\mathcal{M}\eta(t,x)&\ t>0,\,\,\, x\in\Gamma_{4},\\
 \displaystyle \frac{\partial \eta(t,x)}{\partial \nu}(t,x)=\left(0,\frac{\partial v_{2} (t,x)}{\partial \nu}\right)& \ t>0,\,\,\, x\in \Gamma_{3},\\
\displaystyle \frac{\partial \eta(t,x)}{\partial \nu}=0& \ t>0,\,\,\, x\in \Gamma_{2},\\
\displaystyle \eta(t,x)=0& \ t>0,\,\,\, x\in \Gamma_{1},\\
\displaystyle \partial_{t}\rho_{2}(t,x)-\alpha\Delta \rho_{2}(t,x)+B.\nabla \rho_{2}(t,x)=0&\ t>0,\,\,\, x\in\Omega,\\
\displaystyle \frac{\partial \rho_{2}(t,x)}{\partial \nu}=\mathcal{M}\rho_{2}(t,x)&\ t>0,\,\,\, x\in\Gamma_{4},\\
\displaystyle \rho_{2}(t,x)=0& \ t>0,\,\,\, x\in \Gamma_{3},\\
\displaystyle \frac{\partial \rho_{2}(t,x)}{\partial \nu}=0& \ t>0,\,\,\, x\in \Gamma_{2},\\
\displaystyle \rho_{2}(t,x)=0& \ t>0,\,\,\, x\in \Gamma_{1},\\
\displaystyle \tilde{w}(0,x)=\tilde{w}_{0}(x)&  x\in\Omega,\\
\displaystyle \eta(0,x)=\eta_{0}(x)&  x\in\Omega,\\
\displaystyle \rho_{2}(0,x)=\rho_{20}(x)&  x\in\Omega,
\end{cases}
\end{equation}
and  
\begin{equation*}
\begin{cases}
\displaystyle \partial_{t}\rho_{1}(t,x)-\alpha\Delta \rho_{1}(t,x)+B.\nabla \rho_{1}(t,x)=0&\ t>0,\,\,\, x\in\Omega,\\
\displaystyle \frac{\partial \rho_{1}(t,x)}{\partial \nu}=\mathcal{M}\rho_{1}(t,x)&\ t>0,\,\,\, x\in\Gamma_{4},\\
\displaystyle \rho_{1}(t,x)=r(t,x)& \ t>0,\,\,\, x\in \Gamma_{3},\\
\displaystyle \frac{\partial \rho_{1}(t,x)}{\partial \nu}=0& \ t>0,\,\,\, x\in \Gamma_{2},\\
\displaystyle \rho_{1}(t,x)=y_{m}(t,x)& \ t>0,\,\,\, x\in \Gamma_{1},\\
 \displaystyle \rho_{1}(0,x)=\rho_{10}(x)&  x\in\Omega.
\end{cases}
\end{equation*}
Thus system \eqref{BBGaa1} becomes
\begin{equation}\label{BBGad}
\begin{cases}
 \displaystyle \partial_{t}\eta(t,x)-\alpha\Delta \eta(t,x)+B.\nabla \eta(t,x)=0 &\ t>0,\,\,\, x\in\Omega,\\
\displaystyle \frac{\partial \eta(t,x)}{\partial \nu}=\mathcal{M}\eta(t,x)&\ t>0,\,\,\, x\in\Gamma_{4},\\
 \displaystyle \frac{\partial \eta(t,x)}{\partial \nu}(t,x)=\left(0,\frac{\partial \rho_{12} (t,x)}{\partial \nu}+\frac{\partial \rho_{22} (t,x)}{\partial \nu}\right)& \ t>0,\,\,\, x\in \Gamma_{3},\\
\displaystyle \frac{\partial \eta(t,x)}{\partial \nu}=0& \ t>0,\,\,\, x\in \Gamma_{2},\\
\displaystyle \eta(t,x)=0& \ t>0,\,\,\, x\in \Gamma_{1},\\
\displaystyle \partial_{t}\rho_{2}(t,x)-\alpha\Delta \rho_{2}(t,x)+B.\nabla \rho_{2}(t,x)=0&\ t>0,\,\,\, x\in\Omega,\\
\displaystyle \frac{\partial \rho_{2}(t,x)}{\partial \nu}=\mathcal{M}\rho_{2}(t,x)&\ t>0,\,\,\, x\in\Gamma_{4},\\
\displaystyle \rho_{2}(t,x)=0& \ t>0,\,\,\, x\in \Gamma_{3},\\
\displaystyle \frac{\partial \rho_{2}(t,x)}{\partial \nu}=0& \ t>0,\,\,\, x\in \Gamma_{2},\\
\displaystyle \rho_{2}(t,x)=0& \ t>0,\,\,\, x\in \Gamma_{1},\\
\displaystyle \eta(0,x)=\eta_{0}(x)&  x\in\Omega,\\
\displaystyle \rho_{2}(0,x)=\rho_{20}(x)&  x\in\Omega.
\end{cases}
\end{equation}
System \eqref{BBGad} can be  rewritten as follows 
\begin{equation}
\frac{d}{dt}\left(\eta(t,.), \rho_{2}(t,.)\right)=\mathbb{A}\left(\eta(t,.), \rho_{2}(t,.)\right)+ \mathbb{B}\frac{\partial \rho_{1}(t,.)}{\partial \nu}.
\end{equation}
Now, we need to show that $\mathbb{B}v_{1}=\left(0,{\frac{\partial v_{12}(t,.)}{\partial \nu}}_{|_{\Gamma_{1}}}\right)$ , for all $v=v_{1}+v_{2}\in \bar{\mathcal{A}}$, is an admissible operator for $e^{\bar{\mathcal{A} t}}$.

Let 
\[
\mathbb{A}=\left(\alpha\Delta \phi -B.\nabla \phi,\alpha\Delta \psi -B.\nabla \psi\right),\,\,\forall \left(\phi,\psi\right)\in D(\mathbb{A}),
 \]
 and
 \begin{equation}
 \begin{aligned}
  D(\mathbb{A})&=\Big\{\left(\phi,\psi\right)\in \mathbf{Z};\,\displaystyle \frac{\partial \phi}{\partial \nu}\big|_{\Gamma_{4}}=\mathcal{M}\phi;\frac{\partial \phi}{\partial \nu}\big|_{\Gamma_{2}}=0;\phi|_{\Gamma_{1}}=0;\\&{\frac{\partial \phi(t,x)}{\partial \nu}}_{\Gamma_{3}}={\left(0,\frac{\partial \psi_{2} (t,x)}{\partial \nu}\right)}_{\Gamma_{3}}
  \,\displaystyle \frac{\partial \psi}{\partial \nu}\big|_{\Gamma_{4}}=\mathcal{M}\psi;\frac{\partial \psi}{\partial \nu}\big|_{\Gamma_{2}}=0;\psi|_{\Gamma_{1}}=\psi|_{\Gamma_{3}}=0 \Big\}.
  \end{aligned}
 \end{equation}
Let 
 \[
\mathbb{A}^{\ast}=\left(\alpha\Delta \eta +\nabla.\left(B\phi\right),\alpha\Delta \psi+\nabla.\left(B\psi\right)\right),\,\,\forall \left(\phi,\psi\right)\in D(\mathbb{A}^{\ast}),
 \]
 \begin{equation}
 \begin{aligned}
  D(\mathbb{A}^{\ast})&=\Big\{\left(\phi,\psi\right)\in \mathbf{Z};\,\displaystyle \frac{\partial \phi}{\partial \nu}\big|_{\Gamma_{4}}=\mathcal{M}^{\ast}\phi;\frac{\partial \phi}{\partial \nu}\big|_{\Gamma_{2}}=0;\phi|_{\Gamma_{1}}=0; \phi_{|_{\Gamma_{3}}}=\left(0,{\psi_{2}}_{|_{\Gamma_{3}}}\right)\\&
  \,\displaystyle \frac{\partial \psi}{\partial \nu}\big|_{\Gamma_{4}}=\mathcal{M}^{\ast}\psi;\frac{\partial \psi}{\partial \nu}\big|_{\Gamma_{2}}=0;\psi|_{\Gamma_{1}}=\psi|_{\Gamma_{3}}=0 \Big\}.
  \end{aligned}
 \end{equation}
 
 $\mathcal{M}^{\ast}=\begin{bmatrix}
   \gamma_{f}&-\gamma_{p}\\ \gamma_{f}&-\gamma_{p}
\end{bmatrix}$, $\mathbb{B}$ is an observation operator for the adjoint semigroup $e^{\mathbb{A}^{\ast}t}$. So we use to fact  that $\mathbb{B}^{\ast}\mathbb{A}^{\ast}$ is bounded from $\mathbf{Z}$ to $\mathcal{H}^{2}_{\Gamma_{1}}$. The adjoint problem is given by
 \begin{equation}\label{BBGad}
\begin{cases}
 \displaystyle \partial_{t}\eta^{\ast}(t,x)-\alpha\Delta \eta^{\ast}(t,x)-\nabla.\left(B\eta^{\ast}(t,x)\right)=0 &\ t>0,\,\,\, x\in\Omega,\\
\displaystyle \frac{\partial \eta^{\ast}(t,x)}{\partial \nu}=\mathcal{M}^{\ast}\eta^{\ast}(t,x)&\ t>0,\,\,\, x\in\Gamma_{4},\\
 \displaystyle  \eta^{\ast}(t,x)=\left(0, \rho^{\ast}_{22} (t,x)\right)& \ t>0,\,\,\, x\in \Gamma_{3},\\
\displaystyle \frac{\partial \eta^{\ast}(t,x)}{\partial \nu}=0& \ t>0,\,\,\, x\in \Gamma_{2},\\
\displaystyle \eta^{\ast}(t,x)=0& \ t>0,\,\,\, x\in \Gamma_{1},\\
\displaystyle \partial_{t}\rho^{\ast}_{2}(t,x)-\alpha\Delta \rho^{\ast}_{2}(t,x)-\nabla.\left(B\rho^{\ast}(t,x)\right)=0&\ t>0,\,\,\, x\in\Omega,\\
\displaystyle \frac{\partial \rho^{\ast}_{2}(t,x)}{\partial \nu}=\mathcal{M}^{\ast}\rho^{\ast}_{2}(t,x)&\ t>0,\,\,\, x\in\Gamma_{4},\\
\displaystyle \rho^{\ast}_{2}(t,x)=0& \ t>0,\,\,\, x\in \Gamma_{3},\\
\displaystyle \frac{\partial \rho^{\ast}_{2}(t,x)}{\partial \nu}=0& \ t>0,\,\,\, x\in \Gamma_{2},\\
\displaystyle \rho^{\ast}_{2}(t,x)=0& \ t>0,\,\,\, x\in \Gamma_{1},\\
\displaystyle \eta^{\ast}(0,x)=\eta^{\ast}_{0}(x)&  x\in\Omega,\\
\displaystyle \rho^{\ast}_{2}(0,x)=\rho^{\ast}_{20}(x)&  x\in\Omega.
\end{cases}
\end{equation}

We need to show that for every $\tau^{\ast}$ there exists $C=C_{\tau^{\ast}}$ such that 
\[
\int_{0}^{\tau^{\ast}}\int_{\Gamma_{1}}{\rho_{2}^{\ast}}^{2}(t,x)dt dx\leqslant C\left(\|\rho_{20}^{\ast}\|^{2}_{\mathbf{X}}+\|\eta_{0}^{\ast}\|^{2}_{\mathbf{X}}\right).
\]
By the Sobolev embedding theorem and the trace theorem,  we deduce that $\mathbb{B}$ is admissible for $e^{\mathbb{A}t}$. From Lemmas \ref{lemma1} and \ref{lemma21} for any $\tau>0$, there exist $M_{1}$ and $M_{2}$ depending on $\tilde{w}_{01}$ and $\mu_{0}$ and $\mu_{1},\,\mu_{2}>0$
\begin{equation}\label{eq11}
  \|\tilde{w}(t)\|_{\mathbf{X}}+\|\eta(t)\|_{\mathbf{X}}\leqslant M_{1}e^{-\mu_{1}t},\,t\geqslant0,
\end{equation}
\begin{equation}\label{eq21}
  \|\tilde{w}(t)\|_{\mathcal{H}_{\Gamma_{3}}}+\|\eta(t)\|_{\mathcal{H}_{\Gamma_{3}}}\leqslant M_{2}e^{-\mu_{2}t},\,t\geqslant \tau.
\end{equation}
The $(w,\hat{w},v)$-system is well-posed and admits a unique solution. From Lemma  \ref{lemma1}, \eqref{eq11} and \eqref{eq21} we conclude claim 1. Moreover, claim 2 follow from \eqref{eq11} and \eqref{eq21}. So, we have $e(t,x)=y_{0}(t,x)-r(t,x)=-\tilde{w}(t,x)|_{\Gamma_{3}}-\eta(t,x)|_{\Gamma_{3}}$, then the claim 3 follows from \eqref{eq21}. Finally, by Lemma \ref{lemma31} and \eqref{eq11}, estimate \eqref{EstimationCnvergence1} holds as well.
\end{proof}
Now, we discuss the performance of our tracking problem for some noise measurement $\sigma$.
\begin{proposition}
Suppose that $r\in H^{1}(0,\infty,\mathcal{H}_{\Gamma_{3}})$, assuming $y_{m}={w}_{|_{\Gamma_{1}}}+\sigma$ where $\sigma$ is the noise $\sigma\in W^{2,\infty}(0,\infty,\mathcal{H}_{\Gamma_{1}})$. Then, for $\left(w_{0}, \hat{w}_{0},v_{0}\right)$, the closed-loop system \eqref{closedloopsystem1} admits a unique solution $\left(w,\hat{w},v\right)\in C(0,\infty;\mathbf{X}^{3})$. The output tracking is robust with respect to $\sigma$. In addition,  for any fixed $\tau>0$, there exists two constants $M> 0$ which depend only on initial value and a constant $\mu>0$  such that
\[
\|e(t)\|_{\mathbf{X}}\le M\left(e^{-\mu t}+ \|\sigma\|_{W^{2,\infty}(0,\infty,\mathcal{H}_{\Gamma_{1}})}\right),\quad \quad \forall t\geqslant 0.
\]
\end{proposition}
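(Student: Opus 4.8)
The plan is to re-run the cascade argument of the preceding theorem, but to keep track of how the measurement noise $\sigma$ propagates through each block and to check that it only ever enters as an additive term of size $\|\sigma\|_{W^{2,\infty}(0,\infty,\mathcal H_{\Gamma_1})}$. As before, set $\tilde w=\hat w-w$ and $\eta=v-\hat w$, so that the tracking error decomposes as $e(t,\cdot)|_{\Gamma_{3}}=-\tilde w(t,\cdot)|_{\Gamma_{3}}-\eta(t,\cdot)|_{\Gamma_{3}}$; it then suffices to bound each of these two traces by $M\bigl(e^{-\mu t}+\|\sigma\|_{W^{2,\infty}}\bigr)$. The well-posedness statement will come for free by superposing, as in Lemma~\ref{lemma31} and the preceding theorem, the three blocks ($\tilde w$, $\eta$, and the reference block $\rho_{1}$), each of which is shown below to be well posed.

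First I would treat the observer error. With $y_{m}=w|_{\Gamma_{1}}+\sigma$, the only change in the error system \eqref{BBGL0.05} is that the Dirichlet condition on $\Gamma_{1}$ becomes $\tilde w|_{\Gamma_{1}}=\sigma$. I would lift this datum by a Dirichlet map $\Lambda$ (built exactly as $\Lambda_{1},\Lambda_{2}$ in Lemma~\ref{lemma31}, but with the boundary datum carried on $\Gamma_{1}$ and homogeneous Neumann conditions on $\Gamma_{2},\Gamma_{3}$ and the Robin condition on $\Gamma_{4}$), and write $\tilde w=\bar w+\zeta$ with $\zeta(t)=\Lambda\sigma(t)$. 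Then $\bar w$ solves the homogeneous-boundary parabolic problem with interior source $-\zeta_{t}=-\Lambda\sigma_{t}$, i.e. $\partial_{t}\bar w=\tilde{\mathcal A}\bar w-\Lambda\sigma_{t}$, where $\tilde{\mathcal A}$ generates the exponentially stable analytic semigroup $\tilde S(t)$ of Lemma~\ref{lemma1}. Boundedness of $\Lambda$, the exponential stability $\|\tilde S(t)\|\le e^{-\delta_{0}t}$, and the variation-of-constants formula give $\|\bar w(t)\|_{\mathbf X}\le e^{-\delta_{0}t}\|\bar w_{0}\|_{\mathbf X}+\delta_{0}^{-1}\sup_{s}\|\Lambda\sigma_{t}(s)\|_{\mathbf X}$, hence $\|\tilde w(t)\|_{\mathbf X}\le M\bigl(e^{-\mu t}+\|\sigma\|_{W^{2,\infty}}\bigr)$.

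The crucial and more delicate point is the boundary-trace estimate for $\tilde w$: the instantaneous smoothing exploited in Lemma~\ref{lemma1} is no longer available because of the source. I would recover it by differentiating in time: $\bar w':=\partial_{t}\bar w$ solves the same homogeneous problem with source $-\Lambda\sigma_{tt}$, and since $\sigma\in W^{2,\infty}$ this yields $\|\bar w'(t)\|_{\mathbf X}\le M\bigl(e^{-\mu t}+\|\sigma\|_{W^{2,\infty}}\bigr)$. Feeding this back into the equation gives $\tilde{\mathcal A}\bar w(t)=\bar w'(t)+\Lambda\sigma_{t}(t)$, so $\bar w(t)\in\mathcal D(\tilde{\mathcal A})$ with norm bounded by the same quantity; the elliptic $H^{2}$‑regularity established in Section~\ref{sec-exist-unic} (the Faierman-type argument) then upgrades this to $\|\bar w(t)\|_{\mathcal H_{s}^{2}}\le M\bigl(e^{-\mu t}+\|\sigma\|_{W^{2,\infty}}\bigr)$. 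Controlling the contribution of $\zeta$ in the same spirit and applying the Sobolev trace theorem exactly as in Lemma~\ref{lemma1}, I obtain, for every fixed $\tau>0$ and all $t>\tau$,
\[
\Bigl\|\frac{\partial \tilde w}{\partial\nu}\Bigr\|_{\mathcal H_{\Gamma_{1}}}+\bigl\|\tilde w\bigr\|_{\mathcal H_{\Gamma_{3}}}\le M\bigl(e^{-\mu t}+\|\sigma\|_{W^{2,\infty}(0,\infty,\mathcal H_{\Gamma_{1}})}\bigr).
\]
In particular, since $\partial_{\nu}\tilde w|_{\Gamma_{1}}=\partial_{\nu}\hat w|_{\Gamma_{1}}-d$, the disturbance reconstruction \eqref{distApp} degrades only by the additive term $\|\sigma\|_{W^{2,\infty}}$.

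Finally I would handle the servo error: the $\eta$-system is still \eqref{BBGL0.0721}, forced through $\Gamma_{3}$ by $v_{2}=\rho_{12}+\rho_{22}$, and the noise re-enters only through the $\rho_{2}$-block of the servomechanism, whose $\Gamma_{1}$-datum is $y_{m}=w|_{\Gamma_{1}}+\sigma$. Repeating the Dirichlet-lift estimate of Lemma~\ref{lemma31} and Lemma~\ref{lemma21} for this block, together with the same time-differentiation/elliptic-regularity bootstrap to reach the $\Gamma_{3}$-trace, gives $\|\eta(t)\|_{\mathcal H_{\Gamma_{3}}}\le M\bigl(e^{-\mu t}+\|\sigma\|_{W^{2,\infty}}\bigr)$ for $t>\tau$. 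Combining with the previous step through $e|_{\Gamma_{3}}=-\tilde w|_{\Gamma_{3}}-\eta|_{\Gamma_{3}}$ yields the claimed bound for $t>\tau$; uniform boundedness on $[0,\tau]$ follows from continuity of the solution, so after enlarging $M$ it holds for all $t\ge0$, and when $r\in H^{1}$ the $\sigma$-free parts vanish in the limit by the preceding theorem, leaving only the $\|\sigma\|_{W^{2,\infty}}$ term — which is precisely the asserted robustness. The main obstacle I anticipate is exactly the trace bound: converting the $W^{2,\infty}$-in-time regularity of the noise into a uniform-in-$t$ $H^{2}$-in-space estimate for the error (so that the normal trace on $\Gamma_{1}$ and the trace on $\Gamma_{3}$ are meaningful and small), and checking that the Dirichlet lift of a noise that is only $L^{2}$ in the spatial variable does not spoil the $\Gamma_{1}$ normal-trace estimate, will require some care.
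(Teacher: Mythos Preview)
Your proposal follows essentially the same route as the paper: introduce the Dirichlet lift of the noise on $\Gamma_{1}$, reduce the observer error $\tilde w$ to a homogeneous-boundary problem with interior source $-\Lambda\sigma_t$, and then pass from an $\mathbf X$-bound to a trace bound via $H^2$-regularity and the trace theorem. The paper's proof is terser---it invokes ``parabolic regularity'' for the nonhomogeneous equation (citing \cite{TucWeiss}) to jump directly to the $\mathbf Z$-estimate and stops after the $\tilde w$ block---whereas you make the mechanism explicit by differentiating in time (this is precisely where the $W^{2,\infty}$ hypothesis on $\sigma$ is spent) and you also carry the argument through the $\eta$ block, which the paper leaves implicit; both are correct, and your version makes clearer why two time derivatives of $\sigma$ are needed.
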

\begin{proof}
Let $\tilde{w}(t,x)=\hat{w}(t,x)-w(t,x)$ then the error of of estimation is governed by
\begin{equation}\label{tildewnoise}
\begin{cases}
\displaystyle \partial_{t}\tilde{w}(t,x)-\alpha\Delta \tilde{w}(t,x)+B.\nabla \tilde{w}(t,x)=0 &\ t>0,\,\,\, x\in\Omega,\\
\displaystyle \frac{\partial \tilde{w}(t,x)}{\partial \nu}=\mathcal{M}\tilde{w}(t,x)&\ t>0,\,\,\, x\in\Gamma_{4},\\
 \displaystyle \frac{\partial \tilde{w}(t,x)}{\partial \nu}(t,x)=0& \ t>0,\,\,\, x\in \Gamma_{3},\\
\displaystyle \frac{\partial \tilde{w}(t,x)}{\partial \nu}=0& \ t>0,\,\,\, x\in \Gamma_{2},\\
\displaystyle \tilde{w}(t,x)=\sigma (t,x)& \ t>0,\,\,\, x\in \Gamma_{1},\\
 \displaystyle \tilde{w}(0,x)=\tilde{w}_{0}(x)&  x\in\Omega.
\end{cases}
\end{equation}
Now, in order to show the well-posedness of $(\tilde{w},\eta,v)$, we will process as previously by introducing the Dirichlet map  $\Lambda_{3}\in \mathcal{L}(   \mathcal{H}_{\Gamma_{3}};   \mathcal{H}_{s}^{\frac{1}{2}})$, $\Lambda_{3}(\sigma)=z$ where is the solution of
\begin{equation*}
\begin{cases}
\displaystyle -\alpha\Delta z+B.\nabla z=0&\  x\in\Omega,\\
\displaystyle \frac{\partial z}{\partial \nu}=\mathcal{M}.z&\ \,\,\, x\in\Gamma_{4},\\
\displaystyle \frac{\partial z}{\partial \nu}=0& \ \,\,\, x\in \Gamma_{3},\\
\displaystyle \frac{\partial z}{\partial \nu}=0& \ \,\,\, x\in \Gamma_{2},\\
\displaystyle z=\sigma& \ \,\,\, x\in \Gamma_{1},\\
\end{cases}
\end{equation*}
we have $\sigma\in W^{2,\infty}(0,\infty;\mathcal{H}_{\Gamma_{1}})$ then it follows that $z\in W^{2,\infty}(0,\infty;\mathcal{H}_{s}^{1/2})$ and for all $t\geqslant 0$, we have 
\begin{equation*}
  \|z(t)\|_{\mathcal{H}_{s}^{1/2}}\leqslant C_{1}\|\sigma(t)\|_{\mathcal{H}_{\Gamma_{1}}},\quad \|z_{t}(t)\|_{\mathcal{H}_{s}^{1/2}}\leqslant C_{2}\|\sigma_{t}(t)\|_{\mathcal{H}_{\Gamma_{1}}}.
\end{equation*}
Then, we can deduce there exist $C>0$
\begin{equation*}
  \|z_{t}(t)\|_{L^{2}(\Omega)}\leqslant C\|\sigma\|_{W^{2,\infty}(0,\infty;\mathcal{H}_{\Gamma_{1}})}, \quad \forall t \geqslant 0.
\end{equation*}
Thus, using the Dirichlet map $\Lambda_{3}$ and introducing the new variable  $\bar{w}(t,x)=\tilde{w}(t,x)-z(t,x)$,  system \eqref{tildewnoise} becomes 
\begin{equation*}
\begin{cases}
\displaystyle \partial_{t}\bar{w}(t,x)-\alpha\Delta \bar{w}(t,x)+B.\nabla \bar{w}(t,x)=-\partial_{t}z &\ t>0,\,\,\, x\in\Omega,\\
\displaystyle \frac{\partial \bar{w}(t,x)}{\partial \nu}=\mathcal{M}\bar{w}(t,x)&\ t>0,\,\,\, x\in\Gamma_{4},\\
 \displaystyle \frac{\partial \bar{w}(t,x)}{\partial \nu}(t,x)=0& \ t>0,\,\,\, x\in \Gamma_{3},\\
\displaystyle \frac{\partial \bar{w}(t,x)}{\partial \nu}=0& \ t>0,\,\,\, x\in \Gamma_{2},\\
\displaystyle \bar{w}(t,x)=0& \ t>0,\,\,\, x\in \Gamma_{1},\\
 \displaystyle \bar{w}(0,x)=\bar{w}_{0}(x)&  x\in\Omega.
\end{cases}
\end{equation*}
We know that $\tilde{\mathcal{A}}$ defined in the proof of Lemma \ref{lemma1} generates our analytic semigroup $\tilde{S}(t)$, thus $\bar{w}_{0}\in \mathbf{X}$, the mild solution of \eqref{tildewnoise} is given by 
\[
\bar{w}(t,x)=S(t)\bar{w}_{0}-\int_{0}^{t}S(t-s)z_{s}(s,.)ds.
\]
Using the parabolic regularity and then the nonhomogenous parabolic equation see \cite{TucWeiss}, we can deduce 
\[
\|\bar{w}(t)\|_{\mathbf{Z}}\leqslant C \left(e^{-\mu_{0}t}\|\bar{w}_{0}\|_{L^{2}(\Omega)}+\|\sigma\|_{W^{2,\infty}(0,\infty;\mathcal{H}_{\Gamma_{1}})}\right), \quad \forall t \geqslant 0,
\]
where $C>0$. Then, by  the trace theorem, it follows that 
\[
\|\bar{w}(t)\|_{\mathcal{H}_{\Gamma_{1}}}\leqslant C_{1}\left(e^{-\mu_{0}t}\|\bar{w}_{0}\|_{L^{2}(\Omega)}+\|\sigma\|_{W^{2,\infty}(0,\infty;\mathcal{H}_{\Gamma_{1}})}\right), \quad \forall t \geqslant 0.
\]
Thus finished the proof. 
\end{proof}

\section{Numerical illustrations}\label{sectionnumeric}
In this section, we will show a  numerical example illustrating the effectiveness of the feedback  control law. In order to simplify the notations we assume that $L=2$, $\alpha_{f}=3$, $\alpha_{p}=3.5$, $\gamma_{f}=0.2$, $\gamma_{p}=0.1$ and the convection term $B$ is assumed to be zero. For numerical computations, we take  $\Omega=[0,1]\times[0,2]$. The external disturbance and the reference signal are taken as $d(t)=\left(0.1\sin(\frac{\pi}{2} t),0.1\sin(\frac{\pi}{2} t)\right)$, and $r(t,x)=\left(15\sin(\frac{\pi}{2} x t),10\sin(\frac{\pi}{2} xt)\right)$. The initial values are taken as 

\begin{equation*}
\begin{cases}
\displaystyle w_{0}(x,y)=\left(6\sin(\pi x)\cos(\frac{\pi}{4}y),3\sin(\frac{\pi}{2} x)\cos(\frac{\pi}{4}y)\right)&\ \, (x,y)\in\Omega,\\
\displaystyle \hat{w}_{0}(x,y)=\left(5\sin(\pi x)\cos(\frac{\pi}{8}y),2.5\sin(\pi x)\cos(\frac{\pi}{4}y)\right)&\ \,  (x,y)\in\Omega,\\
\displaystyle v_{0}(x,y)=\left(4\sin(\frac{\pi}{2} x)\cos(\frac{\pi}{4}y),3.5\sin(\frac{\pi}{2} x)\cos(\frac{\pi}{8}y)\right)&\ \,  (x,y)\in\Omega.
\end{cases}
\end{equation*}
The backward Euler method in time and the finite element method for the spatial domain are used to discretize the system \eqref{closedloopsystem1}. The grid sizes are  $h=1/100$ for space step and $dt=2\times 10^{-3}$ for the time step. The numerical algorithm is programmed in Freefem++\footnote{https://freefem.org}, and the numerical results are showed in figures  \ref{figsum1}-\ref{figsum2}. Figure \ref{figsum1} plots the $L^{2}$ norm of the tracking error  $\|e(t)\|_{\mathcal{H}_{\Gamma_{3}}}=\|y_{r}(t)-r(t)\|_{\mathcal{H}_{\Gamma_{3}}}$ for the output signal to be regulated and the reference. Figure \ref{figsum2} shows the $L^{2}$ norm of $w-v$ and $\hat{w}-v$ systems at final time $t=10$. 

\begin{figure}[!hbt]
	\center	\includegraphics[scale=0.20]{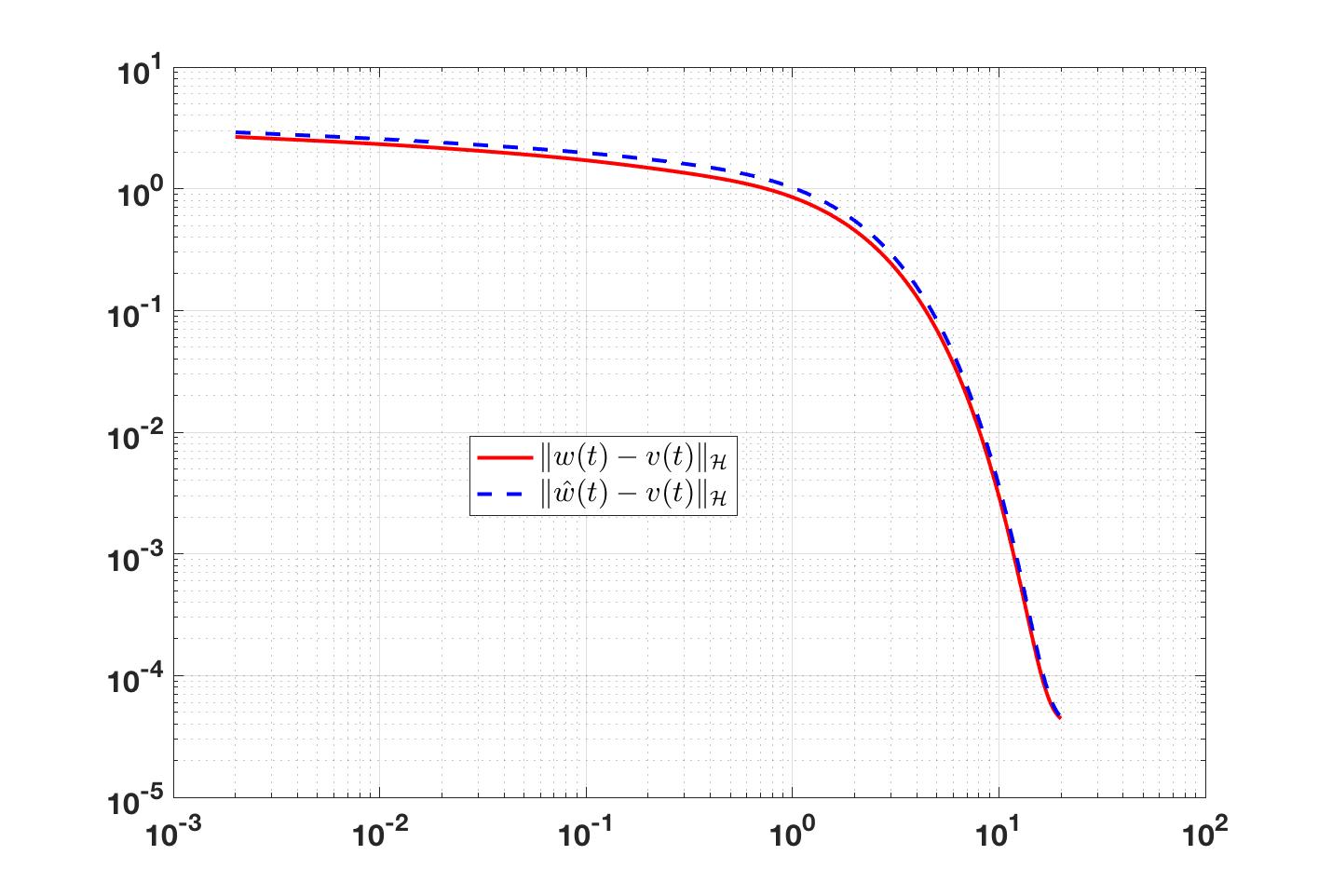}
	\caption{The error of estimation for the closed-loop system}
	\label{figsum1}
\end{figure}

\begin{figure}[!hbt]
		\center\includegraphics[scale=0.17]{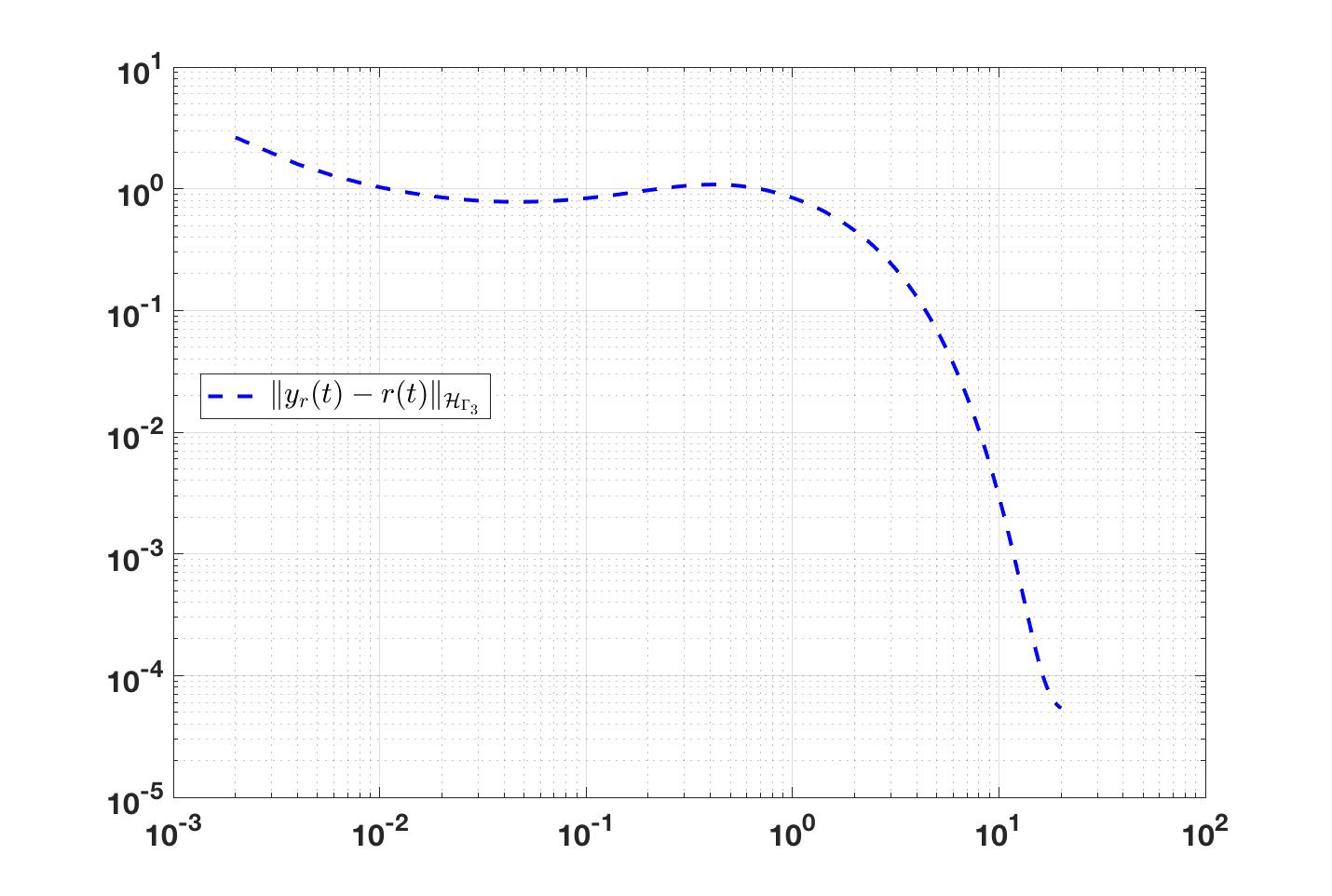}
	\caption{The tracking error $\|e(t)\|_{\mathcal{H}_{\Gamma_{3}}}=\|y_{r}(t)-r(t)\|_{\mathcal{H}_{\Gamma_{3}}}$}
	\label{figsum2}
\end{figure}

\section{Conclusion and comments}\label{section7}
In this paper, we present a mathematical analysis of a system of two dimensional advection diffusion equations coupled at the boundary .  This system of equations models  the  heat transfer in a direct contact membrane distillation process. We introduce new formulation of the problem, based on a semi group framework and provide the well-posedness criteria for the system, using the operator theory. We also analyze the co-current case for which the operator has been shown to be diagonalizable. However, based on the ADRC, we design a feedback law for the DCMD system, and show that the performance output exponentially tracks the reference signal. We then discuss  the robustness to the measurement noise. Moreover, a numerical example showing the effectiveness of the proposed control strategy is given.

In future studies, we plan to investigate  the ADRC for this kind of parabolic system in order to track the membrane  temperature, even though it will be a difficult task. In fact, by tracking desired temperature differences across the membrane, we will be able to improve the drinking water production of DCMD process. To do so, we need first to consider the ADRC control of the parabolic PDE when the control and reference tracking are in the distinct part of the boundary.
 
\bibliographystyle{siamplain}
\bibliography{paperGLV}

\appendix

\section{The operator related to the membrane distillation system}
\label{deft-op-A}
 First, we introduce the  domain of the operator: consider
 the   space $\mathbf{E}$, which is the set of pairs 
 $(f,p)$ in  $\bigl[H^{1}(\Omega) \cap C^{1}(\bar\Omega)\bigr]^{2}$ such that
 \begin{itemize}
 \item
 $ f(x,0)=p(x,L) = 0$  for every $x\in(0,1)$\,;
 \item
 $\partial_{x} f(0,y)=\partial_{x} p(0,y) = 0$,   for every $ y \in (0,L)$\,;
 \item
$\partial_y f(x,L)=\partial_y p(x,0) = 0$,   for every $x\in(0,1)$\,;
\item
$ \partial_{x}f(1,y)= - \gamma_f\bigl(f(1,y)-p(1,y)\bigr)$, for every $y\in(0,L)$\,;
\item
$ \partial_{x}p(1,y)=\gamma_p\bigl(f(1,y)-p(1,y)\bigr)$ for every $y\in(0,L)$\,.
 \end{itemize}
The space $H^1( \Omega)\times H^1(\Omega)$ is equipped with the product topology, so the norm of an element 
$(f,p)\in H^1( \Omega)\times H^1( \Omega)$ is defined by
\begin{equation}
\label{def-norme-H1}
\|\left(f,p\right)\|^2= \int_ \Omega f^2 + \int_ \Omega \vert\nabla f \vert^2 +\int_ \Omega p^2 + \int_ \Omega \vert\nabla p\vert^2\,.
\end{equation}
On $L^2( \Omega)\times L^2( \Omega)$, we consider the following inner product:
 \begin{equation}
 \label{ps-L2}
  \Ps{(f,p)}{(g,q)}:= 
\alpha_{p}\gamma_p \int_ \Omega f(x,y)g(x,y)\,\d x\d y + \alpha_f \gamma_f\int_ \Omega p(x,y)q(x,y)\,\d x\d y\,;
 \end{equation}
 notice that this inner product induces the product topology on $L^2( \Omega)\times L^2( \Omega)$\,.  
We denote by $\Hbc$ the closure of $\mathbf{E}$ in $\bigl[H^{1}(\Omega)\bigr]^{2}$\,; from the Poincar\'e's inequality, the induced norm on $\Hbc$ defined by~\eqref{def-norme-H1} is equivalent to the following one
\begin{equation}
\label{def-norm-Hbc}
\|(f,p)\|^2_{\Hbc} =   \alpha_f\int_ \Omega \|\nabla f\|^2 + \alpha_p \int_ \Omega \|\nabla p\|^2, \quad(f,p)\in\Hbc\,.
\end{equation}
We then denote by $A_0$  the operator  whose domain is given by 
\[
\mathcal{D}(A_0):=
\ens{(f,p)\in \Hbc}{(\Delta f,\Delta p)\in \bigl[L^{2}(\Omega)\bigr]^{2} }
\]
and which is defined by
\[
A_0(f ,p ) = \bigl(\alpha_{f}\Delta f , \alpha_{p}\Delta p\bigr)\,. 
\]
 
We introduce also the operator ${B}_0$, whose    domain is the one of $ {A}_0$ and which is defined   by 
\[
B_0(f,p)= 
\bigl(-\beta_{f}\partial_{y} f ,\beta_{p}\partial_{y} p\bigr)\,;
\]
finally we define operator $ {A}$, related to system~\eqref{glv1.1} by 
\[
  {A}= {A}_0+ {B}_0\,.
\]

\subsection{The operator ${A}_{0}$}
\label{sec:op-A0}
In this section, we shall prove that the operator $ {A}_0$ is, as the laplacian operator, self-adjoint and m-dissipative. The proof is analogous to the one that shows these classical properties of the laplacian operator (see \emph{e.g.}~\cite{TucWeiss}). We begin by the following proposition.

 \begin{proposition} \label{proposition_embedding}
The embedding operator from $\Hbc$ to $\bigl[L^{2}(\Omega)\bigr]^{2}$ is compact.
\end{proposition}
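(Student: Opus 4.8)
The plan is to reduce the statement to the classical Rellich--Kondrachov theorem. By construction, $\Hbc$ is a closed subspace of $\bigl[H^{1}(\Omega)\bigr]^{2}$, so the inclusion $\iota\colon\Hbc\hookrightarrow\bigl[H^{1}(\Omega)\bigr]^{2}$ is bounded; in fact, by Poincar\'e's inequality the intrinsic norm~\eqref{def-norm-Hbc} is equivalent to the restriction of the full $H^{1}$-norm~\eqref{def-norme-H1} (this equivalence has already been recorded just after~\eqref{def-norm-Hbc}, and it rests on the fact that every element of $\mathbf{E}$, hence of its closure, vanishes on a piece of $\partial\Omega$ of positive measure, namely $f$ on $\{y=0\}$ and $p$ on $\{y=L\}$). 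Likewise, the inner product~\eqref{ps-L2} on $\bigl[L^{2}(\Omega)\bigr]^{2}$ induces the product topology, so the target space carries its usual norm up to equivalence.

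Next I would invoke the compactness of the embedding $H^{1}(\Omega)\hookrightarrow L^{2}(\Omega)$. Since $\Omega=(0,1)\times(0,L)$ is a bounded rectangle it has the uniform cone property, hence the extension property, so the Rellich--Kondrachov theorem (see \emph{e.g.}~\cite{Brezis} or~\cite{evans2010partial}) applies: the embedding $H^{1}(\Omega)\hookrightarrow L^{2}(\Omega)$ is compact. Taking Cartesian products, the embedding $\bigl[H^{1}(\Omega)\bigr]^{2}\hookrightarrow\bigl[L^{2}(\Omega)\bigr]^{2}$ is compact as well, a finite product of compact operators being compact.

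Finally, the embedding $\Hbc\hookrightarrow\bigl[L^{2}(\Omega)\bigr]^{2}$ in question factors as the composition of the bounded inclusion $\iota\colon\Hbc\hookrightarrow\bigl[H^{1}(\Omega)\bigr]^{2}$ with the compact embedding $\bigl[H^{1}(\Omega)\bigr]^{2}\hookrightarrow\bigl[L^{2}(\Omega)\bigr]^{2}$; the composition of a compact operator with a bounded one is compact, which gives the claim. Concretely, given a bounded sequence $(f_{n},p_{n})$ in $\Hbc$, it is bounded in $\bigl[H^{1}(\Omega)\bigr]^{2}$ by the norm equivalence, and Rellich--Kondrachov yields a subsequence converging in $\bigl[L^{2}(\Omega)\bigr]^{2}$.

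There is no real obstacle here; the only point requiring a word of care is the norm equivalence on $\Hbc$, i.e. the validity of Poincar\'e's inequality on this space, which is precisely where the homogeneous Dirichlet condition built into the definition of $\mathbf{E}$ is used. Once that is granted, the result is a one-line consequence of the Rellich--Kondrachov theorem.
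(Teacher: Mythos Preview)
Your proof is correct but follows a genuinely different route from the paper. The paper argues directly on the rectangle $\Omega=(0,1)\times(0,L)$: it takes the sine basis $\varphi_{\alpha}(x,y)=\tfrac{2}{\sqrt{L}}\sin(\alpha_{1}\pi x)\sin\bigl(\tfrac{\alpha_{2}\pi}{L}y\bigr)$ of $L^{2}(\Omega)$, writes the $L^{2}$ and $H^{1}$ norms of $f$ and $p$ in terms of Fourier coefficients, and then approximates the embedding $J$ by the finite-rank truncations $J_{m}$ obtained by keeping only the modes with $\|\alpha\|^{2}\le m$. The tail estimate $\|J-J_{m}\|\le (1+m)^{-1/2}$ shows that $J$ is a norm-limit of finite-rank operators, hence compact. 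Your argument instead factors the embedding as (bounded inclusion of a closed subspace) $\circ$ (Rellich--Kondrachov), invoking the compactness of $H^{1}(\Omega)\hookrightarrow L^{2}(\Omega)$ as a black box.

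Both are valid. Your approach is shorter, works verbatim on any bounded Lipschitz domain, and makes transparent that nothing about the specific boundary conditions in $\Hbc$ is needed beyond the Poincar\'e inequality ensuring norm equivalence. The paper's approach is more self-contained (it does not cite Rellich--Kondrachov) and yields an explicit rate of approximation by finite-rank operators, but it is tied to the rectangular geometry and to the particular Fourier basis chosen.
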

\begin{proof}
We denote by $ {J}$ the embedding $\Hbc \hookrightarrow \bigl[L^{2}(\Omega)\bigr]^{2}$
 From the elementary theory of Fourier series we know
that the family $\left(\varphi_{\alpha}\right)_{\alpha\in \mathbb{N}^{2}}$ defined by
\[
\varphi_ \alpha=\frac{2}{\sqrt L}\sin(\alpha_{1}\pi\,x)\sin\Bigl(\frac{\alpha_{2}\pi}L\,y\Bigr),\quad \alpha:=(\alpha_1, \alpha_2)\in \mathbb{N}^{2},
\]
 is an orthonormal basis for $L^{2}( \Omega)$. In this proof, we need the notation ${\|\alpha\|^{2}=\alpha^{2}_{1}+\alpha^{2}_{2}}$
for a multi-index $\alpha\in\mathbb{N}^{2}$. Let $\left(f,p\right) \in \Hbc$. Then
\[
\|f\|^{2}_{L^{2}\left(\Omega\right)}=\sum_{\alpha\in\mathbb{N}^{2}}\Big|\left<f,\varphi_{\alpha}\right>\Big|^{2}
\]
and
\[
\|f\|^{2}_{H^{1}\left(\Omega\right)}=\sum_{\alpha\in\mathbb{N}^{2}}\left(1+ \|\alpha\|^{2}\right)\Big|\left<f,\varphi_{\alpha}\right>\Big|^{2}\,.
\]

From the above formulas it follows that if $m\in \mathbb{N}$, and if $ {J}_{m}\in\mathcal{L}(\Hbc,L^{2}(\Omega))$ is
defined by
\[
 {J}_{m}( f,p)=\sum_{\alpha\in\mathbb{N}^{2}, \|\alpha\|^{2}\leqslant m}\left<f,\varphi_{\alpha}\right>\varphi_{\alpha}+\sum_{\alpha\in\mathbb{N}^{2}, \|\alpha\|^{2}\leqslant m}\left<p,\varphi_{\alpha}\right>\varphi_{\alpha}\,,
\]
 then, as 
\[
\sum_{ \alpha\in\N^2}\| \alpha\|^2|\ps{f}{ \varphi_ \alpha}|^2 \ge m\sum_{\mathclap{\substack{ \alpha\in\N^2\\\| \alpha\|^2>m}}}|\ps f{ \varphi_ \alpha}|^2 \text{ and }
\sum_{ \alpha\in\N^2}\| \alpha\|^2|\ps{p}{ \varphi_ \alpha}|^2 \ge m\sum_{\mathclap{\substack{ \alpha\in\N^2\\\| \alpha\|^2>m}}}|\ps p{ \varphi_ \alpha}|^2\,,
\]
we have
\[
\| {J}\left(f,p\right)- {J}_{m}\left(f,p\right)\|^{2}_{L^{2}\left(\Omega\right)}\leqslant \frac{1}{1+m}\|\left(f,p\right)\|_{\Hbc}\,,
\]
and so
\[
\lim_{m\to\infty}  {J}_m= {J}\,.
\]
Since the dimension of $\ran( {J}_{m})$  is finite, this implies that $ {J}$ is compact (see \cite[Proposition 12.2.2]{TucWeiss}).
\end{proof}

\begin{thm}
The operator $ {A}_{0}$ is self-adjoint and diagonalizable.
\end{thm}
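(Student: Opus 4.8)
The plan is to deduce self-adjointness from symmetry together with m-dissipativity, and then to upgrade this to diagonalizability using the compactness of the embedding $\Hbc\hookrightarrow[L^2( \Omega)]^2$ established in Proposition~\ref{proposition_embedding}.

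First I would compute the Dirichlet form of $ {A}_0$. For $(f,p),(g,q)\in\mathcal{D}( {A}_0)$, Green's formula applied to $\Ps{ {A}_0(f,p)}{(g,q)}$, combined with the homogeneous boundary conditions built into $\mathbf{E}$ (Neumann on $ \mathrm{B}_2, \mathrm{B}_3, \mathrm{B}_5, \mathrm{B}_6$, homogeneous Dirichlet on $ \mathrm{B}_1, \mathrm{B}_4$, and the Robin-type coupling on $ \mathrm{I}_f, \mathrm{I}_p$), annihilates every boundary integral except the two carried by the interface $x=1$, leaving
\begin{multline*}
\Ps{ {A}_0(f,p)}{(g,q)} = -\alpha_f\alpha_p\gamma_p\int_ \Omega \nabla f\cdot\nabla g\,\d x\d y - \alpha_f\alpha_p\gamma_f\int_ \Omega \nabla p\cdot\nabla q\,\d x\d y\\
{}- \alpha_f\alpha_p\gamma_f\gamma_p\int_0^L\bigl(f(1,y)-p(1,y)\bigr)\bigl(g(1,y)-q(1,y)\bigr)\,\d y .
\end{multline*}
The point is that the weights $ \alpha_p\gamma_p$ and $ \alpha_f\gamma_f$ in the inner product~\eqref{ps-L2} are chosen precisely so that the two interface terms recombine into this symmetric quadratic form; hence $ {A}_0$ is symmetric, and taking $(g,q)=(f,p)$ shows $\Ps{ {A}_0(f,p)}{(f,p)}\le 0$, so $ {A}_0$ is dissipative. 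Density of $\mathcal{D}( {A}_0)$ in $[L^2( \Omega)]^2$ is clear, since $\mathcal{D}( {A}_0)$ contains $C_c^\infty( \Omega)\times C_c^\infty( \Omega)$.

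Next I would prove $\ran(\Id- {A}_0)=[L^2( \Omega)]^2$. For $(u,v)\in[L^2( \Omega)]^2$ I introduce on $\Hbc$ the bilinear form
\begin{multline*}
 \mathfrak{a}\bigl((f,p),(g,q)\bigr)= \Ps{(f,p)}{(g,q)} + \alpha_f\alpha_p\gamma_p\int_ \Omega \nabla f\cdot\nabla g\,\d x\d y + \alpha_f\alpha_p\gamma_f\int_ \Omega \nabla p\cdot\nabla q\,\d x\d y\\
{}+ \alpha_f\alpha_p\gamma_f\gamma_p\int_0^L\bigl(f(1,y)-p(1,y)\bigr)\bigl(g(1,y)-q(1,y)\bigr)\,\d y .
\end{multline*}
Boundedness of $ \mathfrak{a}$ uses the continuity of the trace $\Hbc\to L^2\bigl(\{1\}\times(0,L)\bigr)$ — proved exactly as in the lemma asserting that $ {G}$ is bounded, with the face $x=1$ in place of $y=0$ — and coercivity holds because the two gradient terms control $\|\cdot\|_{\Hbc}^2$ through~\eqref{def-norm-Hbc} while the remaining two terms are nonnegative. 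Lax--Milgram then yields a unique $(f,p)\in\Hbc$ with $ \mathfrak{a}\bigl((f,p),(g,q)\bigr)=\Ps{(u,v)}{(g,q)}$ for all $(g,q)\in\Hbc$; testing against pairs smooth and compactly supported in $ \Omega$ shows $(\Delta f,\Delta p)\in[L^2( \Omega)]^2$, hence $(f,p)\in\mathcal{D}( {A}_0)$ and $(\Id- {A}_0)(f,p)=(u,v)$. Since $ {A}_0$ is densely defined and symmetric with $\ran(\Id- {A}_0)=[L^2( \Omega)]^2$, it is self-adjoint: given $y\in\mathcal{D}( {A}_0^*)$, pick $x\in\mathcal{D}( {A}_0)$ with $(\Id- {A}_0)x=(\Id- {A}_0^*)y$; as $ {A}_0\subseteq {A}_0^*$ we get $(\Id- {A}_0^*)(y-x)=0$, and $\Ker(\Id- {A}_0^*)=\ran(\Id- {A}_0)^\perp=\{0\}$, so $y=x\in\mathcal{D}( {A}_0)$.

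Finally, the coercivity estimate gives $\|(\Id- {A}_0)^{-1}(u,v)\|_{\Hbc}\le C\|(u,v)\|_{[L^2( \Omega)]^2}$, so $(\Id- {A}_0)^{-1}$ maps $[L^2( \Omega)]^2$ boundedly into $\Hbc$ and is therefore compact by Proposition~\ref{proposition_embedding}. Being also self-adjoint, the spectral theorem for compact self-adjoint operators applied to $(\Id- {A}_0)^{-1}$ provides an orthonormal basis of $[L^2( \Omega)]^2$ consisting of eigenvectors of $ {A}_0$, with eigenvalues $ \lambda_n\to-\infty$; this is the claimed diagonalizability. I expect the one genuinely delicate point to be the boundary bookkeeping in the first integration by parts — checking that the interface terms at $x=1$ recombine into a symmetric expression exactly because of the weighting in~\eqref{ps-L2} (this is also the place where self-adjointness breaks down for $ {A}= {A}_0+ {B}_0$ once the convection is added back) — with the trace bound needed for $ \mathfrak{a}$ a secondary technicality.
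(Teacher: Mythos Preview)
Your proof is correct and follows essentially the same route as the paper: integration by parts exploiting the weighted inner product~\eqref{ps-L2} to obtain symmetry, a variational range argument (you use Lax--Milgram for $\Id- {A}_0$, whereas the paper applies Riesz directly to $ {A}_0$ via the $\Hbc$ inner product~\eqref{def-norm-Hbc}), and then diagonalizability from the compact embedding of Proposition~\ref{proposition_embedding}. The one noteworthy difference is that your bilinear form $\mathfrak a$ carries the interface trace term at $x=1$ explicitly, which is the standard way to encode the Robin coupling variationally; the paper's form omits it and relies instead on the definition of $\Hbc$ as the closure of $\mathbf{E}$.
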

\begin{proof}
Take $(f,p)$ and $(g,q)$ in $\mathcal{D}( {A}_0)$; integrating two times by parts, we obtain 
\begin{align}
\notag
\int_ \Omega \diffp[2]{f}{x}(x,y)g(x,y)\d x \d y 
& \begin{multlined}[t]
=\int_0^L\diffp{f}{x}(1,y)\,g(1,y)\d y - \int_0^Lf(1,y)\,\diffp{g}{x}(1,y)\,\d y\\[0.5ex]
  +\int_ \Omega f(x,y)\, \diffp[2]{g}{x}(x,y)\,\d x\d y
\end{multlined}\\
\label{calcul-ps1}
&\begin{multlined}[b]
=\gamma_f\int_0^L\bigl(g(1,y)p(1,y) - f(1,y)q(1,y)\bigr)\,\d y \\
 +\int_ \Omega f(x,y)\diffp[2]{g}{x}(x,y)\,\d x\d y\,,
 \end{multlined}
\end{align}
and 
\begin{align}
\label{calcul-ps2}
\int_ \Omega \diffp[2]{f}{y}(x,y)g(x,y)\d x \d y & = \int_ \Omega f(x,y)\diffp[2]{g}{y}(x,y)\,\d x\d y\,.
\end{align}
Analogous computations lead to
\begin{align}
\label{calcul-ps3}
\int_ \Omega \Delta p(x,y)\,q(x,y)\d x \d y  
&\begin{multlined}[t]
=
\gamma_p\int_0^L\bigl(f(1,y)q(1,y) - g(1,y)p(1,y)  \bigr)\,\d y \\
 +\int_ \Omega p(x,y)\, \Delta q(x,y)\,\d x\d y\,.
\end{multlined}
\end{align}
So we have
\begin{align*}
\ps{ {A}_0(f,p)}{(g,q)} & = 
\begin{multlined}[t]
 \alpha_p \gamma_p \alpha_f \gamma_f\int_0^L\bigl(g(1,y)p(1,y) - f(1,y)q(1,y)\bigr)\,\d y \\
 + \alpha_p \gamma_p \alpha_f\int_ \Omega f(x,y)\, \Delta g(x,y)\,\d x\d y\\
+ \alpha_f \gamma_f \alpha_p \gamma_p\int_0^L\bigl(f(1,y)q(1,y) - g(1,y)p(1,y)  \bigr)\,\d y \\
+ \alpha_f \gamma_f \alpha_p\int_ \Omega p(x,y)\, \Delta q(x,y)\,\d x\d y
 \end{multlined}\\
&= 
\begin{multlined}[t]
\alpha_p \gamma_p\!\!\int_ \Omega f(x,y)\,\bigl( \alpha_f \Delta g(x,y)\bigr)\,\d x\d y \\
+ \alpha_f \gamma_f\!\! \int_ \Omega p(x,y)\,\bigl(\alpha_p \Delta q(x,y)\bigr)\,\d x\d y
\end{multlined}\\
= &\Ps{(f,p)}{ {A}_0(g,q)}\,,
\end{align*}
which shows that the operator  $ {A}_{0}$ is symmetric. 
We shall now show that $ {A}_{0}$ is self-adjoint; to do this, it is enough to prove that $ {A}_{0}$ is onto, the proof is almost the same as in~\cite[Proposition 3.2.4 ]{TucWeiss}.

Take $(u,v) \in \bigl[L^{2}(\Omega)\bigr]^{2}$,  we have to prove the existence of  
$(f,p)$ in $ \mathcal{D}( {A}_{0})$ such that 
\[
 {A}_{0}\left(f,p\right)=\left(u,v\right).
\]
First notice that the mapping 
\[
(g,q)	\longrightarrow \int_{\Omega}u\,g +\int_{\Omega}v\,q
\]
 is a bounded linear functional on $\Hbc$. By the Riesz representation theorem, there exists $(f,p)\in\Hbc$ such that
\[
\Ps{(f,p)}{(g,q)}_{\Hbc}=\Ps{(u,v)}{(g,q)}_{\left[L^{2}(\Omega)\right]^{2}}\,;
\]
the inner product in the left-hand member of this equality being the one related to the norm defined by~\eqref{def-norm-Hbc}.
Denoting by $\mathcal{D}( \Omega)$ the space of smooth functions with compact support in $ \Omega$,  we notice  that, as $\mathcal{D}( \Omega)\times\mathcal{D}( \Omega)\subset\Hbc$, the above equality can also be written for any $g,q\in\mathcal{D}( \Omega)$; so, in the sense of distributions,  we have
\begin{equation*}
\begin{aligned}
\Ps{(f,p)}{(g,q)}_{\Hbc} =&\alpha_f\int_ \Omega(\nabla f)\cdot(\nabla g) \,\d x\d y+ \alpha_p\int_ \Omega(\nabla p)\cdot(\nabla q)\,\d x\d y \\=&
- \alpha_f\ps{ \Delta f}{ g}_{\mathcal{D}',\mathcal{D}} - \alpha_p\ps{ \Delta p}{ q}_{\mathcal{D}',\mathcal{D}}\,.
\end{aligned}
\end{equation*}
This equality is true for every $(g,q)\in[\mathcal{D}( \Omega)]^2$, so we have
\begin{align*}
 -\alpha_{f}\Delta f&=u,  \text{ in $ \mathcal{D}^{'}(\Omega)$}, &  -\alpha_{p}\Delta p&=v, \text{ in $ \mathcal{D}^{'}(\Omega)$}.
\end{align*}
Since, $(u,v) \in \left[L^{2}(\Omega)\right]^{2}$, we obtain that 
\begin{align*}
\alpha_{f}\Delta f& \in L^{2}\left(\Omega\right),&\alpha_{p}\Delta p&\in L^{2}\left(\Omega\right). 
\end{align*}
Thus $\left(f,p\right) \in \mathcal{D}( {A}_{0})$ and 
\[
 {A}_{0}\left(f,p\right)=\left(u,v\right),
\]
hence $ {A}_{0}$ is onto and  we can conclude that $ {A}_{0}$ is a self adjoint.

Finally, according to Proposition \ref{proposition_embedding}, the embedding  $ {J}:\Hbc \hookrightarrow {[L^{2}(\Omega)]}^2$ is compact, therefore $  {A}^{-1}_{0}= {J}\circ  {A}_0^{-1}$ is compact and hence, by Proposition \cite[Theorem 3.2.12]{TucWeiss}, $ {A}_{0}$ is diagonalizable with an orthonormal basis $(\varphi_{k},\psi_k)$ of eigenvectors and the corresponding sequence of eigenvalues $\left(\lambda_{k}\right)$ satisfies and $\lim_{|k|\to\infty} \lambda_{k}= \infty$.
\end{proof}

\subsection{The  operator $ {A}$}
In this section, we shall prove that $ {A}$ is m-dissipative with respect to the inner product defined in~\eqref{ps-L2}; the proof is in the same spirit  as the proof of~\cite[Theorem 3.2]{pazysemigroups}.
We shall see first that $ {A}$ is dissipative.
\begin{proposition}
\label{prop-dissip-1}
For every $t\in[0,1]$, the operator $ {A}_{0}+ t {B}_0$ is  dissipative; moreover, the operator $ {A}_0$ is m-dissipative.
\end{proposition}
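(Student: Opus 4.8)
\emph{Overview.} The plan is to reduce both assertions to two integration-by-parts identities together with a one-line functional-analytic argument, exactly mirroring the treatment of the Laplacian.

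\emph{Dissipativity of $A_0+tB_0$.} In the real space $\bigl[L^2(\Omega)\bigr]^2$ equipped with the inner product~\eqref{ps-L2}, it suffices to prove $\ps{A_0(f,p)}{(f,p)}\le 0$ and $\ps{B_0(f,p)}{(f,p)}\le 0$ for $(f,p)\in\mathcal D(A_0)$, since then linearity gives $\ps{(A_0+tB_0)(f,p)}{(f,p)}\le 0$ for every $t\ge 0$, in particular on $[0,1]$. For $A_0$, I would integrate by parts once in each variable: using $\p x f(0,y)=0$, $f(x,0)=0$, $\p y f(x,L)=0$ and the interface condition $\p x f(1,y)=-\gamma_f(f(1,y)-p(1,y))$ one gets $\int_\Omega f\,\Delta f=-\gamma_f\int_0^L f(1,y)(f(1,y)-p(1,y))\,\d y-\|\nabla f\|_{L^2(\Omega)}^2$, and likewise, with $\p x p(0,y)=0$, $p(x,L)=0$, $\p y p(x,0)=0$, $\p x p(1,y)=\gamma_p(f(1,y)-p(1,y))$, $\int_\Omega p\,\Delta p=\gamma_p\int_0^L p(1,y)(f(1,y)-p(1,y))\,\d y-\|\nabla p\|_{L^2(\Omega)}^2$. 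Multiplying the first identity by $\alpha_f\alpha_p\gamma_p$ and the second by $\alpha_f\alpha_p\gamma_f$ and adding — the crossed weights $\alpha_p\gamma_p$ and $\alpha_f\gamma_f$ in~\eqref{ps-L2} are chosen precisely so that the two interface ($I_f$, $I_p$) contributions combine into a single square — yields
\[
\ps{A_0(f,p)}{(f,p)}=-\alpha_f\alpha_p\gamma_f\gamma_p\!\int_0^L\!\bigl(f(1,y)-p(1,y)\bigr)^2\d y-\alpha_f\alpha_p\gamma_p\|\nabla f\|_{L^2(\Omega)}^2-\alpha_f\alpha_p\gamma_f\|\nabla p\|_{L^2(\Omega)}^2\le 0 .
\]
For $B_0$, I would write $f\,\p y f=\tfrac12\p y(f^2)$ and $p\,\p y p=\tfrac12\p y(p^2)$ and integrate first in $y$; the conditions $f(x,0)=0$ and $p(x,L)=0$ remove the inlet traces and leave
\[
\ps{B_0(f,p)}{(f,p)}=-\tfrac12\,\alpha_p\gamma_p\bf\!\int_0^1 f^2(x,L)\,\d x-\tfrac12\,\alpha_f\gamma_f\bp\!\int_0^1 p^2(x,0)\,\d x\le 0 ,
\]
since $\bf>0$ and $\bp>0$ in the counter-current system~\eqref{glv1.1}. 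This proves the first assertion.

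\emph{m-dissipativity of $A_0$.} Dissipativity is the case $t=0$ above, so only maximality remains, i.e. $\ran(\lambda\Id-A_0)=\bigl[L^2(\Omega)\bigr]^2$ for one $\lambda>0$ (hence for all). The shortest route is to invoke the theorem just proved: $A_0$ is self-adjoint and, being dissipative, has spectrum contained in $(-\infty,0]$, so $\lambda\Id-A_0$ is boundedly invertible for every $\lambda>0$; in particular $\ran(\Id-A_0)=\bigl[L^2(\Omega)\bigr]^2$. Equivalently, one may repeat verbatim the Riesz-representation argument used to prove that $A_0$ is onto, now applied to the natural coercive symmetric bilinear form associated with $\Id-A_0$ on $\Hbc$ (coercivity coming from Poincaré together with the non-negative interface term), which directly produces, for given $(u,v)\in\bigl[L^2(\Omega)\bigr]^2$, a pair $(f,p)\in\mathcal D(A_0)$ with $(\Id-A_0)(f,p)=(u,v)$. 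Either way $A_0$ is m-dissipative.

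\emph{Main obstacle.} The algebra is routine once it is legitimate; the only genuine point requiring care is justifying the boundary integrals — that an element of $\mathcal D(A_0)$, a priori only in $\Hbc$ with $L^2$ Laplacian, actually lies in $H^2(\Omega)$, so that its normal traces on the edges belong to $L^2$ and the Robin conditions at $x=1$ hold in the trace sense. This relies on the Faierman-type elliptic regularity already quoted in the paper. The remaining, purely bookkeeping, point is tracking the weights so that the $I_f$ and $I_p$ contributions telescope into the single non-positive square displayed above (which, incidentally, also supplies the gradient bound used in Section~\ref{sec-exist-unic}).
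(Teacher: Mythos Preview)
Your proof is correct and follows essentially the same path as the paper: the same two integration-by-parts identities yielding the displayed formulas for $\ps{A_0(f,p)}{(f,p)}$ and $\ps{B_0(f,p)}{(f,p)}$, followed by the observation that self-adjointness of $A_0$ (established in the preceding theorem) together with dissipativity gives m-dissipativity. Your remark on needing $H^2$ regularity to justify the trace terms at $x=1$ is a valid caveat that the paper's own proof leaves implicit.
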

\begin{proof}
Take $(f,p)\in\mathcal{D}( {A}_0)$, we compute first $\Ps{ {A}_0(f,p)}{(f,p)}$: integrating by parts, we get
\begin{align*}
\int _ \Omega \diffp[2]{f}{x}f &= \int_0^L\left[\diffp{f}{x}(x,y)\,f(x,y)\right]_{x=0}^{x=1}\,\d y
 - \int_ \Omega \left(\diffp{f}{x}\right)^2 \negthickspace\d x\d y\\
& = -\gamma_f \int_0^Lf(1,y)\bigl(f(1,y)-p(1,y)\bigr)\,\d y  - \int_ \Omega \left(\diffp{f}{x}\right)^2\negthickspace\d x\d y\,.
\end{align*}
On the other hand
\begin{align*}
\int _ \Omega \diffp[2]{f}{y}f &= \int_0^1\left[\diffp{f}{y}(x,y)\,f(x,y)\right]_{y=0}^{y=1}\d x - \int_ \Omega \left(\diffp{f}{y}\right)^2\negthickspace\d x\d y\\
& = - \int_ \Omega \left(\diffp{f}{x}\right)^2\negthickspace\d x\d y\,.
\end{align*}
An analogous computation leads to
\begin{align*}
\int_ \Omega (\Delta  p)p\, \d x\d y&= \gamma_p\int_0^Lp(1,y)\bigl(f(1,y)-p(1,y)\bigr)\d y  - \int_ \Omega \vert\nabla p\vert^2\d x\d y
\end{align*}
So, we have
\begin{align}
\notag
\Ps{ {A}_0(f,p)}{(f,p)}_{{[L^2( \Omega)]}^2} & =\alpha_p \gamma_p \alpha_f \int_ \Omega ( \Delta f)f\,\d x\d y
 + \alpha_f \gamma_f \alpha_p\int_  \Omega ( \Delta p)p\,\d x\d y\\
  \notag &
   \begin{multlined}[t]
   =
 - \alpha_p \gamma_p \alpha_f\gamma_f \int_0^Lf(1,y)\bigl(f(1,y)-p(1,y)\bigr)\,\d y\\
 -  \alpha_p \gamma_p \alpha_f\int_ \Omega|\nabla f|^2\,\d x\d y\\
  + \alpha_f \gamma_f \alpha_p \gamma_p\int_0^Lp(1,y)\bigl(f(1,y)-p(1,y)\bigr)\,\d y\\ 
   -  \alpha_f \gamma_f \alpha_p\int_ \Omega|\nabla p|^2\,\d x\d y
   \end{multlined}\\
   &
   \begin{multlined}[t]
   \label{dissp-A0}
   =  - \alpha_p \gamma_p \alpha_f\gamma_f \!\!\int_0^L\!\!\!\!\left(f(1,y)-p(1,y)\right)^2\d y\\[0.75em]
   - \alpha_f \alpha_p\!\!\int_ \Omega\bigl( \gamma_p|\nabla f|^2+ \varphi_f|\nabla p|^2\bigr)\d x\d y
   \end{multlined}
\end{align}
On the other  hand 
\begin{align}
\notag
\ps{ {B}_0(f,p)}{(f,p)} &= - \alpha_p \gamma_p \beta_f\int_ \Omega (\partial_y f)f\,\d x\d y+
 \alpha_f \gamma_f \beta_p\int_ \Omega (\partial_y p)p\,\d x\d y\\
\notag &
\begin{multlined}[t]
=  -\frac{\alpha_p \gamma_p \beta_f}2\,\int_0^1\bigl(f^2(x,1)- f^2(x,0)\bigr)\,\d x \\
+\frac{  \alpha_g \gamma_f \beta_p}2\,\int_0^1\bigl(p^2(x,1)- p^2(x,0)\bigr)\,\d x
\end{multlined}\\
\label{dissip-B0}
&= -\frac{ \alpha_p \gamma_p \beta_f}2\,\int_0^1f^2(x,1)\,\d x -\frac{ \alpha_f \gamma_f \beta_p}2\,\int_0^1 p^2(x,0)\,\d x\,.
\end{align}
Inequalities~\eqref{dissp-A0} and~\eqref{dissip-B0} prove that $ {A}_0+t {B}_0$ is dissipative for every $t\in[0,1]$. We have also seen that $ {A}_0$ is onto, which proves that   $ {A}_0$ is self-adjoint, as $ {A}_0$ is dissipative, we can conclude that 
$ {A}_0$ is m-dissipative. 
 \end{proof}
 Now, as in the proof of the previous theorem, we shall prove  that there exists $ \delta>0$ such that, if 
 $ {A}_0+t_0 {B}_0 $ (here $t_0\in[0,1]$) is m-dissipative, then $ {A}_0+t  {B}_0$ is also m-dissipative for every $t\in[0,1]$ such that $ |t-t_0|\le \delta$. To this end, we need the following proposition.
 \begin{proposition}
 \label{prop-dissip-2}
 If the operator $ {A}_0+t_0 {B}_0$ ($ t_0\in[0,1]$) is m-dissipative , then the operator
  $ {B}_0 \bigl(\Id - ( {A}_0+t_0 {B}_0)\bigr)^{-1}$ is bounded, the bound being independent from $t_0$\,.
 \end{proposition}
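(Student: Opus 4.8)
The plan is to exploit the fact that $B_{0}$ is a first-order operator involving only $\partial_{y}$, hence controlled by the full Dirichlet seminorm $\|\nabla\cdot\|_{L^{2}( \Omega)}$, and that this seminorm is in turn controlled by the resolvent of $A_{0}+t_{0}B_{0}$ through the coercivity estimate~\eqref{dissp-A0}. The decisive point is that the coercivity constant in~\eqref{dissp-A0} carries no $t_{0}$, and that the $B_{0}$-contribution to the energy identity has the favourable sign by~\eqref{dissip-B0}, so the resulting bound will be uniform over $t_{0}\in[0,1]$. Throughout, $\|(f,p)\|$ will denote the norm of the inner product~\eqref{ps-L2}, that is $\|(f,p)\|^{2}=\alpha_{p}\gamma_{p}\|f\|_{L^{2}( \Omega)}^{2}+\alpha_{f}\gamma_{f}\|p\|_{L^{2}( \Omega)}^{2}$.

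Concretely, I would fix $t_{0}\in[0,1]$ and write $S_{t_{0}}:=A_{0}+t_{0}B_{0}$, whose domain is $\mathcal{D}(A_{0})$ since $\mathcal{D}(B_{0})=\mathcal{D}(A_{0})$. By hypothesis $S_{t_{0}}$ is m-dissipative, so $\Id-S_{t_{0}}$ is a bijection of $[L^{2}( \Omega)]^{2}$ with $\|(\Id-S_{t_{0}})^{-1}\|\le1$. Given $(u,v)\in[L^{2}( \Omega)]^{2}$, set $(f,p):=(\Id-S_{t_{0}})^{-1}(u,v)\in\mathcal{D}(A_{0})$, so that $(f,p)-A_{0}(f,p)-t_{0}B_{0}(f,p)=(u,v)$. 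Pairing this identity with $(f,p)$ for the inner product~\eqref{ps-L2} gives
\[
\ps{(u,v)}{(f,p)}=\|(f,p)\|^{2}-\ps{A_{0}(f,p)}{(f,p)}-t_{0}\,\ps{B_{0}(f,p)}{(f,p)}.
\]
By~\eqref{dissp-A0} one has $-\ps{A_{0}(f,p)}{(f,p)}\ge c_{1}\bigl(\|\nabla f\|_{L^{2}( \Omega)}^{2}+\|\nabla p\|_{L^{2}( \Omega)}^{2}\bigr)$ with $c_{1}:=\alpha_{f}\alpha_{p}\min(\gamma_{f},\gamma_{p})>0$, and by~\eqref{dissip-B0} one has $-t_{0}\,\ps{B_{0}(f,p)}{(f,p)}\ge0$ for $t_{0}\ge0$; hence, using Cauchy--Schwarz on the left,
\[
\|(f,p)\|^{2}+c_{1}\bigl(\|\nabla f\|_{L^{2}( \Omega)}^{2}+\|\nabla p\|_{L^{2}( \Omega)}^{2}\bigr)\le\|(u,v)\|\,\|(f,p)\|.
\]
The first term alone forces $\|(f,p)\|\le\|(u,v)\|$, and feeding this back in yields $\|\nabla f\|_{L^{2}( \Omega)}^{2}+\|\nabla p\|_{L^{2}( \Omega)}^{2}\le c_{1}^{-1}\|(u,v)\|^{2}$.

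It then remains to read off the bound on $B_{0}(f,p)$. Since $B_{0}(f,p)=(-\beta_{f}\partial_{y}f,\beta_{p}\partial_{y}p)$ and $\|\partial_{y}f\|_{L^{2}( \Omega)}\le\|\nabla f\|_{L^{2}( \Omega)}$ (likewise for $p$), the definition~\eqref{ps-L2} gives
\[
\|B_{0}(f,p)\|^{2}=\alpha_{p}\gamma_{p}\beta_{f}^{2}\|\partial_{y}f\|_{L^{2}( \Omega)}^{2}+\alpha_{f}\gamma_{f}\beta_{p}^{2}\|\partial_{y}p\|_{L^{2}( \Omega)}^{2}\le\frac{\max\bigl(\alpha_{p}\gamma_{p}\beta_{f}^{2},\alpha_{f}\gamma_{f}\beta_{p}^{2}\bigr)}{c_{1}}\,\|(u,v)\|^{2}.
\]
Since $(u,v)$ was arbitrary, this shows $B_{0}(\Id-S_{t_{0}})^{-1}$ is bounded with norm at most $\bigl(\max(\alpha_{p}\gamma_{p}\beta_{f}^{2},\alpha_{f}\gamma_{f}\beta_{p}^{2})/c_{1}\bigr)^{1/2}$, an expression involving only the fixed coefficients $\alpha_{f},\alpha_{p},\beta_{f},\beta_{p},\gamma_{f},\gamma_{p}$ and not $t_{0}$, which is precisely the assertion. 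I do not expect a genuine obstacle here: this is the standard relative-boundedness step underlying the method of continuity for m-dissipativity, and the only thing that really needs checking is that each of the three ingredients --- the coercivity constant $c_{1}$ in~\eqref{dissp-A0}, the sign in~\eqref{dissip-B0}, and the contraction bound $\|(\Id-S_{t_{0}})^{-1}\|\le1$ --- is manifestly independent of $t_{0}$.
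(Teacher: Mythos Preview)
Your argument is correct. It is in spirit the same energy estimate as the paper's, but you organize it more efficiently: you take the inner product of the full resolvent identity $(\Id-A_0-t_0B_0)(f,p)=(u,v)$ with $(f,p)$ at once, read off the gradient bound from the coercivity~\eqref{dissp-A0} and the sign~\eqref{dissip-B0}, and then simply observe that $B_0$ is dominated by the gradient. The paper instead works componentwise: it integrates $\int_\Omega(\partial_y u)^2$ by parts, substitutes the equation~\eqref{rel-(f-p)-(u-v)} to eliminate $\partial_{yy}u$, handles the resulting $\partial_{xx}u$ term via another integration by parts that produces the interface integral, and only then combines the $f$- and $p$-contributions so that the interface terms cancel into a nonpositive square. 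Your route avoids this second round of integration by parts and the explicit tracking of interface terms, because that cancellation is already packaged inside~\eqref{dissp-A0}. The paper's computation yields a slightly sharper explicit constant (essentially $M'=\max\bigl(M/(2\alpha_f^{2}),M/(2\alpha_p^{2})\bigr)$ with $M=\max(\alpha_p\beta_f^{2},\alpha_f\beta_p^{2})$), but for the sole purpose at hand --- a $t_0$-independent bound feeding the continuity argument of Theorem~\ref{A-m-dissipative} --- your constant is equally good. One minor remark: you swap the roles of $(f,p)$ and $(u,v)$ relative to the paper's notation, which is harmless but worth flagging if you later merge with the surrounding text.
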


\begin{proof}
Notice that if $ {A}_0+t_0 {B}_0$  is m-dissipative, $\Id - ( {A}_0+t_0 {B}_0)$ is invertible. We denote the inverse 
$\bigl(\Id - ( {A}_0+t_0 {B}_0)\bigr)^{-1}$ by $R(t_0)$ and we take $(f,p)$ in ${[L^2( \Omega)]}^2$. We seek for an upper bound for $\| {B}_0R(t_0)(f,p)\|$. Let $(u,v)=R(t_0)(f,p)$, so that we have
\begin{align}
\label{rel-(f-p)-(u-v)}
f & = u- \alpha_f \Delta u +t_0 \beta_f\,\diffp{ u}{y} , & p & = v - \alpha_p \Delta v -t_0 \beta_p\,\diffp{ v}{y}\,.
\end{align}
Now
\begin{align}
\notag
\| {B}_0R(t_0)(f,p)\|^2 & = \| {B}_0(u,v)\|^2\\
\notag
& = \alpha_p \gamma_p\int_ \Omega \beta_f^2\left(\diffp{ u}{ y}\right)^2\negthickspace \d x\d y
+ \alpha_f \gamma_f\int_ \Omega \beta_p^2\left(\diffp{ p}{ y}\right)^2\negthickspace\d x\d y\\[0.75ex]
\label{prop3-ineg0}
&\le M\left(  \gamma_p\int_ \Omega \left(\diffp{ u}{ y}\right)^2\negthickspace\d x\d y +  \gamma_f\int_ \Omega \left(\diffp{ p}{ y}\right)^2\negthickspace\d x\d y\right)
\end{align}
with $M=\max( \alpha_p \beta_f^2, \alpha_f \beta_p^2)$. 

We shall  rewrite these two integrals; first we have
\begin{align}
\notag
\int_ \Omega \left(\diffp{ u}{y}\right)^2\negthickspace  \d x\d y& = \int_0^1\left[u(x,y)\,\diffp{u}{y}(x,y) \right]_{y=0}^{y=L}\d x 
- \int_ \Omega u\,\diffp[2]{u}{y}\,\d x\d y\\[1ex]
\notag
& = - \int_ \Omega u\,\diffp[2]{u}{y}\,\d x\d y\\[1ex]
\label{prop3-eg1}
& = -\frac1{ \alpha_f}\int_ \Omega u\left(u - \alpha_f\diffp[2]{u}{x} + t_0 \beta_f\diffp{u}{y} - f\right)\d x\d y &\text{from~\eqref{rel-(f-p)-(u-v)}}\,.
\end{align}
Now, we have
\begin{align}
\notag
\int_ \Omega u\diffp[2]{u}{x} \,\d x\d y&  = \int_0^L\left[ u(x,y)\diffp{u}{x}(x,y)\right]_{x=0}^{x=1}\d y - \int_ \Omega\left( \diffp{u}{x}\right)^2 \d x \d y\\
\label{prop3-eg2}
& = - \gamma_f\int_0^L u(1,y)\bigl(u(1,y)-v(1,y)\bigr)\,\d y - \int_ \Omega\left( \diffp{u}{x}\right)^2\negthickspace\d x\d y\,;
\end{align}
on the other hand
\begin{align}
\label{prop3-eg3}
\int_ \Omega u\diffp{u}{y}\,\d x\d y & = \frac12\,\int_0^1u^2(x,1)\,\d x\,.
\end{align}
Substituting equalities~\eqref{prop3-eg2} and~\eqref{prop3-eg3}  into~\eqref{prop3-eg1}, we get
\begin{equation}  \label{prop3-ineg1}
\begin{aligned}
\int_ \Omega \left(\diffp{ u}{y}\right)^2\negthickspace\d x\d y=
&-\frac1{ \alpha_f}\,\int_ \Omega u^2 \d x\d y
- \gamma_f\int_0^L u(1,y)\bigl(u(1,y)-v(1,y)\bigr)\,\d y \\
 & - \int_ \Omega\left( \diffp{u}{x}\right)^2\negthickspace \d x\d y- \frac{t_0 \beta_f}{2 \alpha_f}\int u^2(x,L)\,\d x +\frac1{ \alpha_f}\,\int_ \Omega uf\, \d x\d y\\
\le&-\frac1{ \alpha_f}\,\int_ \Omega u^2 \d x\d y- \gamma_f\int_0^L u(1,y)\bigl(u(1,y)-v(1,y)\bigr)\,\d y\\  
  &+\frac1{ \alpha_f}\,\int_ \Omega u\,f \d x\d y\\
 \le&-\frac1{ \alpha_f}\,\int_ \Omega u^2 \d x\d y- \gamma_f\int_0^L u(1,y)\bigl(u(1,y)-v(1,y)\bigr)\,\d y \\
 &+\frac1{ 2\alpha_f}\,\int_ \Omega (u^2 +f^2) \d x\d y\\
 \le &-\gamma_f\int_0^L u(1,y)\bigl(u(1,y)-v(1,y)\bigr)\,\d y + \frac{1}{2 \alpha_f}\int_ \Omega f^2 \d x\d y\,.
\end{aligned}
\end{equation}
An analogous computation shows that
\begin{align}
\label{prop3-ineg2}
\int_ \Omega \left(\diffp{ v}{y}\right)^2 \negthickspace\d x\d y
&\le 
  \gamma_p\int_0^L v(1,y)\bigl(u(1,y)-v(1,y)\bigr)\,\d y  
  +\frac1{2\alpha_p}\,\int_ \Omega p^2 \d x\d y \,.
\end{align}
Substituting~\eqref{prop3-ineg1} and~\eqref{prop3-ineg2} into~\eqref{prop3-ineg0}, we get
\begin{align*}
\|  {B}_0&R(t_0)(f,p)\|^2 \le 
M\gamma_p\left(  - \gamma_f\int_0^L u(1,y)\bigl(u(1,y)-v(1,y)\bigr)\,\d y + \frac{1}{2 \alpha_f}\int_ \Omega f^2 \d x\d y \right) \\
&+ M\gamma_f\left( \gamma_p\int_0^L v(1,y)\bigl(u(1,y)-v(1,y)\bigr)\,\d y  +\frac1{2\alpha_p}\,\int_ \Omega p^2 \d x\d y\right)\\
&= -M \gamma_f \gamma_p\int_0^L\bigl(u(1,y)-v(1,y)\bigr)^2\d y + 
M\left( \frac{\gamma_p}{2 \alpha_f}\int_ \Omega f^2 \d x\d y+ \frac{ \gamma_f}{2 \alpha_p}\,\int_ \Omega p^2 \d x\d y\right)\\
& \le M'\|(f,p)\|^2_{[L^2( \Omega)]^2},
\end{align*}
where $M' =\max \biggl(\dfrac M{2\alpha_f^2}, \dfrac M{2 \alpha_p^2}\biggr)$. 

We have proved that the operator $ {B}_0R(t_0)$ is bounded, moreover its norm is less than or equal to $\sqrt{M'}$, which is a bound independent from $t_0$.
\end{proof}
We are now ready to prove the  main result of this section.
\begin{thm}\label{A-m-dissipative}
Operator $ {A}$ is m-dissipative with respect to the inner product~\eqref{ps-L2}.
\end{thm}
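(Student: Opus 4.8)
The plan is to use the classical method of continuity, exactly as the structure of Propositions~\ref{prop-dissip-1} and~\ref{prop-dissip-2} suggests: one connects $A_0$, which is already known to be m-dissipative, to $A = A_0 + B_0$ along the family $A_0 + tB_0$, $t\in[0,1]$, and propagates m-dissipativity along this path in steps whose length does not shrink.

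First I would recall that, by Proposition~\ref{prop-dissip-1} applied with $t=1$, the operator $A = A_0 + B_0$ is dissipative, and that its domain $\mathcal{D}(A) = \mathcal{D}(A_0)$ is dense in $\mathbf{X} = [L^2(\Omega)]^2$ (as established in the section on $A_0$). By the Lumer--Phillips characterization it therefore suffices to prove that $\ran\bigl(\Id - A\bigr) = \mathbf{X}$. To organize the continuation argument, set
\[
\Lambda = \bigl\{\, t\in[0,1] \;:\; A_0 + tB_0 \text{ is m-dissipative}\,\bigr\}.
\]
Since $A_0$ is m-dissipative, $0\in\Lambda$, so $\Lambda\neq\emptyset$; the goal is to show $1\in\Lambda$.

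Second, I would show that $\Lambda$ contains a fixed-radius neighbourhood (in $[0,1]$) of each of its points. Fix $t_0\in\Lambda$ and write $R(t_0) = \bigl(\Id - (A_0+t_0B_0)\bigr)^{-1}\in\mathcal{L}(\mathbf{X},\mathcal{D}(A_0))$. By Proposition~\ref{prop-dissip-2} we have $\|B_0R(t_0)\|\le \sqrt{M'}=:K$ with $K$ independent of $t_0$. For $t\in[0,1]$ with $|t-t_0|\le\delta:=1/(2K)$ the algebraic identity
\[
\Id - (A_0 + tB_0) = \bigl(\Id - (t-t_0)\,B_0R(t_0)\bigr)\bigl(\Id - (A_0+t_0B_0)\bigr)
\]
holds on $\mathcal{D}(A_0)$ (expand the right-hand side and use $B_0R(t_0)\bigl(\Id - (A_0+t_0B_0)\bigr) = B_0$). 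Since $\|(t-t_0)B_0R(t_0)\|\le 1/2<1$, the first factor is boundedly invertible on $\mathbf{X}$ by the Neumann series, while $\Id - (A_0+t_0B_0)\colon\mathcal{D}(A_0)\to\mathbf{X}$ is a bijection; hence $\Id - (A_0+tB_0)\colon\mathcal{D}(A_0)\to\mathbf{X}$ is a bijection, in particular onto. Combined with the dissipativity of $A_0+tB_0$ from Proposition~\ref{prop-dissip-1}, this yields $t\in\Lambda$.

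Finally, since the step length $\delta$ does not depend on the base point $t_0$, starting from $0\in\Lambda$ and advancing by $\delta$ at a time we obtain $\delta, 2\delta,\dots\in\Lambda$, and after at most $\lceil 1/\delta\rceil$ steps we reach $1\in\Lambda$. Thus $A = A_0+B_0$ is m-dissipative with respect to the inner product~\eqref{ps-L2}. The one genuinely delicate ingredient is the uniform bound $\|B_0R(t_0)\|\le K$, but that is precisely the content of Proposition~\ref{prop-dissip-2}; within the present proof the only point requiring care is that the factorization identity and the invertibility argument are carried out at the level of the fixed domain $\mathcal{D}(A_0)$, so that "onto" indeed means onto all of $\mathbf{X}$.
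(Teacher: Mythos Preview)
Your proof is correct and follows essentially the same approach as the paper: both use the method of continuity along the family $A_0+tB_0$, the factorization of $\Id-(A_0+tB_0)$ through $\Id-(A_0+t_0B_0)$, a Neumann-series argument based on the uniform bound from Proposition~\ref{prop-dissip-2}, and a finite-step march from $t=0$ to $t=1$. Your version is slightly more explicit (introducing the set $\Lambda$, invoking Lumer--Phillips by name, and checking the factorization on $\mathcal{D}(A_0)$), and your sign in the factorization identity is in fact the correct one.
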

\begin{proof}
Assume that $ {A}_0+t_0 {B}_0$ is m-dissipative (with $t_0\in[0,1]$), a simple computation shows that we can write
\begin{align}
\notag
\Id -( {A}_0+t {B}_0) & = \Id -( {A}_0+t_0 {B}_0) +(t_0-t) {B}_0\\
\label{A0+tB0}
& = \bigl(\Id+(t-t_0) {B}_0R(t_0)\bigr) \bigl(\Id - ( {A}_0+t_0 {B}_0)\bigr)\,.
\end{align}
The bounded operator $\Id+(t-t_0) {B}_0R(t_0)$ is invertible if $\|(t-t_0) {B}_0R(t_0)\|<1$, and this inequality is true if $|t-t_0|<1/\sqrt{M'}$. Thus, if $t$ satisfies this inequality, we can conclude from~\eqref{A0+tB0} that $\Id - ( {A}_0+t  {B}_0)$ is invertible. Moreover, as from Proposition~\ref{prop-dissip-1}, we know that 
$ {A}_0+t  {B}_0 $ is dissipative (for every $t\ge0$), we can conclude  that $ {A}_0+ t  {B}_0$ is m-dissipative for every $t\in[0,1]$ and such that 
$|t-t_0|\le 1/\sqrt{M'}$\,. Observe now that $ {A}_0+t_0  {B}_0$ is m-dissipative with $t_0=0$ (\emph{cf} Prop.~\ref{prop-dissip-1}), since any point of $[0,1]$ can be reached  from 0 by a finite number of step of length $1/\sqrt{M'}$, we conclude that 
$ {A}= {A}_0+ {B}_0$ is m-dissipative.
\end{proof}

\end{document}